\let\oldtocsection=\tocsection
\let\oldtocsubsection=\tocsubsection
\renewcommand{\tocsection}[2]{\hspace{0em}\oldtocsection{#1}{#2}}
\renewcommand{\tocsubsection}[2]{\hspace{1em}\oldtocsubsection{#1}{#2}}
\newtheorem{thm}{Theorem}[section]
\newtheorem{theorem}[thm]{Theorem}
\newtheorem{proposition}[thm]{Proposition}
\newtheorem{lemma}[thm]{Lemma}
\newtheorem{corollary}[thm]{Corollary}
\newtheorem{definition}[thm]{Definition}
\newtheorem{remark}[thm]{Remark}
\newtheorem*{thm*}{Theorem}
\newtheorem*{cor*}{Corollary}
\newtheorem*{prop*}{Proposition}
\newcommand{\e}{\mathbf{e}}
\newcommand{\x}{\mathbf{x}}
\newcommand{\y}{\mathbf{y}}
\renewcommand{\aa}{\mathbf{a}}
\newcommand{\bb}{\mathbf{b}}
\newcommand{\cc}{\mathbf{c}}
\newcommand{\T}{\mathcal{T}}
\newcommand{\Z}{\mathbb{Z}}
\newcommand{\A}{\mathcal{A}}
\newcommand{\B}{\mathcal{B}}
\newcommand{\R}{\mathbb{R}}
\newcommand{\C}{\mathbb{C}}
\newcommand{\D}{\mathcal{D}}
\newcommand{\Fuk}{\mathcal{F}uk}
\newcommand{\Aoo}{A_\infty}
\newcommand{\F}{\mathcal{F}}
\newcommand{\E}{\mathcal{E}}
\newcommand{\tC}{\text{C}}
\renewcommand{\P}{\mathbb{P}}
\renewcommand{\O}{\mathcal{O}}
\newcommand{\cP}{\mathcal{P}}
\newcommand{\cQ}{\mathcal{Q}}
\newcommand{\cR}{\mathcal{R}}
\newcommand{\Coh}{\text{Coh}}
\newcommand{\Ext}{\text{Ext}}
\newcommand{\Perf}{\text{Perf}}
\newcommand{\perf}{\text{perf}}
\newcommand{\twvect}{\text{tw vect}}
\newcommand{\ve}{\text{vect}}
\renewcommand{\H}{\mathbb{H}}
\newcommand{\vect}{\text{vect}}
\newcommand{\bdm}{\begin{displaymath}}
\newcommand{\edm}{\end{displaymath}}
\newcommand{\bq}{\begin{equation}}
\newcommand{\eq}{\end{equation}}
\numberwithin{equation}{section}
\title{Homological Mirror Symmetry for hypersurface cusp singularities}
\author{Ailsa Keating}
\thanks{Partially supported by NSF grant DMS-1505798 and by a Junior Fellow award from the Simons Foundation.}
\begin{document}

\begin{abstract}
We study versions of homological mirror symmetry for hypersurface cusp singularities and the three hypersurface simple elliptic singularities. 
We show that the Milnor fibres of each of these carries a distinguished Lefschetz fibration; its derived directed Fukaya category is equivalent to the derived category of coherent sheaves on a smooth rational surface $Y_{p,q,r}$. 
By using localization techniques on both sides, we get an isomorphism between the derived wrapped Fukaya category of the Milnor fibre and the derived category of coherent sheaves on a quasi-projective surface given by deleting an anti-canonical divisor $D$ from $Y_{p,q,r}$. 
In the cusp case, the pair $(Y_{p,q,r}, D)$ is naturally associated to the dual cusp singularity, tying into Gross, Hacking and Keel's proof of Looijenga's conjecture.
\end{abstract}

\maketitle

\tableofcontents

\section{Introduction}

A landmark application of the field of mirror symmetry is the recent proof by Gross, Hacking and Keel of Looijenga's conjecture, about pairs of cusp singularities \cite{GHK_logCY}. Cusp singularities come in naturally dual pairs;  in \cite{GHK_logCY}, this duality gets strengthened to a mirror symmetry statement, of the flavour developed by Gross--Siebert (e.g.~\cite{GS1, GS2, GS3}) and Kontsevich--Soibelman (e.g.~\cite{KS1, KS2}). In particular, all of the invariants involved in \cite{GHK_logCY} belong to the world of algebraic geometry. In this paper, we prove versions of Kontsevich's Homological Mirror Symmetry Conjecture \cite{Kontsevich_ICM} for spaces appearing in Gross, Hacking and Keel's work. 

We will consider Floer-theoretic invariants associated to the following singularities:\bq
T_{p,q,r}(x,y,z) = x^p + y^q +z^r + axyz
\eq
where  $(p, q, r)$ is a triple of positive integers with 
\bq \frac{1}{p} + \frac{1}{q} + \frac{1}{r} \leq 1.
\eq
Here $a$ is a constant which may take all but finitely many complex values, depending on $(p,q,r)$.
For each $a$, view $T_{p,q,r}$ as the germ of a holomorphic function near the origin: it is an isolated hypersurface singularity. 
We assume without loss of generality that $p \geq q \geq r$. 
Let $\mathcal{T}_{p,q,r}$ denote the Milnor fibre of $T_{p,q,r}$. This is a Liouville domain, which, as shown in \cite[Section 2]{Keating14}, is independent of choices, including the choice of representative for a germ and the constant $a$. 

In the classification of isolated hypersurface singularities by Arnol'd and collaborators, these are all but finitely many of the \emph{modality one} singularities \cite[Section I.2.3 and II.2.5]{Arnold_VI}; missing are the fourteen so-called `exceptional' singularities (known as the object of strange duality).  
In particular, from the perspective of this classification, these singularities are the next most sophisticated after the simple singularities, which are the modality zero ones: $A_n$, $D_n$, $E_6$, $E_7$ and $E_8$. In contrast, homological mirror symmetry for these is comparatively well understood -- see for instance \cite{Chan-Ueda, Abouzaid-Auroux-Katzarkov, Etgu-Lekili}.

\subsection{Motivation: candidate mirror spaces following Gross--Hacking--Keel}

\subsubsection*{Cusp singularities: $1/p + 1/q + 1/r < 1$}  Let us first recall the set-up of Looijenga's conjecture. 
A \emph{cusp singularity} $(X,x)$ is the germ of an isolated normal surface singularity such that the exceptional divisor $D = \pi^{-1}(x)$ of the minimal resolution $\pi: \widetilde{X} \to X$ of the singularity is a cycle of smooth rational curves meeting transversally. Cusp singularities naturally arise in so-called `dual' pairs; for such a pair $(X, x)$, $(X', x')$, the associated exceptional divisors, say $D$ and $D'$, are called dual cycles. Given the sequence of self-intersections of the components of $D$, one can algorithmically obtain those of $D'$, and vice versa. 
Looijenga  \cite{Looijenga} showed that if a cusp with cycle $D'$ is smoothable, then there exists a pair $(Y, D)$ such that $Y$ is a smooth rational surface, and $D \in |-K_Y|$ is the dual cycle to $D'$; he conjectured that the converse also holds. 
Key to his work is the construction for each dual pair $(D, D')$  of a \emph{Hirzebruch--Inoue surface},
a smooth complex surface 
 whose only cycles are the components of $D$ and $D'$.  

For a triple $(p,q,r)$ with $1/p+ 1/q+1/r <1$,  and any $a \neq 0$, the singularity $T_{p,q,r}$ is a cusp singularity -- in fact, these are precisely those cusp singularities which are also hypersurface singularities.  In the work of Arnol'd and collaborators, they are called \emph{hyperbolic singularities}. 
  (For $a=0$, one gets a  different isolated singularity, often know as a Brieskorn--Pham one.)

The dual cusp singularity to $T_{p,q,r}$ is usually known as a \emph{triangle singularity}. It has a resolution with  exceptional divisor $D$, a cycle of three rational curves meeting transversally; the three components of $D$ have self intersections $1-p, 1-q$ and $1-r$ respectively. In this case, as $T_{p,q,r}$ is a hypersurface singularity, it is immediate to give a smoothing of it: its Milnor fibre $\mathcal{T}_{p,q,r}$. 
On the other hand, one can construct by hand pairs $(Y, D)$.
To do so, one possibility is as follows: pick collections of, respectively, $p, q$ and $r$ points on the interiors of each of the components  of the toric anti-canonical divisor $D_{\P^2}$ on $\P^2$ (possibly with repeats). Let $Y_{\bar{p}, \bar{q}, \bar{r}}$ be the smooth rational variety obtained by blowing up all $p+q+r$ points. 
If a point $x$ is repeated in the collection, say twice, our convention is to first blow up $x$, then  blow-up the intersection of the exceptional divisor $\pi^{-1}(x)$ with the strict transform of $D_{\P^2}$. 
Let $D \subset Y_{\bar{p}, \bar{q}, \bar{r}}$ be the strict transform of $D_{\P^2}$; by construction, the pair $(Y_{\bar{p}, \bar{q}, \bar{r}}, D)$ is as desired. 

Loosely, from \cite{GHK_logCY, CPS} we expect that under homological mirror symmetry, pairs $(Y_{\bar{p}, \bar{q}, \bar{r}}, D)$ will correspond to smoothings of $T_{p,q,r}$. 
As a special example, consider the case where we simply take $p$ copies of one point on the first component of $D_{\P^2}$, $q$ copies of a point on the second, and $r$ copies of a point on the third. Moreover, assume that these points are collinear -- for instance, pick $[1:-1:0]$, $[0:1:-1]$ and $[-1:0:1]$. 
Denote the resulting blow-up by $Y_{p,q,r}$. We expect algebraic invariants of $(Y_{p,q,r}, D)$ to correspond to symplectic invariants of $\mathcal{T}_{p,q,r}$. (Deforming the choices of $p$, $q$ and $r$ points should correspond to equipping $\mathcal{T}_{p,q,r}$ with non-exact symplectic forms; we will not consider this here.)

\subsubsection*{Simple elliptic singularities: $1/p + 1/q + 1/r = 1$}
There are precisely three triples of positive integers the sum of whose reciprocals is equal to one: $(3,3,3)$, $(4,4,2)$ and $(6,3,2)$. 
For the constant $a$ we exclude values such that  $x^p+y^q+z^r+ axyz$ has a non-isolated singularity at zero: $a^3 \neq -27$ for $(3,3,3)$, $a^2 \neq 9$ for $(4,4,2)$, and  $a^6 \neq 432$ for $(6,3,2)$. 
In works of Arnol'd and collaborators, these are known as \emph{parabolic singularities}; to use different terminology, they are also precisely those \emph{simple elliptic singularities} which are hypersurface singularities \cite{Saito}.

The Milnor fibres $\mathcal{T}_{3,3,3}$, $\mathcal{T}_{4,4,2}$ and $\mathcal{T}_{6,3,2}$ are given by deleting a smooth anticanonical divisor (an elliptic curve) from a del Pezzo surface of degree 3, 2 and 1, respectively \cite[Proposition 5.19]{Keating14}. 
As such, these also fit into the framework of Gross, Hacking and Keel \cite[p.~6]{GHKv1}: they conjectured that if one applies their constructions to a pair $(Y, D)$, where $Y$ is a rational elliptic surface and $D$ is an $I_d$ fibre of the surface (i.e.~a cycle of $d$ rational curves meeting transversally), then the mirror family that one obtains contains a del Pezzo surface of degree $d$ with a smooth anti-canonical divisor deleted. 
As motivation for this conjecture, note that there are analogues of Hirzebruch--Inoue surfaces in this setting, called \emph{parabolic Inoue surfaces}. These are smooth compact complex surfaces whose only curves are an elliptic curve of self-intersection $-n$, and the components of a cycle of $n$ rational curves of self-intersection $-2$.

Now notice that $Y_{3,3,3}$, by construction, is a rational elliptic surface, with $D$ an $I_3$ fibre. In the other two cases, by blowing down either one rational $-1$ curve in $D$ (for $Y_{4,4,2}$), or, sequentially, two rational $-1$ curves in $D$ (for $Y_{6,3,2}$), one gets rational elliptic surfaces with, respectively, an $I_2$ fibre and an $I_1$ fibre. In particular, Gross--Hacking--Keel's conjecture gives candidate mirror spaces to these. 


\subsection{Statement of results}
There is a distinguished Lefschetz fibration $\Xi: \mathcal{T}_{p,q,r} \to \C$, with smooth fibre $M$, such that we have the following collection of equivalences of categories:
\begin{eqnarray}
D^b \Fuk^{\to} (\Xi) & \cong & D^b \Coh(Y_{p,q,r}) \label{eq:iso1} \\
D^{\pi} \Fuk(M) & \cong & \Perf(D) \label{eq:iso2} \\
D^b \mathcal{W}(\mathcal{T}_{p,q,r}) & \cong & D^b \Coh(Y_{p,q,r} \backslash D)  \label{eq:iso3}
\end{eqnarray}
where
\begin{itemize}
\item $\Fuk^{\to} (\Xi)$ denotes the directed Fukaya category of $\Xi$, sometimes also known as its Fukaya--Seidel category. This is associated to the fibration $\Xi$ together with the choice of a distinguished collection of vanishing cycles for it. (`Distinguished' means that the corresponding vanishing cycles only intersect at one end point.) We use the set-up of \cite[Section 18]{Seidel_book}.
\item $\Fuk(M)$ denotes the Fukaya category of the fibre $M$, a three-punctured elliptic curve, as set up in \cite[Section 12]{Seidel_book}. Its objects are complexes built from compact Lagrangians in $M$.
$D^{\pi} \Fuk(M)$ denotes its derived split-closure. 
 $\Perf(D)$ is the category of perfect complex of algebraic vector bundles on the singular variety $D$.
\item $\mathcal{W}(\T_{p,q,r})$ denotes the wrapped Fukaya category of $\mathcal{T}_{p,q,r}$, as set-up in \cite{Abouzaid-Seidel_wrapped}. Its objects are complexes built from Lagrangians which either are compact, or whose shape near the boundary of $\mathcal{T}_{p,q,r}$ is prescribed. 
\end{itemize}

\begin{remark}
One could also  consider other Lefschetz fibrations on $\mathcal{T}_{p,q,r}$. However, one can't in general expect the corresponding directed Fukaya categories to be mirror to coherent sheaves on any nice compactifications of $Y_{p,q,r} \backslash D$. As such, from the perspective of mirror symmetry, $\Xi$ is a `preferred' Lefschetz fibration on $\mathcal{T}_{p,q,r}$. 
\end{remark}

\subsubsection*{Extension for $1/p+1/q+1/r > 1$} While the singularities $T_{p,q,r}$ exist for $1/p+1/q+1/r \leq 1$,  the space $Y_{p,q,r}$ makes sense for any triple of positive integers $(p,q,r)$. Similarly, one can define a Liouville domain $\mathcal{T}_{p,q,r}$, together with a Lefschetz fibration $\Xi: \mathcal{T}_{p,q,r} \to \C$:
simply take the description of Proposition \ref{thm:LF} as a definition. (It is however no longer the Milnor fibre of a hypersurface singularity.) Thus interpreted,  the equivalences \eqref{eq:iso1}, \eqref{eq:iso2} and \eqref{eq:iso3} above also hold for these $(p,q,r)$. 

The possible triples of integers here are $(n,2,2)$ for any $n \geq 2$, and $(k,3,2)$, $k=3,4,5$. The manifolds $\mathcal{T}_{p,q,r}$ share many properties of Milnor fibres: they are homotopy equivalent to a wedge of spheres, and a basis for $H_2$ is given by a collection of Lagrangian spheres. The associated intersection form is given by the Dynkin diagram of Figure \ref{fig:Dynkin}. These are `augmented' versions of the intersection forms of $D_{n+2}$ and $E_{k+3}$, given by taking their direct sum with a one-dimensional vector space equipped with the trivial intersection form.
\begin{figure}[htb]
\begin{center}
\includegraphics[scale=0.38]{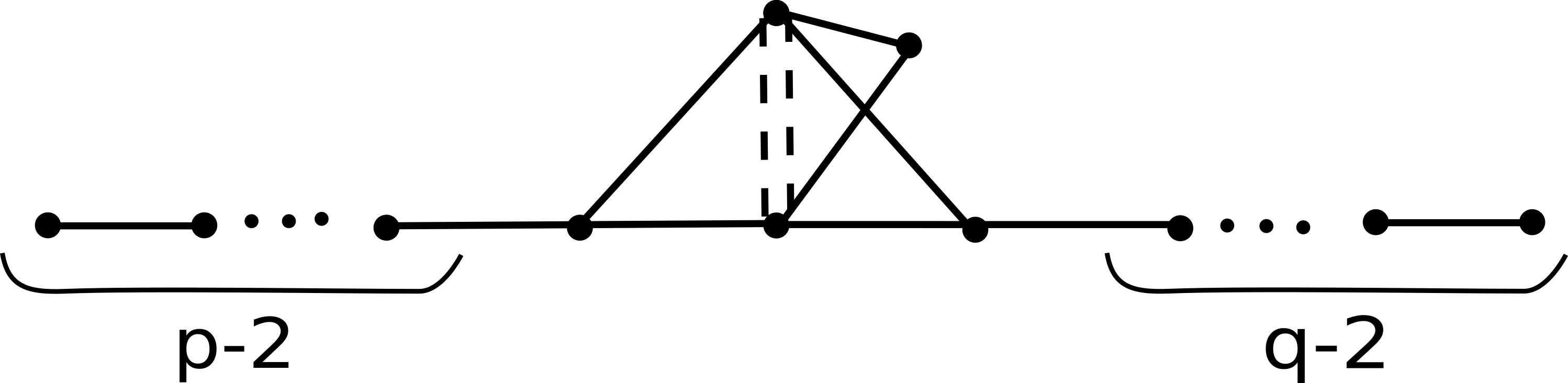}
\caption{Intersection form for $\mathcal{T}_{p,q,2}$: Dynkin diagram.}
\label{fig:Dynkin}
\end{center}
\end{figure}

On the other hand, the spaces $Y_{n,2,2}$ and $Y_{k,3,2}$ contain triangular configurations of $-2$ curves.  These can be blown down to obtain singular varieties $\bar{Y}_{n,2,2}$, and $\bar{Y}_{k,3,2}$, with, respectively, singularities of types $D_{n+2}$ and $E_{k+3}$. 
We note that the spaces $\bar{Y}_{n,2,2} \backslash D$ and $\bar{Y}_{k,3,2} \backslash D$ are affine. They have Euler number two\footnote{The author thanks Daniel Litt for this observation.}; in contrast,  the affine cones on the $D_{n+2}$ and $E_{k+3}$ singularities (e.g.~$x^3+y^3=0$ for $D_4$) have Euler number one.

\subsubsection*{Relation to the directed Fukaya category of $T_{p,q,r}$} It was already known \cite[Theorem 7.1]{Keating14} that
\bq
D^b \Fuk^{\to}(T_{p,q,r}) \cong D^b \Coh(\P^1_{p,q,r})
\eq
where $\Fuk^{\to}(T_{p,q,r})$ is the directed Fukaya category of the singularity $T_{p,q,r}$, i.e.~the directed Fukaya category associated to the Lefschetz fibration on $\C^3$ given by any Morsification of $T_{p,q,r}$, and $\P^1_{p,q,r}$ denotes an orbifold $\P^1$, with orbifold points of isotropies $p,q$ and $r$. 
This can be tied back into the picture presented here as follows. Consider the cotangent bundle of $\P^1_{p,q,r}$. Under a resolution of the three orbifold points, the zero-section $\P^1_{p,q,r}$ pulls back to a triangular chain of $-2$ curves, with one central $-2$ curve, and chains of lengths $p-1$, $q-1$ and $r-1$ attached to it. On the other hand, one sees precisely such a triangular configuration in the interior of $Y_{p,q,r} \backslash D$.

\subsection{Outline of proof}

The Lefschetz fibration $\Xi$ is described in Section \ref{sec:fibration}.
It has $p+q+r+3$ critical points, and $\Fuk^{\to}(\Xi)$ has the same number of objects, one for each of the vanishing cycles. We match up the distinguished collection of Lagrangian branes associated with the vanishing cycles with a full exceptional sequence of objects for $D^b \Coh(Y_{p,q,r})$, and show that the cohomology level products on the two agree (Proposition \ref{prop:isomorphic_sequences}). The latter exceptional collection is obtained using Bondal--Orlov's theorem \cite{Bondal-Orlov}. See Section \ref{sec:iso1}. 

	To prove equivalence \eqref{eq:iso2} (Proposition \ref{pro:iso2}), we use work of Lekili and Perutz \cite{Lekili-Perutz}, which relates the Fukaya category of a once-punctured torus with the category of perfect complexes on a nodal elliptic curve, together with $3:1$ covering arguments. See Section \ref{sec:iso2}.

Our principal aim is to prove equivalences \eqref{eq:iso1} and \eqref{eq:iso3} (Corollary \ref{thm:iso1} and Theorem \ref{thm:iso3}). These follow from a sufficiently fine understanding of Propositions  \ref{prop:isomorphic_sequences}  and Proposition \ref{pro:iso2}, along with relations between them. We consider two short exact sequences of  the form
\bq
0 \to \A \to \B \to \A^\vee[-1] \to 0
\eq
where, for the $\Aoo$--category $\A$, $\B$ is an $\Aoo$--bimodule over $\A$, and $\A$ and  $\A^\vee$ are viewed as the diagonal, resp.~dual diagonal, bimodules. 
On the symplectic side, such a sequence arises in Seidel's program on Fukaya categories associated to Lefschetz fibrations \cite{Seidel_FLI, Seidel_FL2}.  
Following \cite{Seidel_FLI}, one takes $\A_\F = \Fuk^{\to} (\Xi)$ for our choice of distinguished collection of vanishing cycles. $\B_\F \subset \Fuk(M)$ is the full subcategory with objects the same Lagrangians as in $\A_\F$. For suitable models there is a natural inclusion of categories $\A_\F \subset \B_\F$, which equips $\B_\F$ with the structure of a bimodule over $\A_\F$.

On the algebraic side, there exists such a sequence with categories $\A_\tC$ and $\B_\tC$ as follows. We fix some dg enhancement $\twvect(Y_{p,q,r})$ (resp.~$\twvect(D)$) of $D^b \Coh(Y_{p,q,r})$ (resp.~$\Perf(D)$). $\A_\tC \subset \twvect(Y_{p,q,r})$ is the full subcategory on the objects of the exceptional sequence,  and $\B_\tC \subset \twvect(D)$ a full subcategory on a collection of split-generators.
For suitable models there is an inclusion of categories $\A_\tC \subset \B_\tC$, which one can think of as a refinement of the pull-back $i^\ast: D^b \Coh(Y_{p,q,r}) \to \Perf(D)$. 
 The equivalence \eqref{eq:iso1} can be strengthened  to a quasi-isomorphism $\B_\tC \cong \B_\F$, which, together with Proposition \ref{prop:isomorphic_sequences}, is sufficient to prove Corollary \ref{thm:iso1}. Along the way, we prove formality of $\A_\F$ and $\A_C$. See Section \ref{sec:restrictions}.

To prove equivalence \eqref{eq:iso3}, we apply the the localization procedure described in \cite{Seidel_subalgebras} to the two pairs $(\A_\F, \B_\F)$ and $(\A_\tC, \B_\tC)$. On the algebraic side,  localization  yields $D^b \Coh(Y_{p,q,r} \backslash D)$; on the Fukaya side, by work-in-progress of Abouzaid and Seidel \cite{Abouzaid-Seidel}, it gives 
 $D^b \mathcal{W}(\T_{p,q,r})$. 
Here is some geometric intuition: on the symplectic side, we obtain the localization of $D^b \Fuk^{\to} (\Xi)$ with respect to a natural transformation $T: \mu \to \text{Id}$, where $\mu$  is the `monodromy at infinity' acting on the Lefschetz fibration, as in \cite{Seidel_FL2}. On the algebraic side, we get the localization of $D^b \Coh(Y_{p,q,r})$ with respect to a natural transformation $T:- \otimes \O_{Y_{p,q,r}}(-D) \to \text{Id}$ given by multiplying by a section of $\O_{Y_{p,q,r}}(D)$. See Section \ref{sec:localization}.

\subsection*{Acknowledgements} This project began towards the end of my time as a graduate student at MIT; it is a pleasure to thank my advisor, Paul Seidel, for several useful conversations. I would also like to thank Mark Gross and Paul Hacking for their patient explanations, and Mohammed Abouzaid, Denis Auroux, Robert Friedman, John Lesieutre, Timothy Perutz, Nicholas Sheridan and Ivan Smith for helpful conversations and correspondence.


\section{A distinguished Lefschetz fibration on the Milnor fibre $\T_{p,q,r}$}\label{sec:fibration}

\subsection{Known Lefschetz fibration on $\T_{p,q,r}$}

Let $\mathcal{T}_{p,q,r}$ denote  the Milnor fibre of $T_{p,q,r}$; pick any ordering of the subscripts so that $p \geq 3$, $q \geq 3$ and $r \geq 2$.

\begin{proposition}\cite[Proposition 4.3]{Keating14}\label{thm:oldLF}
 The space $\mathcal{T}_{p,q,r}$ can be described as a smoothing of the corners of the total space of a Lefschetz fibration $\pi$ with $2p+2q+r$ critical points, and smooth fibre the Milnor fibre of the two-variable singularity $x^p+y^q+x^2y^2$ (after adding $z^2$, this is the singularity $T_{p,q,2}$). This has the following topology
\begin{itemize}
\item if  $p$ and $q$ are odd, two punctures and genus $(p+q)/2$;
\item if $p$ is odd and $q$ is even, or vice versa, three punctures and genus $(p+q-1)/2$;
\item if $p$ and $q$ are even, four punctures and genus $(p+q-2)/2$. 
\end{itemize}
Of these critical points, $2(p+q+1)$ of them pair off to give $p+q+1$ matching paths (and vanishing cycles in $\T_{p,q,r}$), which all intersect in a single smooth point. In the fibre above this point, the matching cycles restrict to a configuration of vanishing cycles for the two-variable singularity $x^p+y^q+x^2y^2$ . This consists of a `core' of five cycles, labelled $A$, $B$, $P_1$, $Q_1$ and $R_1$ in Figure \ref{fig:T345fibreandbaseOLD}, together with two $A_n$--type chains (on the Riemann surface): one, of total length $p-1$, starting at $P_1$, and another, of total length $q-1$, starting at $Q_1$. The case of $p= 3$ and $q= 4$ is given in Figure \ref{fig:T345fibreandbaseOLD}.  

   There is an $A_{r-2}$--type chain of matching paths between  the remaining $r-2$ critical points; these give the remaining vanishing cycles for $\mathcal{T}_{p,q,r}$. They are always attached following the configuration of Figure \ref{fig:T345fibreandbaseOLD} (case $r=5$). 

Moreover, in all cases, there is a Hamiltonian isotopy of the total space such that the image of $R_2$ does not intersect $A$, and its intersections with other cycles are unchanged.

\begin{figure}[htb]
\begin{center}
\includegraphics[scale=0.85]{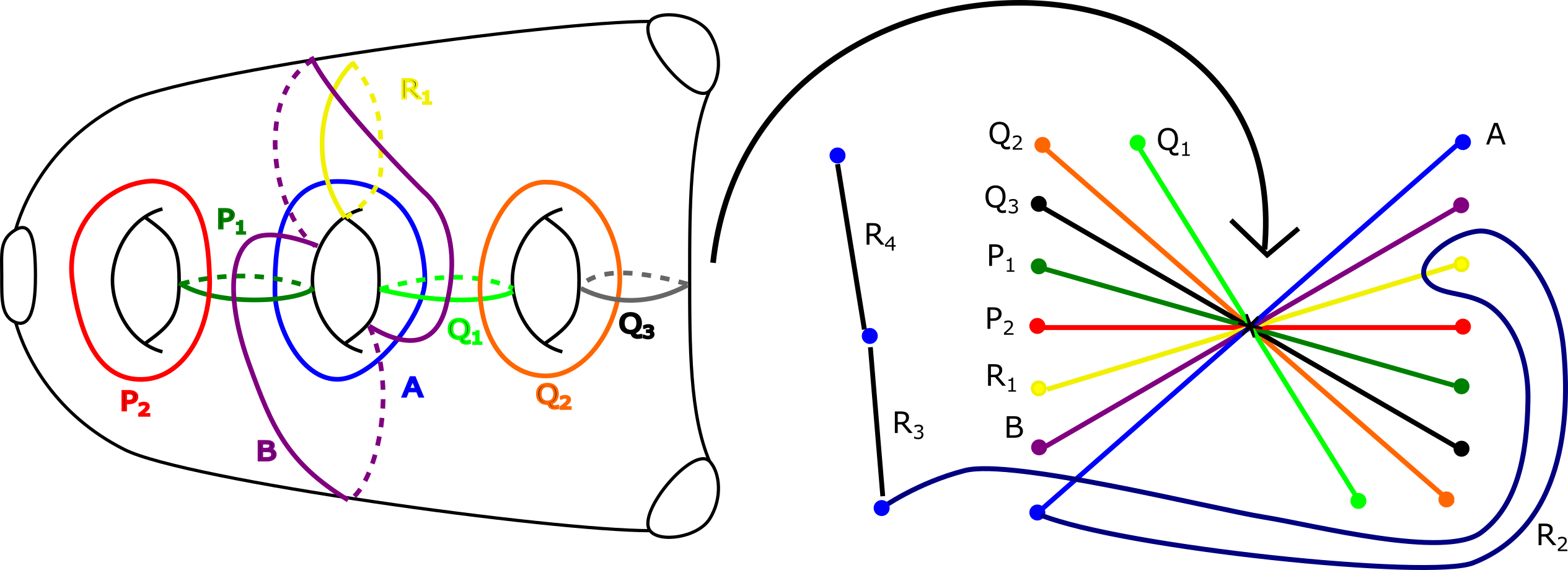}
\caption{ Matching paths, and cycles in a fibre, giving a distinguished configuration of vanishing cycles for $T_{3,4,5}$
}
\label{fig:T345fibreandbaseOLD}
\end{center}
\end{figure}
 
\end{proposition}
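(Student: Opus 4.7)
My plan is to build $\pi$ as a projection onto one coordinate after a suitable Morsification, then read off the vanishing cycle combinatorics. Write $T_{p,q,r}=x^p+y^q+z^r+axyz$ and realise $\T_{p,q,r}$ as the level set $\{T_{p,q,r}=\epsilon\}$ inside a Milnor ball; take $\pi$ to be the projection to $z$, after a small generic perturbation chosen so that $\pi$ is Lefschetz. The total space of the resulting fibration agrees with $\T_{p,q,r}$ after smoothing the corners where the Milnor sphere meets the boundary of the chosen bidisc model. A regular fibre $\pi^{-1}(z_0)$ is the affine curve $\{x^p+y^q+(az_0)xy=\epsilon-z_0^r\}$ (modulo the Morsification); by an Euler-characteristic computation together with a count of ends of this curve at infinity, it is diffeomorphic to the Milnor fibre of the two-variable singularity $x^p+y^q+x^2y^2$, with the stated dependence of genus and puncture number on the parities of $(p,q)$.

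To enumerate critical points of $\pi$, I would count common zeros of $(\partial_x T,\partial_y T)$ on $\T_{p,q,r}$: after Morsification, direct elimination gives $2p+2q+r$ critical points. These split into two groups. The $r-2$ critical points coming from the $z^r$ term cluster along the $z$-axis and form an $A_{r-2}$-chain of matching paths, producing the right-hand portion of Figure \ref{fig:T345fibreandbaseOLD}. The remaining $2(p+q+1)$ pair off into $p+q+1$ matching paths, and by choosing the Morsification symmetrically in $(x,y)$ these matching paths can be arranged to share a common base point. Applying Picard--Lefschetz to the vanishing thimbles of these matching paths, one reads off that their restrictions to the common fibre give the core configuration of five cycles $A,B,P_1,Q_1,R_1$ together with the two $A_n$-type chains of lengths $p-1$ and $q-1$ attached at $P_1$ and $Q_1$.

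The main obstacle is the final claim about the Hamiltonian isotopy separating $R_2$ from $A$. Both are simple closed curves in the fibre surface $M$. My strategy is first to verify $[R_2]\cdot[A]=0$ homologically via a Picard--Lefschetz calculation from the ordered collection of vanishing cycles, and then to invoke the bigon criterion on oriented surfaces: any geometric intersection point beyond the algebraic minimum lies on an innermost bigon, which can be removed by a Hamiltonian isotopy supported in a neighbourhood of that bigon. Care is needed to confirm that no new intersections with the other vanishing cycles are introduced during the isotopy; managing this bookkeeping for every parity case of $(p,q)$ is the most delicate aspect of the argument.
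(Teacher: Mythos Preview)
This proposition is not proved in the present paper at all: it is quoted verbatim from the author's earlier work \cite[Proposition 4.3]{Keating14}, and the paper simply uses it as input for constructing the new fibration $\Xi$ in Proposition~\ref{thm:LF}. So there is no ``paper's own proof'' to compare against here. Your outline for the first parts (projecting to the $z$--coordinate of a Morsification, counting critical points, identifying the generic fibre as the Milnor fibre of $x^p+y^q+x^2y^2$) is in the right spirit and is indeed roughly how the result is obtained in \cite{Keating14}, though the details of extracting the explicit matching-path and fibre-cycle combinatorics are more involved than your sketch suggests.

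There is, however, a genuine error in your treatment of the final claim. You write that $A$ and $R_2$ are ``simple closed curves in the fibre surface $M$'' and propose to separate them using the bigon criterion on a surface. This misidentifies the objects. In the fibration $\pi$ of this proposition, the cycles $A,B,P_i,Q_j,R_1$ are matching cycles whose paths all pass through a common smooth value, so they can be simultaneously viewed as curves in one fibre. The cycles $R_2,\ldots,R_{r-1}$, by contrast, arise from the $A_{r-2}$--chain of matching paths among the remaining critical values; their matching paths do \emph{not} pass through that common point. Consequently $R_2$ is a Lagrangian $2$--sphere in the four-dimensional total space $\T_{p,q,r}$, not a curve in the fibre, and the statement explicitly asks for a Hamiltonian isotopy \emph{of the total space}. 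The surface bigon criterion is therefore inapplicable. What one actually needs is a four-dimensional argument: for instance, re-choosing the matching path for $R_2$ in the base so that it avoids the matching path for $A$ (using that they meet only $R_1$ in common among the core cycles), and checking that this modification is realised by a Hamiltonian isotopy in the total space. Your homological computation $[A]\cdot[R_2]=0$ is correct and necessary, but not sufficient in dimension four.
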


The intersections between vanishing cycles are encoded in the Dynkin diagram of Figure \ref{fig:Dynkin_cycles}.
\begin{figure}[htb]
\begin{center}
\includegraphics[scale=0.45]{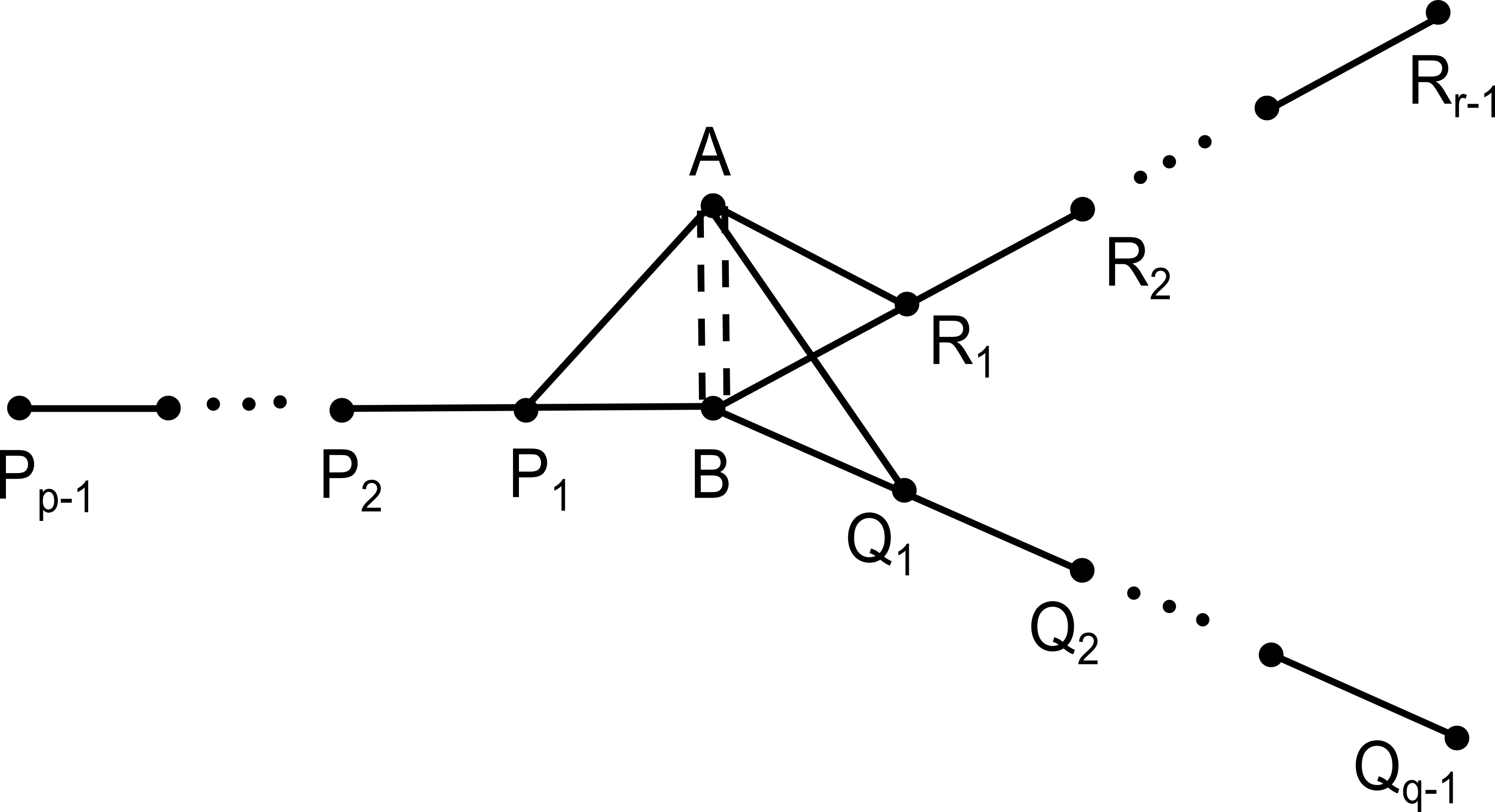}
\caption{Intersections for the vanishing cycles of $\mathcal{T}_{p,q,r}$.
}
\label{fig:Dynkin_cycles}
\end{center}
\end{figure}

\subsection{Some operations on Lefschetz fibrations}

We will want to use a different Lefschetz fibration on $\T_{p,q,r}$, given below in Proposition \ref{thm:LF}. It will be related to the  description of Proposition  \ref{thm:oldLF} by a sequence of moves called stabilizations and mutations, which we briefly recall.

\subsubsection{Stabilizations}

A stabilization is the following operation. Start with a Lefschetz fibration on a four-dimensional Liouville domain  $(M, \omega = d \theta)$, with smooth fibre $\Sigma$ and $n$ critical points. Pick a  distinguished collection of vanishing paths; call the (cyclically ordered) associated  vanishing cycles $L_1, L_2, \ldots, L_n \subset \Sigma$. 
Given an embedded interval $\gamma \subset \Sigma$, with $\partial \gamma \subset \partial \Sigma$ and $0 = [\theta] \in H^1 (\gamma, \partial \gamma)$, one can construct a new Lefschetz fibration as follows:
\begin{itemize}
\item Replace $\Sigma$ with the surface $\Sigma'$ given by attaching a Weinstein handle to $\Sigma$ along $\partial \gamma$.
\item Let $L' \subset \Sigma'$ be the Lagrangian $S^1$ given by gluing $\gamma$ with the core of the handle. Add a critical point to the base of the Lefschetz fibration with corresponding vanishing cycle $L'$. More precisely, with respect to some distinguished collection of vanishing paths extending our previous choices, we get the cyclically ordered collection of vanishing cycles $L', L_1, L_2, \ldots, L_n$. 
\end{itemize}
For further details and generalizations, see e.g.~in \cite[Section 1.2]{Giroux-Pardon} and \cite[Section 3]{Akbulut-Arikan}. 
The total space of the new  Lefschetz fibration is deformation--equivalent to the total space of the original one (they can be connected by a one-parameter family of Liouville domains). 

\subsubsection{Mutations}

A mutation is a modification of the data given to describe a fixed Lefschetz fibration.
Given a distinguished collection of vanishing paths $\gamma_1, \ldots, \gamma_n$, with associated vanishing cycles $L_1, \ldots, L_n$, it is a change of the following form (or its inverse):
\begin{itemize}
\item Take the collection of vanishing paths $\gamma_1, \ldots, \gamma_{i-1}, \gamma'_{i+1}, \gamma_i, \gamma_{i+2}, \ldots, \gamma_n$, where $\gamma'_{i+1}$ is given by post-composing $\gamma_{i+1}$ with a positive loop about $\gamma_i$. See Figure \ref{fig:mutation}.
\item The associated collection of vanishing cycles is $L_1, \ldots, L_{i-1}, \tau_{L_i}L_{i+1}, L_i, \ldots, L_n$.
\end{itemize}
For further details, see e.g.~the exposition in \cite{Seidel_vcm}. 

\begin{figure}[htb]
\begin{center}
\includegraphics[scale=0.80]{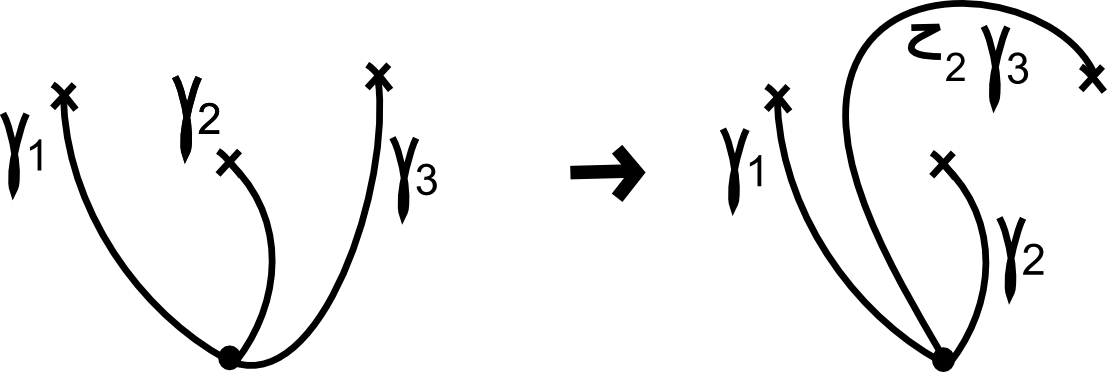}
\caption{Mutation: change to the vanishing paths.}
\label{fig:mutation}
\end{center}
\end{figure}

\subsection{A symmetric Lefschetz fibration}

\begin{proposition}\label{thm:LF}
There is a Lefschetz fibration $\Xi: \T_{p,q,r} \to \C$ with smooth fibre a three-punctured elliptic curve $M$  and $p+q+r+3$ critical values. A description is given in Figure \ref{fig:Tpqrfibreandbase}. This shows critical values joined by matching paths, and the corresponding cycles in the fibre above the distinguished point $\star$. These matching cycles are, moreover, vanishing cycles for the original $T_{p,q,r}$ singularity. (Suitably ordered, they form a distinguished collection of vanishing cycles.)

\begin{figure}[htb]
\begin{center}
\includegraphics[scale=0.45]{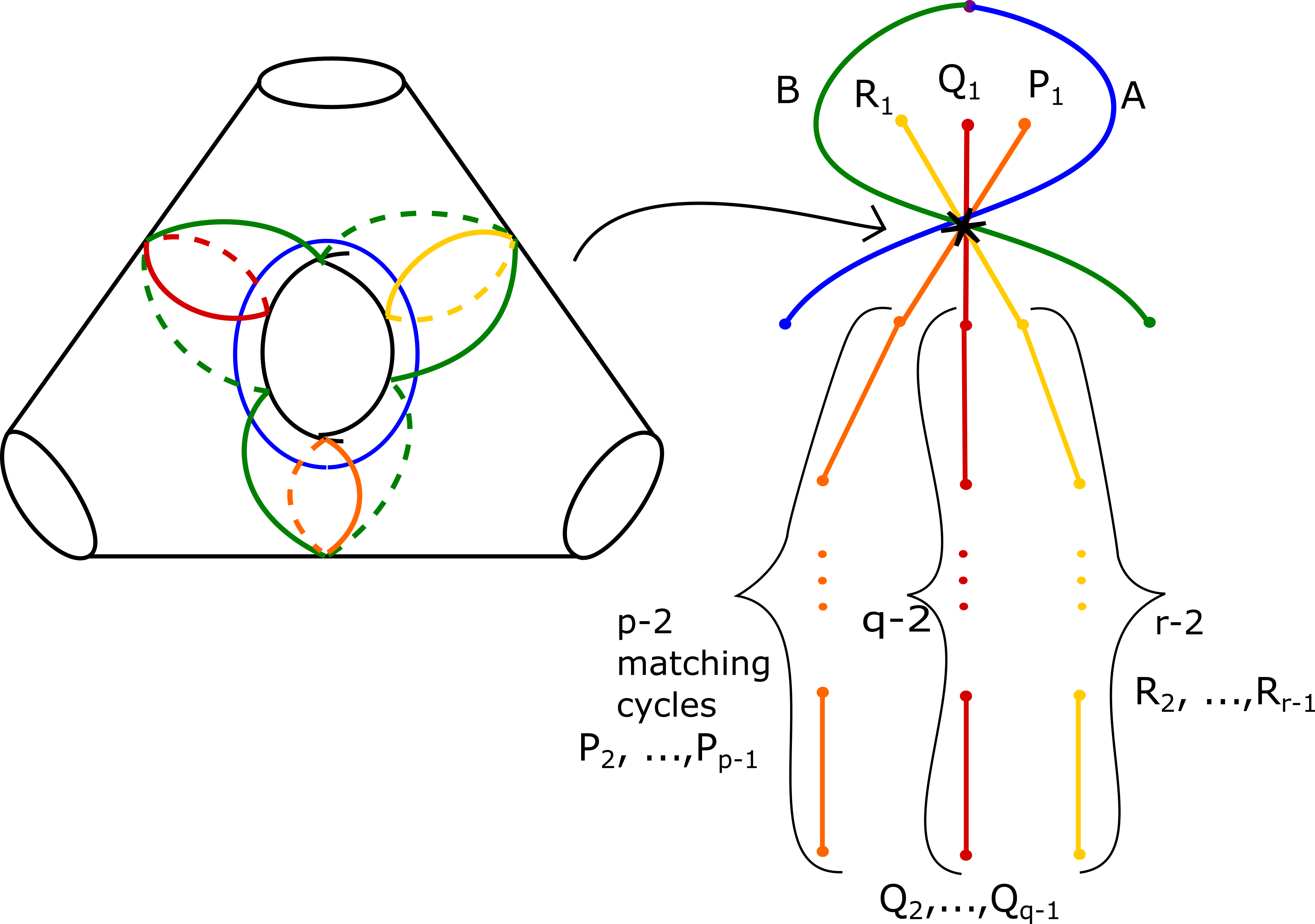}
\caption{ Matching paths, and cycles in a fibre, giving a distinguished configuration of vanishing cycles for $T_{p,q,r}$
}
\label{fig:Tpqrfibreandbase}
\end{center}
\end{figure}

\end{proposition}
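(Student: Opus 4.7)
The plan is to show that the Lefschetz fibration $\Xi$ depicted in Figure \ref{fig:Tpqrfibreandbase}, after $p+q-3$ stabilizations and a sequence of mutations, coincides with the fibration of Proposition \ref{thm:oldLF}. Since stabilizations alter the total space only by a Liouville deformation and mutations preserve the underlying Lefschetz fibration entirely (merely relabeling the distinguished vanishing paths), this will establish both that the total space of $\Xi$ is $\T_{p,q,r}$ and that its matching cycles form a distinguished collection of vanishing cycles for the $T_{p,q,r}$ singularity. The count $p+q-3 = (2p+2q+r) - (p+q+r+3)$ is forced by the critical-point counts.

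Concretely, I would first choose $p+q-3$ arcs in the three-punctured elliptic curve $M$ along which to attach Weinstein $1$-handles. Each attachment introduces a new critical value, together with a new vanishing cycle obtained by closing up the arc with the core of the handle. These arcs must be chosen so that the resulting surface matches the fibre of the old Lefschetz fibration—the Milnor fibre of $x^p+y^q+x^2y^2$, whose genus and puncture count depend on the parities of $p$ and $q$—and so that the enlarged ordered list of vanishing cycles can subsequently be transformed into the one of Figure \ref{fig:T345fibreandbaseOLD} by Hurwitz moves. Euler characteristic bookkeeping confirms the counts: the three-punctured elliptic curve has Euler number $-3$, so after $p+q-3$ handle attachments one arrives at a surface of Euler number $-(p+q)$, matching the old fibre.

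The second step is to perform these Hurwitz moves. The target list of Figure \ref{fig:T345fibreandbaseOLD} is asymmetric: two arms of lengths $p-1$ and $q-1$ attached to a five-cycle core, with a separate $A_{r-2}$ chain, whereas the starting list from Figure \ref{fig:Tpqrfibreandbase} is threefold-symmetric with three arms of lengths $p-1, q-1, r-1$ meeting a central triangle. The main obstacle is the bookkeeping: each mutation replaces a consecutive pair $(L_i,L_{i+1})$ by $(\tau_{L_i}L_{i+1},L_i)$, and after a long sequence the cumulative action of Dehn twists must be tracked cycle by cycle and verified against Figure \ref{fig:T345fibreandbaseOLD}. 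I would organize this by handling the three arms one at a time—the moves needed for each arm being largely localized near that arm—and by exploiting the threefold symmetry of $\Xi$ to reduce the genuinely new work to essentially two arm-breaking steps, one to sacrifice a given arm into the role of the $A_{r-2}$ chain and one to rearrange the remaining two arms and the core into the asymmetric configuration of Figure \ref{fig:T345fibreandbaseOLD}.
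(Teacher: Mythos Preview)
Your overall strategy is correct and matches the paper's: start from the new fibration $\Xi$ and reach the old fibration of Proposition \ref{thm:oldLF} by $p+q-3$ stabilizations together with mutations; your Euler-characteristic and critical-point counts are right. The paper organizes the computation more economically than your outline suggests, however. Rather than choosing all $p+q-3$ arcs at once and then tracking a long global sequence of Hurwitz moves, it (i) performs a \emph{single} explicit stabilization and two explicit mutations to match the five-cycle core $A,B,P_1,Q_1,R_1$; (ii) observes that the $P$- and $Q$-arms are local $A_n$ configurations, and that the passage between the two standard Lefschetz fibrations on the $A_n$ Milnor fibre (the $x$-projection versus the $z$-projection of $\{x^2+y^2+p(z)=1\}$) is a known iterated $A_2$ stabilization; and (iii) handles a neighbourhood of $A,R_1,R_2$ by a purely local $A_3$-chain rearrangement, with no stabilization needed on the $R$-arm at all. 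This modular use of standard local models sidesteps the cycle-by-cycle Dehn-twist bookkeeping you anticipate as the ``main obstacle,'' and makes transparent why the $R$-arm plays a distinguished role despite the threefold symmetry of $\Xi$.
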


To prove Proposition \ref{thm:LF}, we will start with the Lefschetz fibration described therein, and perform moves to recover the one in \ref{thm:oldLF}. First, start with Figure \ref{fig:Tpqrfibreandbase}, and perform the stabilization shown in Figure \ref{fig:stabilizationcore}. After the mutations
\begin{eqnarray}
b & \to & \tau^{-1}_a b \\
a & \to & \tau_f \tau_e \tau_d \tau_c a
\end{eqnarray}
one precisely recovers the `core' configuration of Proposition  \ref{thm:oldLF}, given in Figure  \ref{fig:stabilizedcoremutated}.
\begin{figure}[htb]
\begin{center}
\includegraphics[scale=0.40]{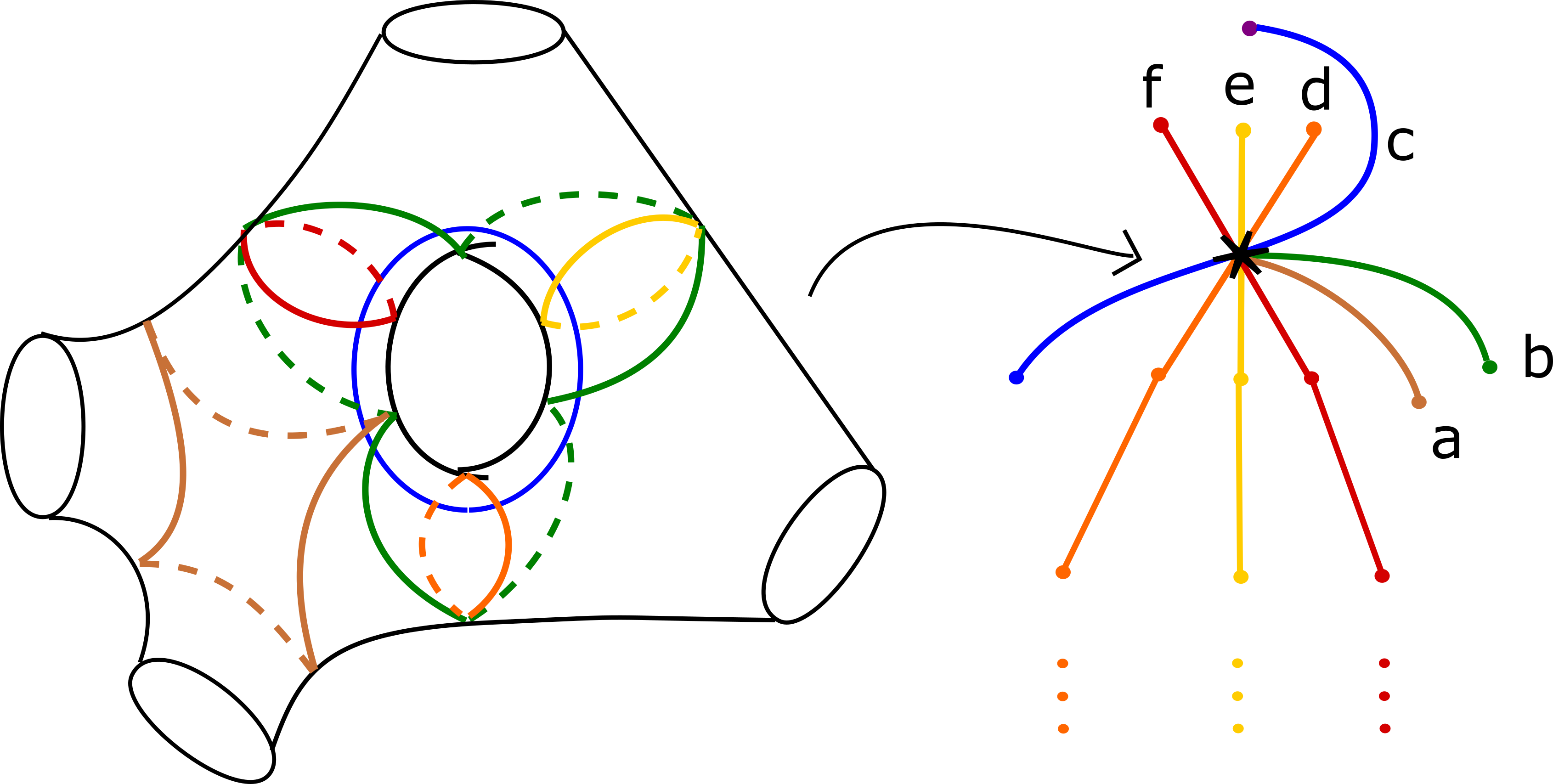}
\caption{Core of Figure \ref{fig:Tpqrfibreandbase} after stabilizating along the brown curve $a$.}
\label{fig:stabilizationcore}
\end{center}
\end{figure}
\begin{figure}[htb]
\begin{center}
\includegraphics[scale=0.40]{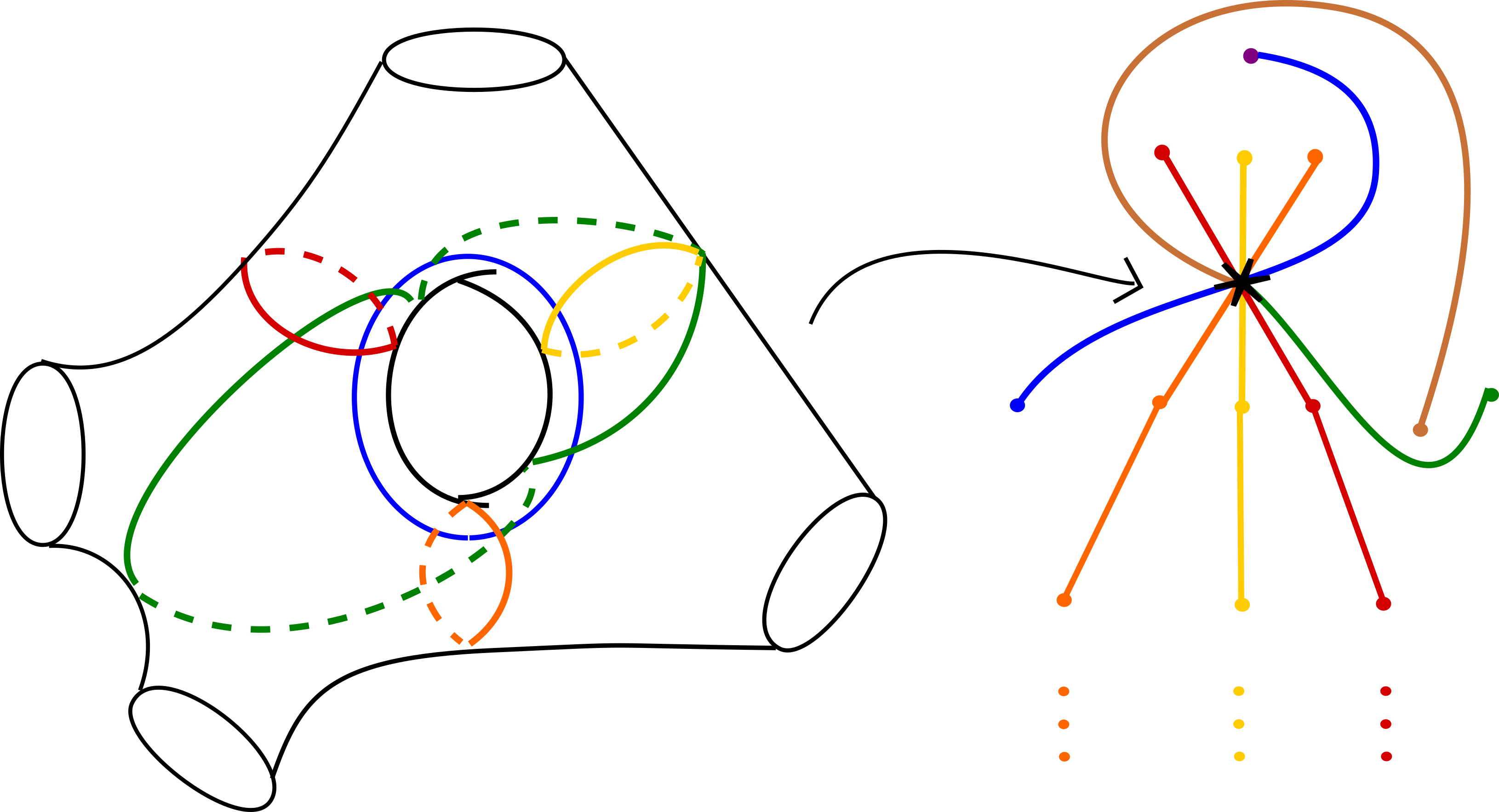}
\caption{Stabilized core of Figure \ref{fig:Tpqrfibreandbase} after mutations. The brown and green vanishing cycles now agree.}
\label{fig:stabilizedcoremutated}
\end{center}
\end{figure}

Now notice that $P_1, \ldots, P_{p-1}$ and $Q_1, \ldots, Q_{q-1}$ are simply $A_n$ chains, for $n=p-1, q-1$. Locally, the Lefschetz fibrations correspond to the two `standard' Lefschetz fibrations on the $A_n$ Milnor fibre:  $$x: \{ x^2 + y^2 + p(z) =1 \} \to \C,$$ and $$z: \{ x^2 + y^2 + p(z) =1 \} \to \C$$
where $p$ is a Morse polynomial of degree $n+1$.
One can pass from the second one to the first one by a sequence of stabilizations, iterating the $A_2$ model, which is  given in Figure \ref{fig:A2stabilization}.

\begin{figure}[htb]
\begin{center}
\includegraphics[scale=0.25]{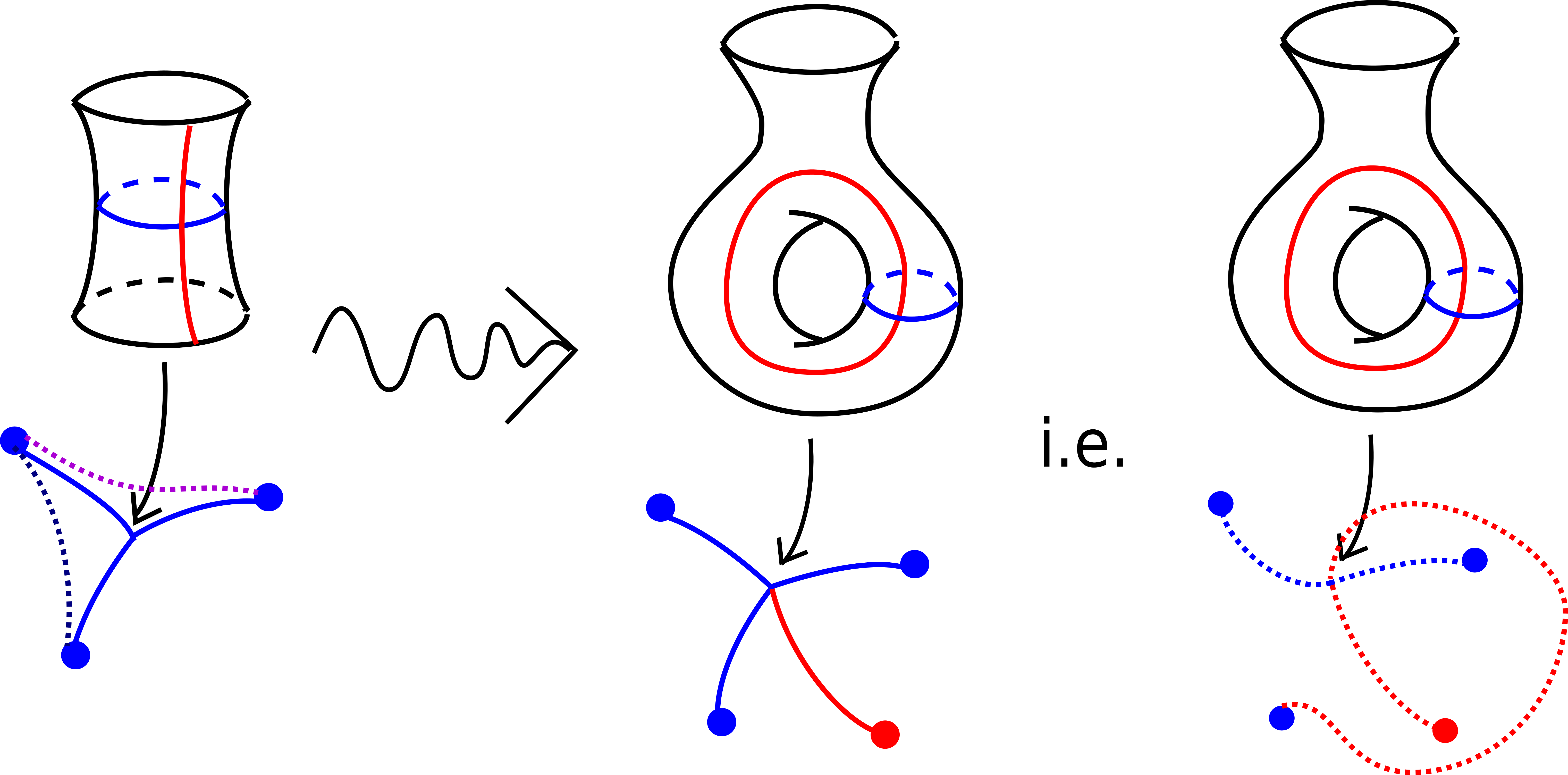}
\caption{Stabilization of a Lefschetz fibration on the $A_2$ Milnor fibre, along the red curve. The dotted lines give matching paths for vanishing cycles in the total spaces.}
\label{fig:A2stabilization}
\end{center}
\end{figure}

Finally, consider the local model for a neighbourhood of $A$, $R_1$ and $R_2$ in Figure \ref{fig:T345fibreandbaseOLD} (``old'' Lefschetz fibration). This is the left-hand side of Figure \ref{fig:R2localmodel}. They form an $A_3$--type chain, and one can rearrange to get the matching paths and cycles on the right-hand side of Figure \ref{fig:R2localmodel}. 
This is precisely the configuration one sees as the local model for a neighbourhood of $A$, $R_1$, and $R_2$ in Figure \ref{fig:Tpqrfibreandbase} (``new'' Lefschetz fibration).  
This completes the proof of Proposition \ref{thm:LF}.

\begin{figure}[htb]
\begin{center}
\includegraphics[scale=0.25]{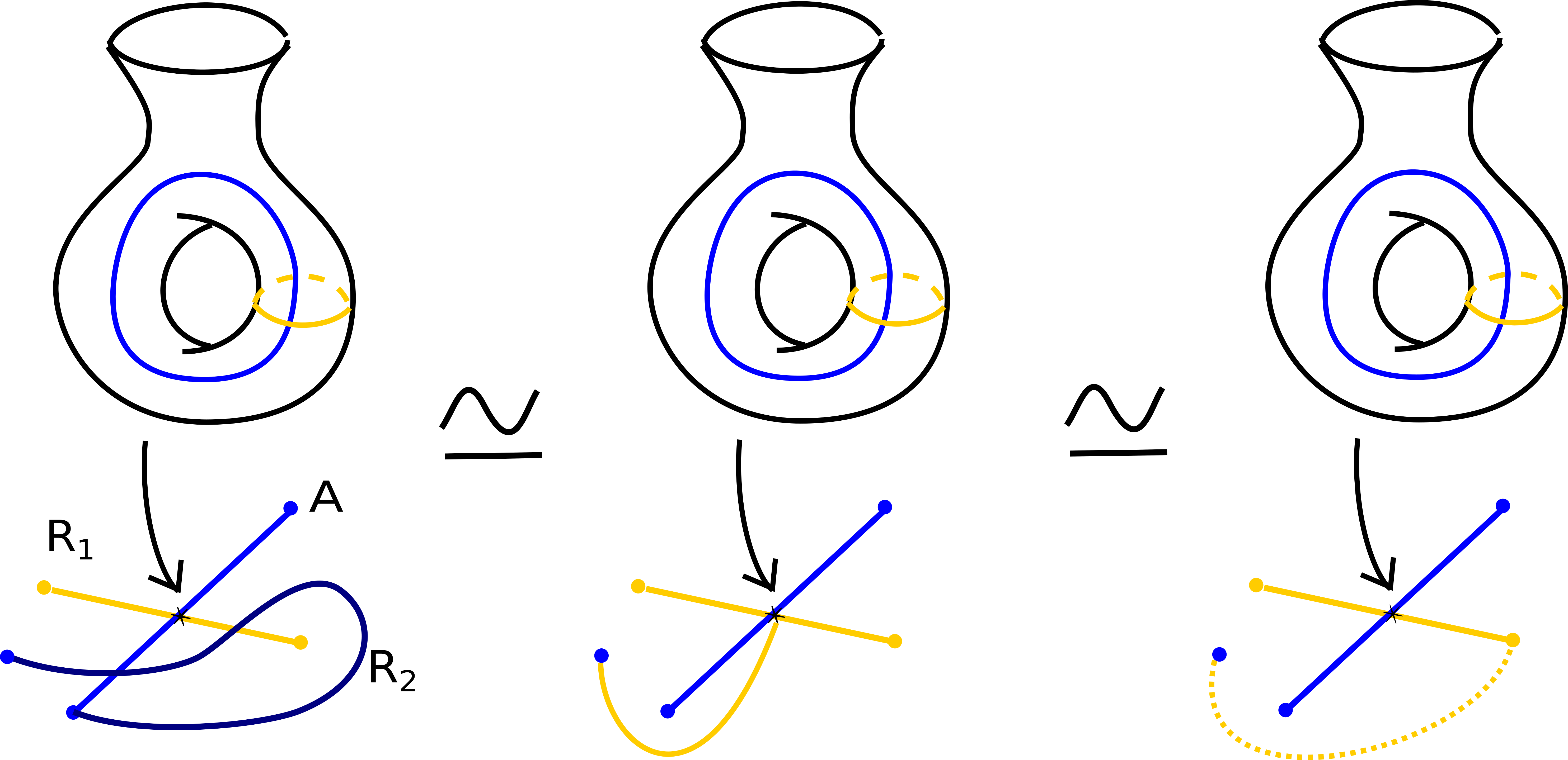}
\caption{Local model for a neighbourhood of $A$, $R_1$ and $R_2$: three instance of exactly the same Lefschetz fibration, described using different matching and vanishing path data.}
\label{fig:R2localmodel}
\end{center}
\end{figure}

After deformation, we can arrange for the total space of the second Lefschetz fibration (with corners smoothed) to be a Liouville subdomain of the first one, given by taking convex (after deformation) open subsets of the fibre and of the base. 
(The cobordism given by the complement of the two is trivial.)
In particular, one can equip the new Lefschetz fibration $\Xi$ with a boundary-convex almost-complex structure $J$ such that $\Xi$ is $(J,i)$--holomorphic.


\section{$D^b \Fuk(\Xi)$ and $D^b Coh(Y_{p,q,r})$: comparison of semi-orthogonal decompositions}\label{sec:iso1}

We use Seidel's conventions (e.g.~\cite{Seidel_book}) for the ordering of indices  in $A_\infty$--operations throughout this document.

\subsection{$\A$: directed Fukaya category of $\Xi$}\label{sec:directed_Fukaya}

\subsubsection{Background: set-up and conventions}\label{sec:directed_Fukaya_background}
Given the data of a Lefschetz fibration $\Pi$, together with some auxiliary choices, one can build the directed Fukaya category of $\Pi$, $\Fuk^{\to} (\Pi)$. The principal auxiliary data is the choice of an ordered distinguished collection of vanishing paths for $\Pi$. The associated category $D^b \Fuk^{\to} (\Pi)$ is an invariant of the Lefschetz fibration. There are two different, equivalent ways of constructing these categories: one using the collection of Lagrangian thimbles associated to the vanishing paths (in which case directedness arises naturally), and one using the Lagrangian vanishing cycles in the smooth fibre of $\Pi$ (in which case, superficially, directedness presents itself as an algebraic imposition). 
We shall use the second approach, which better suits our later purposes. 
We give details in the case where the fibre of $\Pi$, say $(M, \omega = d \theta)$, has real dimension two, assuming moreover that $c_1(M) = 0$. For further background, see \cite[Section 18]{Seidel_book}. 

Fix a trivialization $TM \cong M \times \C$, and let $\alpha: \textrm{Gr}(TM) \to S^1$ be the associated \emph{squared} phase map. (See \cite[Section 11j]{Seidel_book}.) Suppose we are given an ordered, distinguished collection of vanishing paths for $\Pi$. Call the associated vanishing cycles $V_1$, \ldots, $V_m$. The objects of $\Fuk^{\to} (\Pi)$ are the Lagrangian branes
\bq
V_i^\# = (V_i, \alpha_i^\#, \mathfrak{s}_i ) 
\eq
where
\begin{enumerate}
\item $\alpha_i^\#$ is a choice of grading for $V_i$, i.e.~a smooth function $V_i \to \R$ such that $\mathrm{exp}(2\pi i \alpha^\#_i (x) ) = \alpha (T{V_i}_x)$. For a given $V_i$, these form a $\Z$-torsor. (Notice that $\alpha$ and $\alpha^\#_i$ determine an orientation of $V_i$ -- see \cite[Remark 11.18 and Section 12a]{Seidel_book}.)
\item $\mathfrak{s}_i$ is a spin structure on $V_i$. In the case at hand, $V_i$ is a circle, which admits two spin structures; we choose the \emph{non-trivial} one, i.e.~the one which corresponds to the connected double-cover of $S^1$.
\end{enumerate}

After generic Hamiltonian perturbations, we may assume that any pair $\{ V_i , V_j \}$ meet transversally, and that there are no triple intersection points. Fix a boundary-adapted complex structure $J$. 
The morphism groups are given by:
\bq
hom^\ast (V_i, V_j) = 
\begin{cases} CF^\ast (V_i, V_j ) & \textrm{if \,} i<j \\
\C\langle e_i \rangle & \textrm{if \,} i=j \\
0 & \textrm{otherwise}
\end{cases}
\eq
Here $e_i$ is an element of degree zero, and $CF^\ast (V_i, V_j)$ is the Floer chain group associated to $V_i$ and $V_j$, i.e.~the graded complex vector space generated by $V_i \pitchfork V_j$, where $x \in V_i \pitchfork V_j$ has grading its Maslov index $i(x)$, see \cite[Section 13c]{Seidel_book}. 

In order to keep track of signs in the $\Aoo$--morphisms $\mu^i$, we use the same framework as \cite{Lekili-Perutz-PNAS}. Equip each vanishing cycle $V_i$ with a marked point $\star$ which does not coincide with any of the intersection points of $V_i$ with the other $V_j$. This encodes the non-trivial spin structures: take the double-cover of $V_i$ that is trivial over $V_i \backslash \{ \star \}$, and swaps the two covering sheets at $\star$.

The $\Aoo$--structure is defined by requiring strict unitality, with units $e_j$, and by obtaining the other $\Aoo$--operations by counting holomorphic polygons as follows. Given points $x_{k+1} \in V_{i_k} \pitchfork V_{i_{k+1}}$, where $k=0, \ldots, d-1$, and $i_0 < i_1 < \ldots < i_{d}$, we have:
\bq
\mu^d(x_d, x_{d-1}, \ldots, x_1 ) 
= \sum_{x_{i_0} \in V_{i_0} \pitchfork V_{i_d} } \nu (x_{i_0}; x_{i_1}, \ldots, x_{i_d}) x_{i_0}.
\eq
The integer $\nu (x_{i_0}; x_{i_1}, \ldots, x_{i_d})$ is a signed count of immersed holomorphic polygons. More precisely, we count homotopy classes of maps:
\bq
u: \{ z \in \C \textrm{ \, with \,} |z| \leq 1 \textrm{\, and \,} z \neq e^{2\pi i /(d+1)} \} \to M
\eq
such that $u$ is orientation-preserving, its image has convex corners, and, for all $k = 0, \ldots, d$,  the arc $\left\{ e^{2 \pi i t / (d+1)} \, \middle| \, t \in \left(\frac{k}{d+1}, \frac{k+1}{d+1}\right)  \right\}$ gets mapped to $V_k$ in such a way that $$\textrm{lim}_{z \to e^{2 \pi i k /(d+1)} }u(z)= x_{i_k}.$$
Following \cite{Lekili-Perutz-PNAS}, the sign attached to an immersion is $(-1)^{a+b+c}$, where
\begin{itemize}
\item  $a$ is $i (x_{i_0}) + i(x_{i_d})$ if the image of the  (positively oriented) boundary of the polygon traverses $V_{i_d}$ in the negative direction, and zero otherwise;
\item $b$ is the sum of $i(x_{i_k})$ for all $k \in \{ 1, \ldots, d \}$ such that the boundary of the polygon traverses $V_{i_k}$ negatively;
\item $c$ is the number of stars in the boundary of the image of the polygon. 
\end{itemize}

Let $R$ be the semi-simple ring $\C e_1 \oplus \ldots \oplus \C e_m$, where the $e_i$ are degree zero generators such that $e_i e_j = \delta_{ij} e_i$. An $\Aoo$--category with $m$ ordered objects, such as $\Fuk^{\to}(\Pi)$,  is the same data as an $\Aoo$--algebra over $R$; we shall use the same notation for both.

\subsubsection{Distinguished basis of vanishing cycles}

We choose the distinguished configuration of vanishing paths given by Figure \ref{fig:Vcyclesbaseorder}; this will turn out to be the most natural one to make the comparison with coherent sheaves on $Y_{p,q,r}$. They are ordered clockwise, and we choose to start at the dashed black line.

\begin{figure}[htb]
\begin{center}
\includegraphics[scale=0.40]{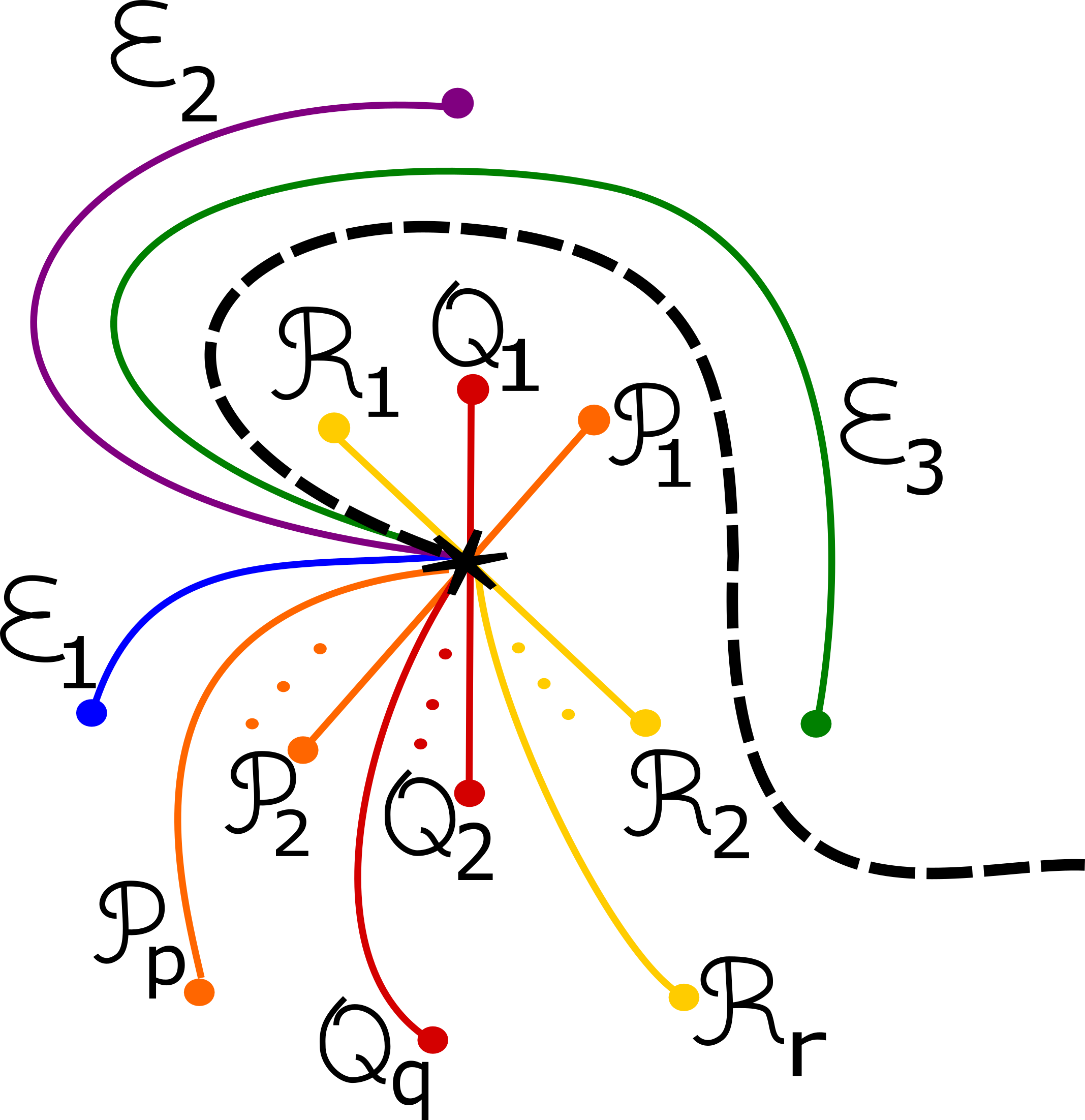}
\caption{Distinguished basis of vanishing paths for $\pi$.}
\label{fig:Vcyclesbaseorder}
\end{center}
\end{figure}

 Let $M$ be the fibre above the distinguished smooth value $\star$. The vanishing paths induce vanishing cycles in $M$, which we label as 
\bq
\cR_1, \cQ_1, \cP_1, \cR_2, \ldots, \cR_r, \cQ_2, \ldots, \cQ_q, \cP_2, \ldots, \cP_p, \E_1, \E_2, \E_3
\eq 
as in Figure \ref{fig:Vcyclesbaseorder}. 
There are no intersection points between a $\cP_i$ and a $\cQ_j$, or $\cQ_i$ and $\cR_j$, or $\cP_i$ and $\cR_j$. Thus, after ``trivial'' mutations (where the two vanishing cycles involved are disjoint), we can use the following as our ordered distinguished collection of vanishing cycles:
\bq \label{eq:vanishing_cycles}
\cP_1, \ldots, \cP_p, \cQ_1, \ldots, \cQ_q, \cR_1, \ldots, \cR_r, \E_1, \E_2, \E_3.
\eq 
On the fibre $M$, these are given by the curves of Figure \ref{fig:3puncturedellipticcurve6cycles}. 
Note that the curves $\cP_j$, $j=1,\ldots, p$ are all Hamiltonian isotopic to each other; similarly for the $\cQ_j$ and for the $\cR_j$.

\begin{figure}[htb]
\begin{center}
\includegraphics[scale=0.35]{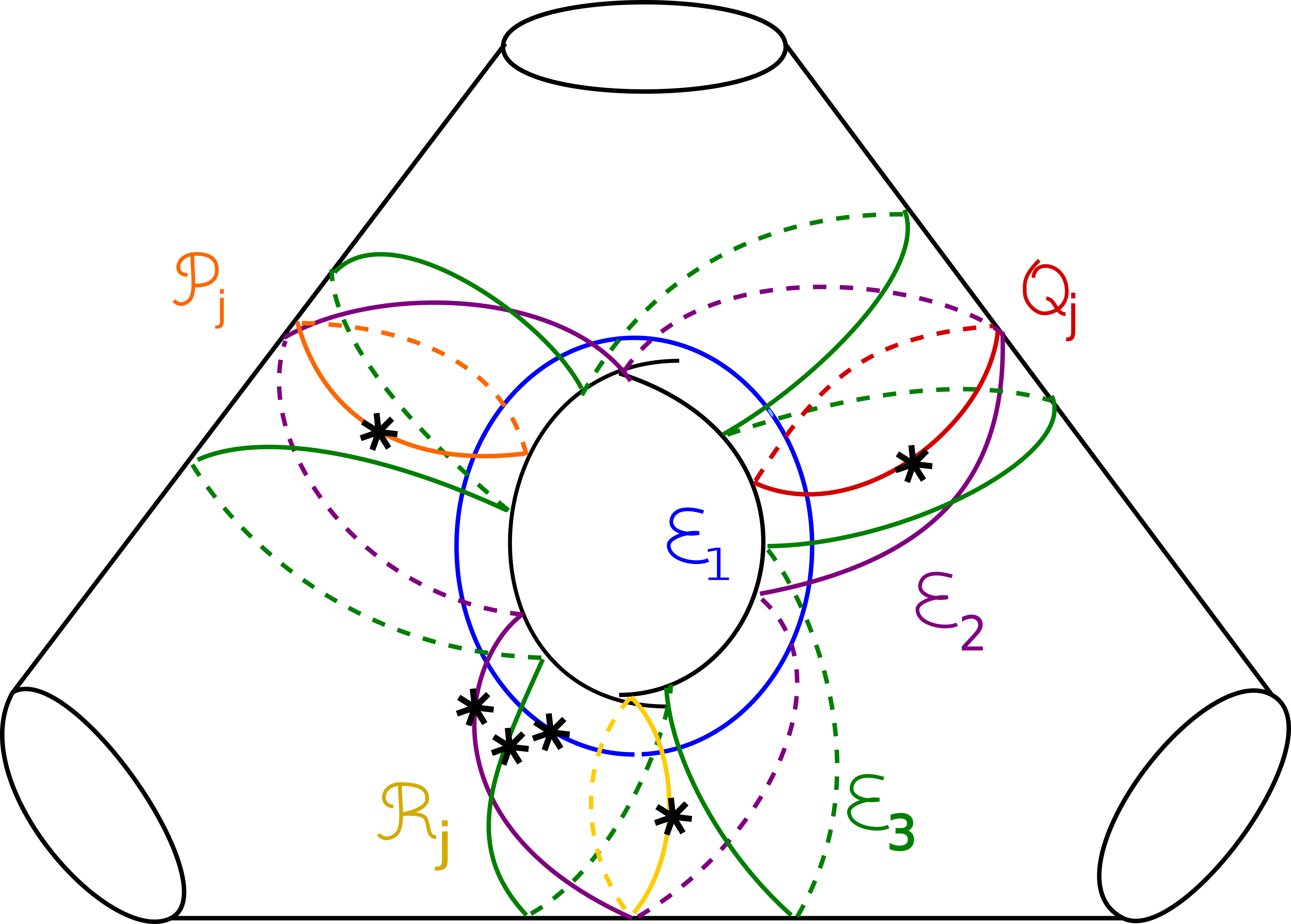}
\caption{Basis of vanishing cycles for $\pi$, in the fibre $M$, and marked points for recording spin structures.}
\label{fig:3puncturedellipticcurve6cycles}
\end{center}
\end{figure}

\subsubsection{Lagrangian branes}\label{sec:fibration_branes}

We fix Hamiltonian deformations as follows: pick small deformations for the vanishing cycles $\cP_1, \ldots, \cP_p$ as in Figure \ref{fig:A_ntailintersections}, and ``nest'' these choices  into the elliptic curve $M$ following Figure \ref{fig:M5vcycles}. (This figure only shows $\cQ_1$, $\cQ_2$ and $\cQ_3$, but by choosing sufficiently small deformations, one can certainly fit them all.) We decorate each of the vanishing cycles with a marked point $\star$ recording the spin structure; see Figures \ref{fig:3puncturedellipticcurve6cycles}, \ref{fig:A_ntailintersections} and \ref{fig:M5vcycles}.
Where they will be relevant, we also record orientations.

The fibre $M$, as a Riemann surface, carries a natural orientation. Moreover, its tangent bundle is trivial.
Pick a trivialization $TM \cong M \times \C$ such that  the squared phase map $\alpha$ satisfies $\alpha (T\cP_1|_x) = 1$ for all $x \in \cP_1$, and similarly for $\cQ_1$ and $\cR_1$. 
We assign to $\cP_1$, $\cQ_1$ and $\cR_1$ the constant zero grading function. All of the $\cP_j$ are Hamiltonian isotopic to $\cP_1$; we equip them with the induced grading. In particular, for $i \neq j$, $\cP_i$ and $\cP_j$ intersect in two points, with gradings $0$ and $1$. Similarly for the $\cQ_j$ and the $\cR_j$. 
We assign to $\E_1$ a grading function such that the intersection point between $\cP_j$ and $\E_1$ has degree zero, for any $j=1,\ldots, p$. (Note we can pick our trivialization so that the squared phase map is identically $-1$ on $\E_1$.)
Then the intersection point between $\cQ_j$ and $\E_1$ and the one between $\cR_j$ and $\E_1$ both also have degree zero, again for all possible $j$. The cycles $\E_2$ and $\E_3$ are both results of Dehn twists of $\E_1$ in $\cP_1$, $\cQ_1$ and $\cR_1$ (one of each for $\E_2$ and two of each for $\E_3$). Assigning them the induced gradings ensures that the intersection points between any of the $\E_i$ and any of the $\cP_j$, $\cQ_j$ or $\cR_j$ have degree zero. Moreover, any of the intersection points between any two of the $\E_i$ also has degree zero. 

\begin{center}
\emph{In an abuse of notation, we denote our choices of Lagrangian branes by $\cP_1, \ldots, \E_3$ rather than $\cP_1^\#, \ldots, \E_3^\#$ in the hope that this will improve legibility.}
\end{center}

\begin{figure}[htb]
\begin{center}
\includegraphics[scale=0.35]{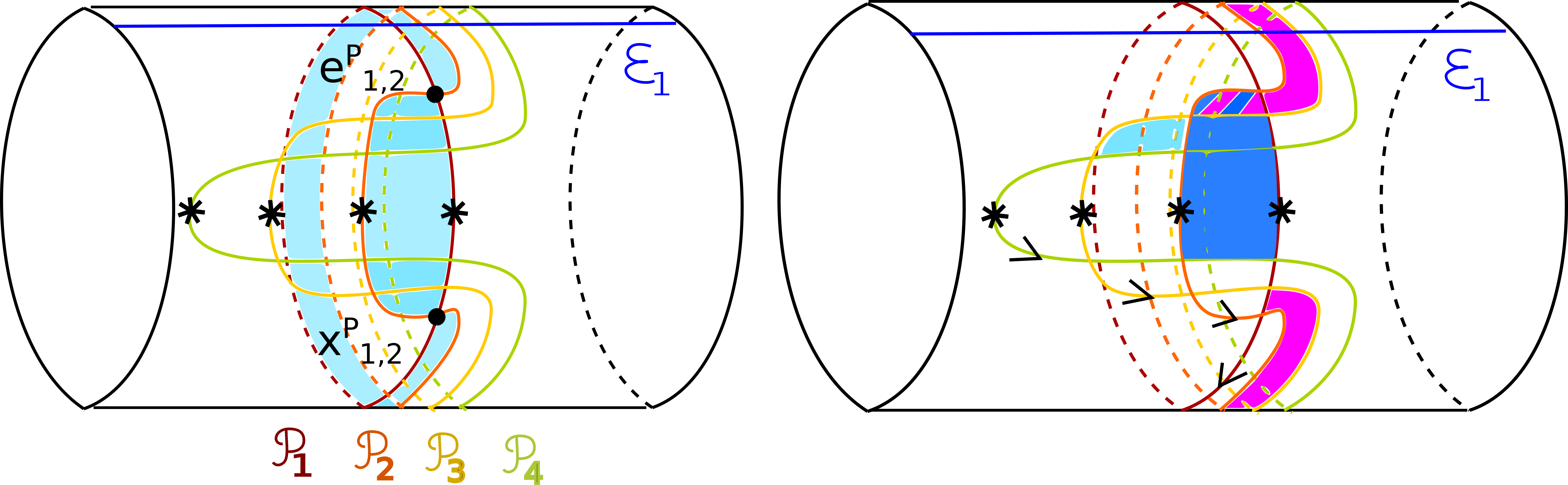}
\caption{Choice of Hamiltonian deformations for the vanishing cycles $\cP_1, \ldots, \cP_p$, marked points on the cycles, orientations, and some holomorphic discs and polygons between them. They are shown on a subset of $M$, with the segments of $\E_2$ and $\E_3$ omitted.}
\label{fig:A_ntailintersections}
\end{center}
\end{figure}

\subsubsection{Holomorphic polygons and $\Aoo$--structure.}

Let $\A_\F = \Fuk^{\to} (\Xi)$ be the $\Aoo$--category induced by our choices of Hamiltonian deformations. It has the following feature. 

\begin{lemma}\label{lem:Ainfty_structure}
The category $\A_\F$ is minimal and formal: the only non-trivial $\Aoo$--structure map is $\mu^2$. This means that viewed as an $\Aoo$--algebra over $R$, $\A_\F$ is simply a graded $R$--algebra.
\end{lemma}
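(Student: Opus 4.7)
The plan is to exploit the fully combinatorial description of the Fukaya category of a surface: every $\Aoo$--operation in $\A_\F$ is a signed count of immersed holomorphic polygons with convex corners on the three-punctured elliptic curve $M$, which reduces to a purely topological enumeration once the vanishing cycles and marked points have been fixed as in Figures \ref{fig:3puncturedellipticcurve6cycles}, \ref{fig:A_ntailintersections} and \ref{fig:M5vcycles}. My argument splits cleanly into verifying minimality $\mu^1=0$ and then vanishing of $\mu^d$ for all $d\geq 3$.

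For minimality, directedness already reduces the question to ordered pairs $V_i < V_j$. By inspection of the chosen Hamiltonian deformations, no bigons exist between cycles in different families: the $\cP_i$, $\cQ_j$ and $\cR_k$ are pairwise disjoint, each meets every $\E_\ell$ in a single point, and all intersections between the $\E_\ell$'s are of degree $0$, so there are no candidate Floer differentials for $\mu^1$ except within the isotopic families $\{\cP_i\}$, $\{\cQ_j\}$, $\{\cR_k\}$. For any such pair there are exactly two bigons whose corners are of degrees $0$ and $1$. Using the Lekili--Perutz sign rule together with the placement of the marked points $\star$ fixed in Figure \ref{fig:A_ntailintersections}, I would check that these two bigons contribute with opposite signs and therefore cancel, yielding $\mu^1 = 0$.

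For formality, by directedness any non-zero contribution to $\mu^d$ requires a strictly increasing sequence $V_{i_0} < V_{i_1} < \ldots < V_{i_d}$, and the gradings of Section \ref{sec:fibration_branes} force all Floer intersection points to lie in degree $0$ or $1$. The core step is a direct combinatorial classification of immersed $(d+1)$-gons with convex corners and the required ordered boundary labels on $M$ for $d \geq 3$. Three observations keep this enumeration tractable: (i) curves in a common family are mutually Hamiltonian isotopic, so whenever a candidate polygon has two consecutive arcs in the same family the relevant contributions cancel by the same spin-structure sign argument used in minimality; (ii) distinct families among $\{\cP\}$, $\{\cQ\}$, $\{\cR\}$ are pairwise disjoint on $M$, so any ordered polygon linking them must alternate via one of the $\E_\ell$; (iii) each $\E_\ell$ meets each $\cP_i, \cQ_j, \cR_k$ and each other $\E_m$ in exactly one degree-$0$ point.

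The main obstacle I expect is carefully ruling out polygons that involve the three cycles $\E_1, \E_2, \E_3$, since these are the only cycles capable of knitting the three $A_n$--tails together into a higher polygon. To handle them, I would lift the picture to the universal cover of $M$: every immersed polygon downstairs becomes an honest embedded polygon upstairs, and the description of $\E_2$, $\E_3$ as iterated Dehn twists of $\E_1$ in $\cP_1, \cQ_1, \cR_1$ yields very explicit lifts. A case analysis on how a $(d+1)$-gon can cycle through the three ``tail'' regions and the central configuration should then show that for $d \geq 3$ the polygon count is either empty or cancels in signed pairs, giving the desired formality and completing the proof that $\A_\F$ is minimal and equal to a graded $R$--algebra.
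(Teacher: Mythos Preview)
Your minimality argument is essentially the same as the paper's: the only bigons lie within a single $A_n$ tail, and the two bigons between $e^{\cP}_{i,j}$ and $x^{\cP}_{i,j}$ cancel because exactly one crosses the marked point.

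The formality argument, however, has a genuine gap. Your observation (i) --- that consecutive boundary arcs in the same family produce contributions which ``cancel by the same spin-structure sign argument used in minimality'' --- is not justified and, as stated, is not correct. The two cancelling bigons in the $\mu^1$ computation have \emph{identical} inputs and outputs and differ only by which side of the cylinder they traverse; for a $(d{+}1)$-gon with $d\geq 3$, the remaining corners pin down the boundary, and there is no mechanism that produces a partner polygon with the same inputs and output but opposite sign. The paper does \emph{not} argue by cancellation here: it shows that no valid polygon exists at all. The key step you are missing is a grading argument --- for a sequence $\cP_{i_1},\ldots,\cP_{i_l},\E_{k_1},\ldots$, the output lies in a degree-$0$ group, so every input between consecutive $\cP_{i_k}$'s is forced to be the degree-$0$ generator $e^{\cP}_{i_k,i_{k+1}}$; one then checks by direct inspection of the $A_n$ local model (Figure~\ref{fig:A_ntailintersections}) that any candidate polygon with such corners acquires a \emph{concave} corner at some $e^{\cP}_{i_k,i_{k+1}}$ and is therefore excluded. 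The same concavity inspection, together with contractibility of the boundary loop, disposes of the residual case of one $\cP_i$ followed by $\E_1,\E_2,\E_3$.

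Your universal-cover proposal is a legitimate alternative to the paper's direct inspection on $M$, but you do not actually carry it out, and absent the grading restriction above the case analysis would be substantially larger than you suggest. If you want to repair the argument, drop the cancellation heuristic in (i), insert the grading constraint forcing degree-$0$ corners within a tail, and then either do the concave-corner inspection as in the paper or execute the universal-cover enumeration explicitly.
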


We will explicitly describe all of the holomorphic discs and triangles. In particular, we will describe the $\mu^2$ product structure; this is summarised in Lemma \ref{lem:full_quiver}. We will also show that there are no convex holomorphic $n$-gons, for $n >3$, contributing to the $\Aoo$--operations. Lemma \ref{lem:Ainfty_structure} will follow as an immediate consequence.

\begin{remark} We know  that $\E_2$ is the result of performing a single positive Dehn twist in each of $\cP_1$, $\cQ_1$ and $\cR_1$ on $\E_1$; iterating once gives $\E_3$. However, we choose to understand $\Aoo$--operations geometrically rather than by manipulating iterated cones. This has the double advantage of readily yielding a model for the category which is minimal, and of by-passing the complications resulting from the fact that the Fukaya category of $M$ is not formal (see \cite{Lekili-Perutz-PNAS}; there is a full embedding of the Fukaya category of the once punctured torus into $\Fuk(M)$). 
\end{remark}

\subsubsection*{Holomorphic discs}
For every $i, j$ with $1 \leq i < j \leq p$, we have
\bq
CF^\ast (\cP_i, \cP_j) \cong \C\langle e^{\cP}_{i,j} \rangle \oplus \C \langle x^{\cP}_{i,j} \rangle
\eq
where $e^{\cP}_{i,j}$ is a generator in degree 0, and $x^{\cP}_{i,j}$ is a generator in degree one. (See  Figure \ref{fig:A_ntailintersections}.) There are two holomorphic discs between $e^{\cP}_{i,j} $ and $x^{\cP}_{i,j}$, with cancelling signs. These are shaded in blue on the left-hand side of Figure \ref{fig:A_ntailintersections}. Thus $CF^\ast (\cP_i, \cP_j)$ has $\mu^1 = 0$. Similarly for pairs of $\cQ_j$ and of $\cR_j$, with intersection points labelled as $e^{\cQ}_{i,j} $, $x^{\cQ}_{i,j}$, $e^{\cR}_{i,j} $ and $x^{\cR}_{i,j}$ respectively. With our choices of vanishing cycles, by inspection, there are no other holomorphic bigons. Thus the $\Aoo$--structure on $\Fuk^{\to} (\Xi)$ is minimal.

\subsubsection*{Holomorphic triangles}

We label the remaining intersection points as follows: $y_{i,k}^{\cP} \in CF^\ast (\cP_i, \E_k)$ denotes the unique intersection point of $\cP_i$ and $\E_k$, and similarly for $y_{i,k}^{\cQ} \in CF^\ast (\cQ_i, \E_k)$ and $y_{i,k}^{\cR} \in CF^\ast (\cR_i, \E_k)$. The intersection points between $\E_1$ and $\E_2$ are labeled as $a_1$, $a_2$ and $a_3$; those between $\E_2$ and $\E_3$, as $c_1$, $c_2$ and $c_3$; and those between $\E_1$ and $\E_3$, as $b_1$, $b_{1,2}$, $b_2$, $b_{2,3}$, $b_3$ and $b_{3,1}$. See Figure \ref{fig:Mholotriangles} for details.
As all intersection points have index zero, we don't need to keep track of orientations of curves to get the signs of the $\Aoo$--products appearing in that figure, so we do not record orientations. (Similarly with subsequent pictures.)

\begin{figure}[htb]
\begin{center}
\includegraphics[scale=0.35]{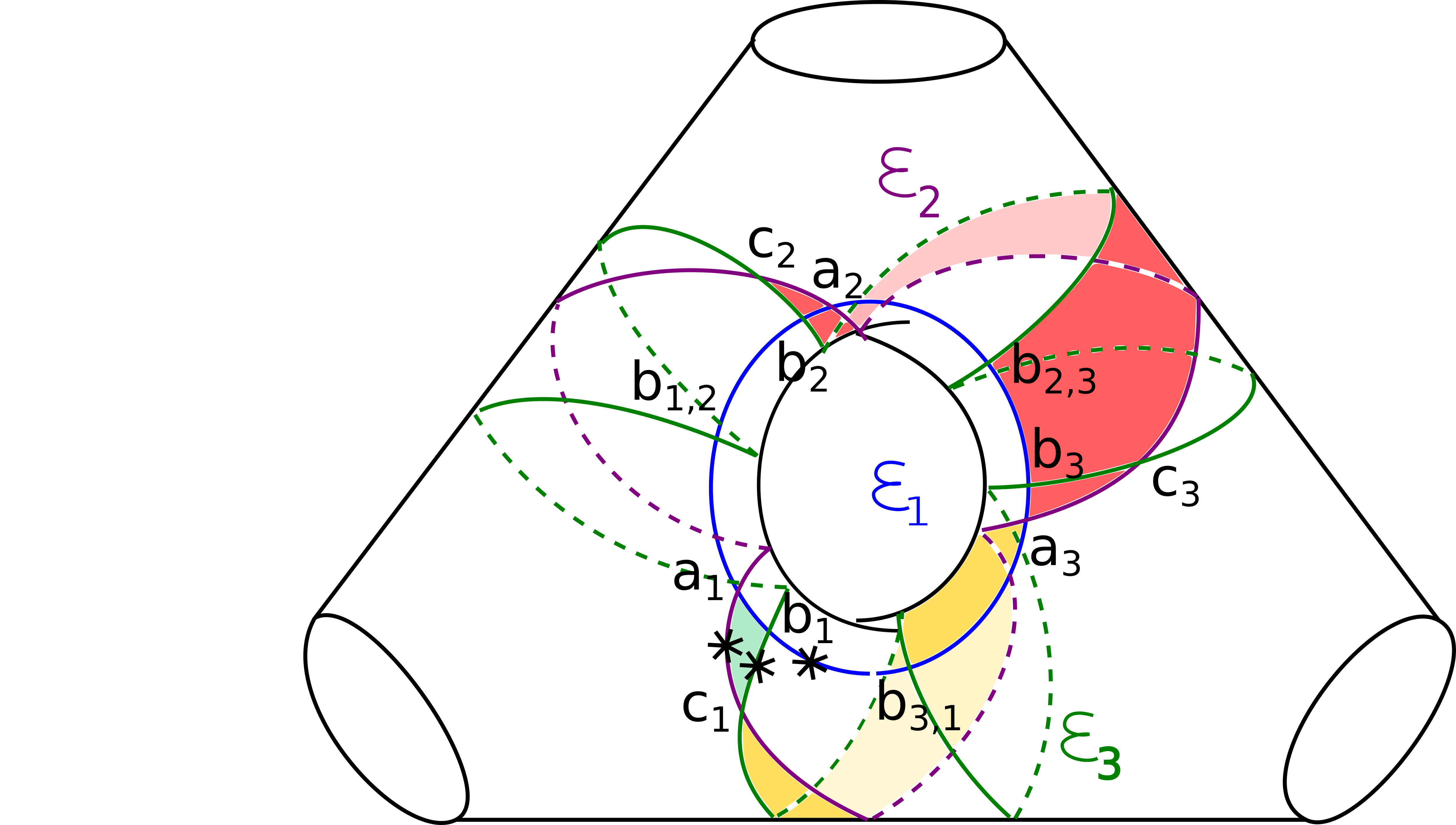}
\caption{Examples of holomorphic triangles involving $\E_1, \E_2$ and $\E_3$, and labels for the intersection points between them.}
\label{fig:Mholotriangles}
\end{center}
\end{figure}

Let us start with $\mu^2$ products between the $\E_i$. Because of the ordering, the only possibility is the product $CF^\ast (\E_2, \E_3) \otimes CF^\ast (\E_1, \E_2 ) \to CF^\ast (\E_1, \E_3 )$. We read off the following:
\bq
\mu^2 (c_i, a_i) = b_i \qquad \mu^2 (c_i, a_{i-1}) = b_{i-1,i} \qquad \mu^2(c_{i-1}, a_i) = b_{i-1,1}
\eq
where indices $i$ are considered mod $3$. Figure \ref{fig:Mholotriangles} gives an example of a holomorphic triangle for each of the three cases. Equivalently, the full subcategory on $\{ \E_1, \E_2, \E_3 \}$ is given by the quiver with relations:
\bq \label{eq:P^2quiver}
\xymatrix{
\bullet  \ar[r]^{a_2} \ar@/^1.5pc/[r]^{a_1}  \ar@/_1.5pc/[r]^{a_3}  & \bullet   \ar[r]^{c_2} \ar@/^1.5pc/[r]^{c_1}  \ar@/_1.5pc/[r]^{c_3} & \bullet
} 
\text{\, \, where \,}
\begin{cases} a_i c_{i+1} = a_{i+1} c_i \,\, (= b_{i, i+1} )\\ (a_i c_i = b_i ) 
\end{cases}
\eq

Next, let us consider the products involving one $\cP_i$ (or $\cQ_i$, or $\cR_i$), and two of the $\E_j$. 
Figure \ref{fig:Mholotriangles2} shows holomorphic triangles involved in the following:
\begin{align}
\mu^2: \,\, & CF( \E_1, \cP_j) \otimes CF (\E_3, \E_1) \to CF (\E_3, \cP_j) \\
\mu^2: \,\, & CF( \E_1, \cQ_j) \otimes CF (\E_2, \E_1) \to CF (\E_2, \cQ_j) \\
\mu^2: \,\, & CF( \E_2, \cR_j) \otimes CF (\E_3, \E_2) \to CF (\E_3, \cR_j) 
\end{align}
In the case of $(\E_1, \E_2)$ and $(\E_2, \E_3)$, there are two triangles contributing to the product; in the case of $(\E_1, \E_3)$, there are three. These give the $\mu^2$ operations:
\begin{align}
& \mu^2 (y^{\cP}_{j,1} , b_{1} ) = \mu^2 (y^{\cP}_{j,1} , b_{1,2} ) =  \mu^2 (y^{\cP}_{j,1} , b_{2} )  = y^{\cP}_{j,3} \\
& \mu^2 (y^{\cQ}_{j,1} , a_2 ) = \mu^2 (y^{\cQ}_{j,1} , a_3 )  =  y^{\cQ}_{2,j} \\
& \mu^2 (y_{j,2}^{\cR} , c_3) = \mu^2 (y_{j,2}^{\cR} , c_1) = y^{\cR}_3 
\end{align}
where the signs follow from the formula at the end of Section \ref{sec:directed_Fukaya_background}. 
The six remaining products are analogous:
\begin{align}
\mu^2 (y^{\cP}_{j,1} , a_1 )  =  \mu^2 (y^{\cP}_{j,1} , a_2 )  =   y^{\cP}_{j,2}  \qquad 
\mu^2 (y^{\cP}_{j,2} , c_1 ) =  \mu^2 (y^{\cP}_{j,2} , c_2 ) =  y^{\cP}_{j,3} \\
\mu^2 (y^{\cQ}_{j,1} , b_{2} ) =\mu^2 (y^{\cQ}_{j,1} , b_{2,3} )  = \mu^2 (y^{\cQ}_{j,1} , b_{3} )  = y^{\cQ}_{j,3}  \qquad
\mu^2 (y^{\cQ}_{j,2} , c_2 )  = \mu^2 (y^{\cQ}_{j,2} , c_3)  =  y^{\cQ}_{j,3} \\
\mu^2 (y^{\cR}_{j,1} , b_{3} ) = \mu^2 (y^{\cR}_{j,1} , b_{3,1} ) = \mu^2 (y^{\cR}_{j,1} , b_1 )  = y^{\cR}_{j,3} \qquad \mu^2 (y^{\cR}_{j,1} , a_3 )  = \mu^2 (y^{\cR}_{j,1} , a_1 ) =  y^{\cR}_{j,2} 
\end{align}
In words, for any $i$, the intersection point of $\cP_i$ with $\E_1$, respectively $\E_2$, has a non-zero product with the generators $a_j, b_j, b_{j,k}$, respectively $c_j$, for $j , k \in \{1,2\}$, and said product is the distinguished generator in the target chain complex. For the $\cQ_i$, it is for $j,k \in \{2,3\} $, and for the $\cR_i$, $j,k \in \{ 1,3 \}$. (The reader may also wish to look ahead to Lemma \ref{lem:full_quiver} for a more synthetic description.)
 
\begin{figure}[htb]
\begin{center}
\includegraphics[scale=0.35]{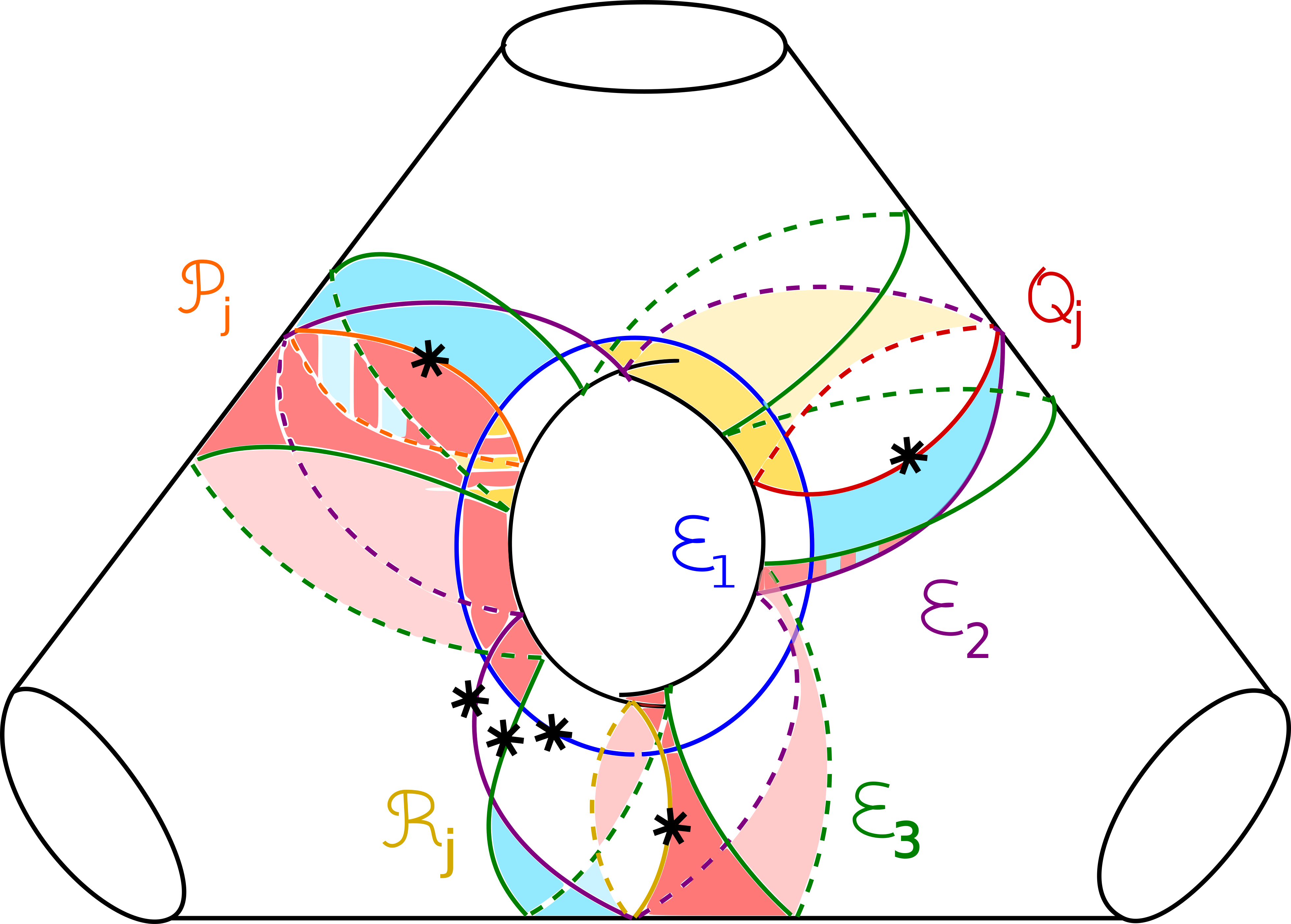}
\caption{Examples of holomorphic triangles counting products between $\cP_j$, $\cQ_j$ or $\cR_j$ and the $\E_i$.
}
\label{fig:Mholotriangles2}
\end{center}
\end{figure}

Now consider products between $\cP_{i_1}$, $\cP_{i_2}$ and $\E_k$; for ordering reasons, the only possibility is 
$CF^\ast (\cP_{i_2} , \E_k) \otimes CF^\ast (\cP_{i_1}, \cP_{i_2}) \to CF^\ast (\cP_{i_1}, \E_k)$, 
for $i_1 < i_2$. By inspection and considering gradings, we see that the only such product is
\bq
\mu^2 (y_{i_2, k}^{\cP}, e_{i_1, i_2}^{\cP} ) = y_{i_1, k}^{\cP}.
\eq
See Figure \ref{fig:M5vcycles} for an example of a relevant holomorphic triangle.
\begin{figure}[htb]
\begin{center}
\includegraphics[scale=0.35]{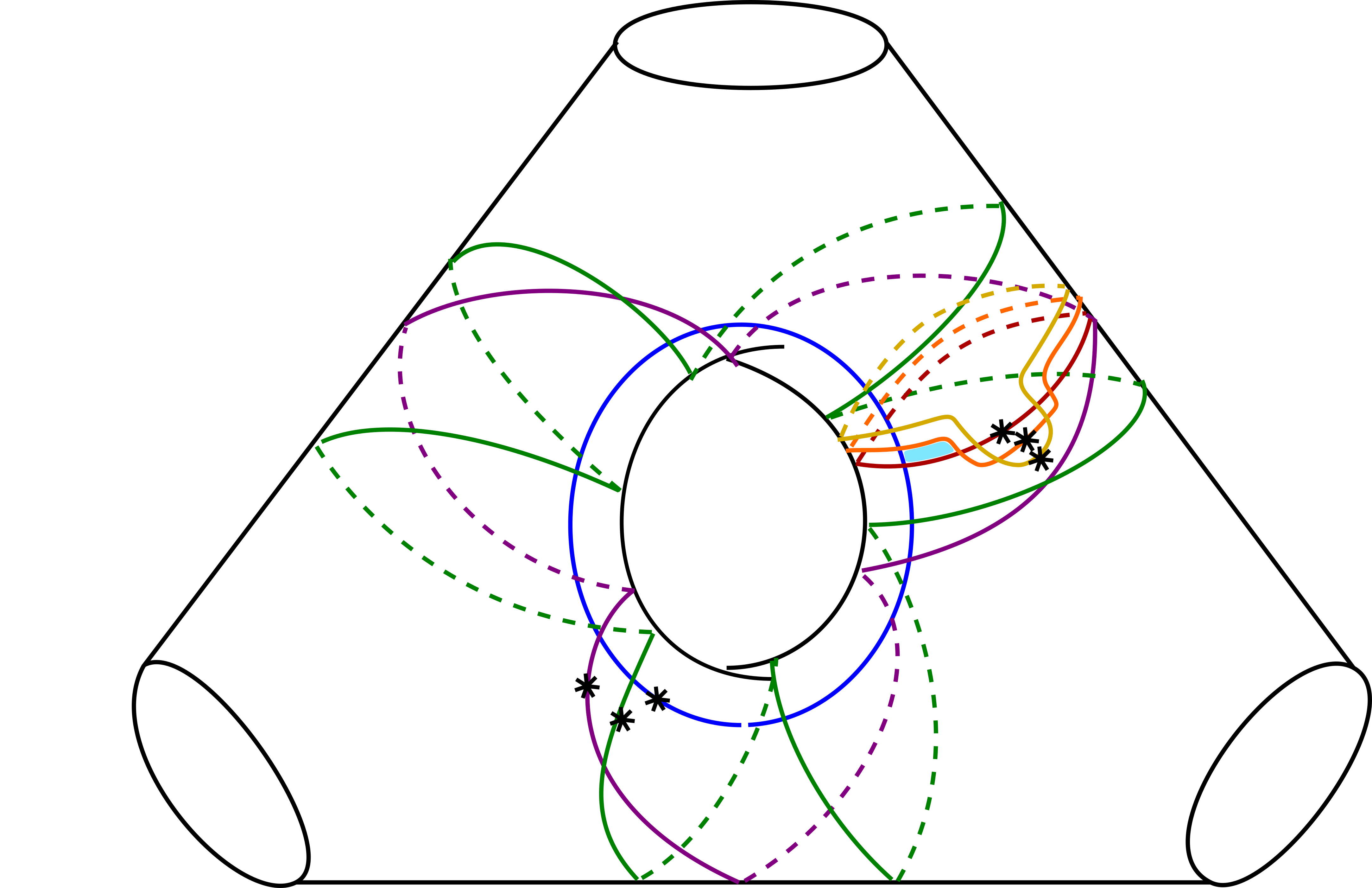}
\caption{Nesting each $A_n$ tail on the 3-punctured ellipctic curve $M$ (only $\cQ_1$, $\cQ_2$ and $\cQ_3$ shown), and an example of a holomorphic triangle.}
\label{fig:M5vcycles}
\end{center}
\end{figure}

Consider products between $\cP_{i_1}, \cP_{i_2}$ and $\cP_{i_3}$,
 i.e.~maps $\mu^2: \, CF^\ast (\cP_{i_2}, \cP_{i_3}) \otimes CF^\ast (\cP_{i_1}, \cP_{i_2}) \to 
CF^\ast (\cP_{i_1}, \cP_{i_3}) $, for $i_1 < i_2 < i_3$. Again, the only possibility comes from ``multiplying by the unit'':
\bq
\mu^2 (e_{i_2, i_3}^{\cP} , e_{i_1, i_2}^{\cP} ) = e_{i_1, i_3}^{\cP}
\qquad
\mu^2 (x_{i_2, i_3}^{\cP} , e_{i_1, i_2}^{\cP} ) = x_{i_1, i_3}^{\cP}
\qquad
\mu^2 (e_{i_2, i_3}^{\cP} , x_{i_1, i_2}^{\cP} ) = - x_{i_1, i_3}^{\cP}.
\eq
Instances of relevant holomorphic triangles are given on the right-hand side of Figure \ref{fig:A_ntailintersections}. Similarly for the $\cQ_i$ and $\cR_i$.

As there are no intersection points between any $\cP_i$ and $\cQ_j$ (and, similarly, $\cQ_i$ and $\cR_j$, or $\cP_i$ and $\cR_j$), there are no other possibilities for products.

\begin{remark}\label{rmk:P^2model}
 After a mutation, the ordered collection $\{ \E_1, \E_2, \E_3 \} $ becomes $\{ L_1 = \E_1, L_2 = \tau_{\E_2} \E_3, L_3 = \E_2 \}$, as in Figure \ref{fig:MVCMcycles}. These are precisely the vanishing cycles for the ``standard'' Lefschetz fibration $(\C^\ast)^2 \to \C$, $(x,y ) \mapsto x+y+\frac{1}{xy}$, as given e.g.~in \cite[Figure 2]{Seidel_mvcm}. These choices lead to the quiver description of \cite[Proposition 3.2]{Seidel_mvcm}. 
  (See also \cite[Section 6]{Seidel_subalgebras}.)
\begin{figure}[htb]
\begin{center}
\includegraphics[scale=0.25]{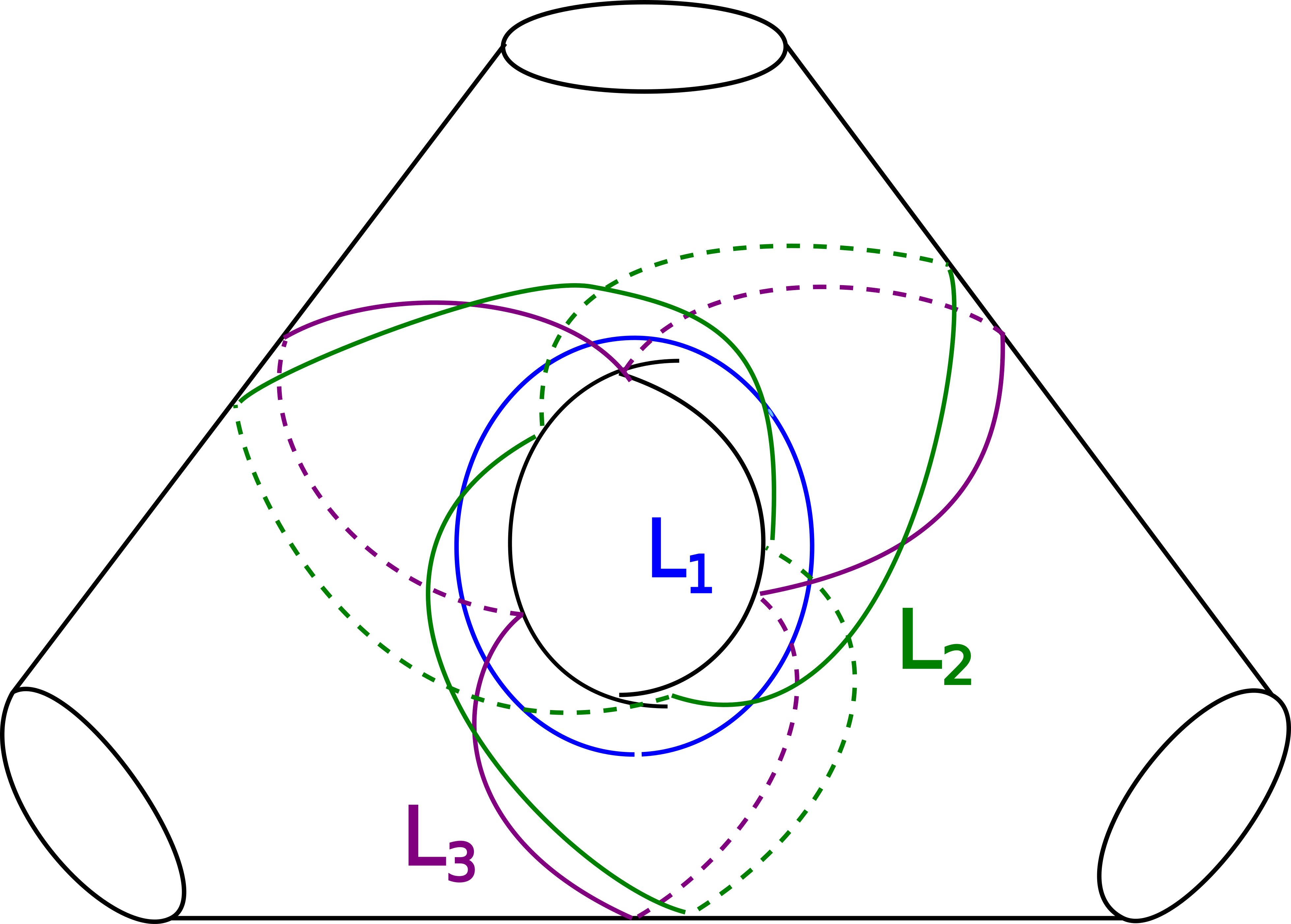}
\caption{The vanishing cycles $L_1, L_2$ and $L_3$.}
\label{fig:MVCMcycles}
\end{center}
\end{figure}
\end{remark}

\subsubsection*{Holomorphic polygons with more than three edges}

We claim that there are no convex holomorphic $n$-gons  contributing to $\mu^{n-1}$ for $n >3$. First, notice that no such $n$-gon can have boundary on both a $\cP_i$ and a $\cQ_j$: as these don't intersect, one would have to use two  of the $\E_k$ to get between them (or the same one twice), and the order of the Lagrangians on the boundary of the $n$-gon would violate the order of the distinguished vanishing cycles. (Similarly for $n$-gons involving, say, both a $\cQ_i$ and a $\cR_j$.)

By inspection of Figure \ref{fig:A_ntailintersections}, there are no convex holomorphic $n$-gons with boundary only on the $\cP_i$ and for which the distinguished order of the vanishing cycles is respected. 

It remains to exclude homolorphic $n$-gons with order boundary segments on $\cP_{i_1}, \cP_{i_2}, \ldots, \cP_{i_l}$ (some $i_1 < i_2 < \ldots < i_n$), followed by at least one of the $\E_i$. (Similarly for sequences of $\cQ_i$ and $\cR_i$.) By grading considerations, all of the elements in $hom(\cP_{i_k}, \cP_{i_{k+1}})$ must be $e^{\cP}_{i_k, i_{k+1}}$. 
Consider first the case of a sequence ending with a unique $\E_i$; see e.g.~Figure \ref{fig:A_ntailintersections} for the case of $\E_1$, the others having an identical local model.  By inspection, the only holomorphic polygons whose boundary segments respect the order of the vanishing cycles have at least one concave corner, at an $e^{\cP}_{i_k, i_{k+1}}$, which excludes them from consideration.
Now, look at sequences ending with two or three of the $\E_i$; see Figure \ref{fig:Mholoquadrilateral} for an example. 
If there are two of more $\cP_j$ at the start of the sequence, the previous considerations on concave corners still apply -- -- see e.g.~the yellow region in Figure \ref{fig:Mholoquadrilateral}. 
This leaves the possibility of a $\mu^3$ product between a $\cP_i$ and all three of the $\E_i$. 
These can be excluded by inspection, by combining the fact that each of the corners of the polygons must be convex with the fact that its boundary must be a closed contractible loop -- see e.g.~the blue polygon (with a concave corner) in  Figure \ref{fig:Mholoquadrilateral}. 

\begin{figure}[htb]
\begin{center}
\includegraphics[scale=0.35]{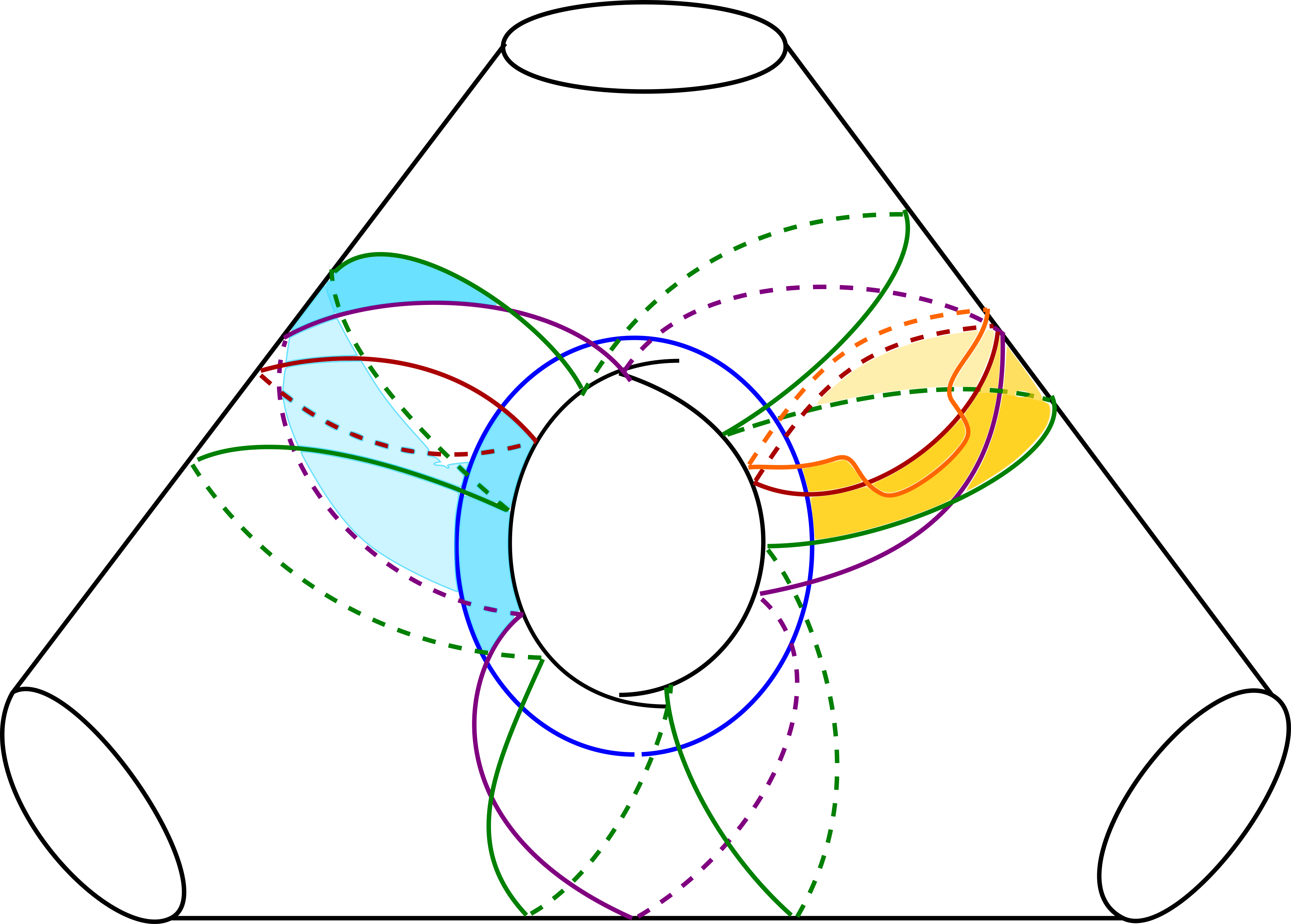}
\caption{Holomorphic quadrilateral between $\cP_j, \cP_{j+k}, \E_1, \E_3$ with a convex corner (yellow), and between $\cQ_i$, $\E_1$, $\E_2$ and $\E_3$ (blue). 
}
\label{fig:Mholoquadrilateral}
\end{center}
\end{figure}

This concludes the proof of Lemma \ref{lem:Ainfty_structure}. 

\begin{remark} The Fukaya category of the two-variable $A_1$ Milnor fibre (topologically, a cylinder) is intrinsically formal. One could use this, together with inclusions of the type considered later in Section \ref{sec:Fuk_restriction}, to deduce that for some quasi-isomorphic copy of $\mathcal{A}_\F$, there are no $\mu^k$ products between the $\cP_i$, for any $k >2$ (ditto $\cQ$ and $\cR$). However, as the Fukaya category of $M$ itself is not formal, this would not readily give formality of $\A_\F$. 
\end{remark}

Finally, we record the structure of $\A_\F$ as a graded $R$--algebra.

\begin{lemma}\label{lem:full_quiver}
The product structure of $\A_\F$ is given by the following quiver algebra, where we are ignoring the unit $e_i \in hom(V_i, V_i)$, for all Lagrangian branes $V_i$ (and products involving it):
\begin{equation*}
\xymatrix@C=3.8em@R=4em{
\cP_1 \ar@/^1pc/[r]^-{x^{\cP}_{1,2}} \ar@/_1pc/[r]^-{e^{\cP}_{1,2}} & \cP_2 \ar@/^1pc/[r]^-{x^{\cP}_{2,3}} \ar@/_1pc/[r]^-{e^{\cP}_{2,3}} & \cP_3 \ar@{.>}@/^1pc/[r] \ar@{.>}@/_1pc/[r] &  \cP_{p-1} \ar@/^1pc/[r]^-{x^{\cP}_{p-1,p}} \ar@/_1pc/[r]^-{e^{\cP}_{p-1,p}} & \cP_p  \ar@/_0pc/[dr]^-{y^{\cP}_{p,1}} & &  &
 \\
\cQ_1 \ar@/^1pc/[r]^-{x^{\cQ}_{1,2}} \ar@/_1pc/[r]^-{e^{\cQ}_{1,2}} & \cQ_2 \ar@/^1pc/[r]^-{x^{\cQ}_{2,3}} \ar@/_1pc/[r]^-{e^{\cQ}_{2,3}} & \cQ_3 \ar@{.>}@/^1pc/[r] \ar@{.>}@/_1pc/[r] &  \cQ_{q-1} \ar@/^1pc/[r]^-{x^{\cQ}_{p-1,p}} \ar@/_1pc/[r]^-{e^{\cQ}_{q-1,q}} & \cQ_q  \ar@/_0pc/[r]^-{y^{\cQ}_{q,1}}
 & \E_1  \ar[r]^{a_2} \ar@/^1.5pc/[r]^{a_1}  \ar@/_1.5pc/[r]^{a_3}  & \E_2   \ar[r]^{c_2} \ar@/^1.5pc/[r]^{c_1}  \ar@/_1.5pc/[r]^{c_3} & \E_3
\\
\cR_1 \ar@/^1pc/[r]^-{x^{\cR}_{1,2}} \ar@/_1pc/[r]^-{e^{\cR}_{1,2}} & \cR_2 \ar@/^1pc/[r]^-{x^{\cR}_{2,3}} \ar@/_1pc/[r]^-{e^{\cR}_{2,3}} & \cR_3 \ar@{.>}@/^1pc/[r] \ar@{.>}@/_1pc/[r] &  \cR_{r-1} \ar@/^1pc/[r]^-{x^{\cR}_{p-1,p}} \ar@/_1pc/[r]^-{e^{\cR}_{r-1,r}} & \cR_r  \ar@/_0pc/[ur]_-{y^{\cR}_{r,1}} & &  &
} 
\end{equation*}
subject to the relations
\begin{eqnarray}
x^{\cP}_{i,i+1} \cdot e^{\cP}_{i+1, i+2} &= & -e^{\cP}_{i,i+1} \cdot x_{i+1,i+2}^{\cP} \\
x^{\cP}_{p-1,p} \cdot y^{\cP}_{p,1} &=& 0 \\
 a_i \cdot c_{i+1} & = & a_{i+1} \cdot c_i \\
 y^{\cP}_{p,1} \cdot a_3 = 0 \quad  y^{\cQ}_{q,1} \cdot a_1 & = & 0  \quad y^{\cR}_{r,1} \cdot a_2 = 0 
\end{eqnarray}
$$ \text{Products with an \,}x^{\cP}_{i,i+1} \text{\,are zero except when all other elements are $e^{\cP}_{l,l+1}$.} $$
Similarly for $\cQ$ and $\cR$, for those relations above only involving $\cP$. 
All the other morphisms are the products of ones in the quiver. For instance, $x^{\cP}_{1,2} \cdot e^{\cP}_{2,3} = x^{\cP}_{1,3}$, and $e^{\cP}_{i, i+1} \cdot \ldots \cdot e^{\cP}_{p-1,p} \cdot y^{\cP}_{p,1} = y^{\cP}_{i, 1}$. Also, for instance $y^{\cP}_{p,1} \cdot a_1 = y^{\cP}_{p,1} \cdot a_2 = y^{\cP}_{p,2}$. Finally, as before 
$ a_i \cdot c_{i+1} =  b_{i, i+1} $ and $a_i \cdot c_i = b_i$. 
\end{lemma}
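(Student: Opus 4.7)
The statement is an assembly of the computations carried out in the preceding subsections: by Lemma~\ref{lem:Ainfty_structure}, only $\mu^2$ is non-trivial, so $\A_\F$ is determined as a graded associative $R$-algebra by a presentation in terms of generators and relations. The plan is to verify, in order: (a) that the listed arrows generate every non-trivial Hom space; (b) that the products among these generators are as displayed; and (c) that the stated relations are exhaustive.

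For step (a), the Floer complexes $hom^\ast(\cP_i, \cP_j)$ with $i<j$ are two-dimensional, with a degree-$0$ generator $e^{\cP}_{i,j}$ and a degree-$1$ generator $x^{\cP}_{i,j}$, as displayed in Figure~\ref{fig:A_ntailintersections}; each $hom^\ast(\cP_i, \E_k)$ is one-dimensional, spanned by $y^{\cP}_{i,k}$; and the full subcategory on $\{\E_1, \E_2, \E_3\}$ is already recorded in \eqref{eq:P^2quiver}, with the six classes $b_j, b_{j, j+1}$ appearing as products $a_i c_i = b_i$ and $a_i c_{i+1} = b_{i,i+1}$. The $\cQ$ and $\cR$ analogues are identical. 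In particular every Hom space is spanned by products of consecutive quiver arrows; for example $y^{\cP}_{i,k}$ with $i<p$ factors as $e^{\cP}_{i,i+1}\cdots e^{\cP}_{p-1,p}\cdot y^{\cP}_{p,1}$, followed by appropriate $a_j$'s (and $c_j$'s if $k=3$).

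For (b) and (c), the relations come in four flavours. The signed rule $x^{\cP}_{i,i+1}\cdot e^{\cP}_{i+1,i+2}=-e^{\cP}_{i,i+1}\cdot x^{\cP}_{i+1,i+2}$ is read off the two triangles on the right of Figure~\ref{fig:A_ntailintersections}, whose boundary orientations and crossings of the spin markers $\star$ give opposite signs under the rule at the end of Section~\ref{sec:directed_Fukaya_background}. The vanishing $x^{\cP}_{p-1,p}\cdot y^{\cP}_{p,1}=0$ is forced by grading, since $hom^\ast(\cP_{p-1}, \E_1)$ is concentrated in degree $0$. The $\P^2$-relation $a_i c_{i+1}=a_{i+1}c_i$ was already recorded in \eqref{eq:P^2quiver}. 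The relations $y^{\cP}_{p,1}\cdot a_3=y^{\cQ}_{q,1}\cdot a_1=y^{\cR}_{r,1}\cdot a_2=0$ were directly observed in Figure~\ref{fig:Mholotriangles2}, whose enumeration exhibits $\mu^2(y^{\cP}_{p,1},\cdot)$ as non-zero only for $a_1$ and $a_2$, and cyclically for $\cQ, \cR$. Finally, the catch-all clause that products involving any $x^{\cP}_{i,i+1}$ vanish unless every other factor is an $e^{\cP}_{l,l+1}$ is again a grading statement: the target Hom spaces are either concentrated in degree $0$ (for $(\cP_i,\E_k)$) or have no classes in degree $\geq 2$ (for $(\cP_i,\cP_j)$), so any product of total degree $>1$, or of degree $1$ into a target of pure degree $0$, must vanish.

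I do not foresee a serious obstacle: the one point requiring care is the sign rule of Section~\ref{sec:directed_Fukaya_background}, whose application requires keeping track of the boundary orientation of each triangle and of how many markers $\star$ each boundary traverses. Once the individual products are signed correctly, associativity of $\mu^2$ propagates the generating relations to all longer compositions in the quiver, and the full presentation of $\A_\F$ follows.
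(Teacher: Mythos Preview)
Your proposal is correct and matches the paper's approach: the paper presents Lemma~\ref{lem:full_quiver} as a summary (``Finally, we record the structure of $\A_\F$ as a graded $R$--algebra'') of the holomorphic disc, triangle, and higher-polygon computations carried out immediately before it, without a separate proof. Your write-up simply makes this assembly explicit, and your observations that the relations $x^{\cP}_{p-1,p}\cdot y^{\cP}_{p,1}=0$ and the catch-all vanishing clause are forced by grading are exactly how the paper treats such products (``By inspection and considering gradings'').
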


\subsection{$D^b \Coh (Y_{p,q,r})$: semi-orthogonal decomposition and product structure}

\subsubsection{Preliminaries}

Our convention is that unless otherwise stated, given any map $f: X \to Y$ of algebraic varieties, $f^\ast$ will denote the \emph{left-derived} pull-back map, and $f_\ast $ will denote the \emph{right-derived} push-forward map. We start by recalling the definitions we shall use, and refer the reader to the textbook \cite{Huybrechts} for further background.

\begin{definition}\cite[Definition 1.59]{Huybrechts}
A semi-orthogonal decomposition of a $\C$--linear triangulated category  $\mathcal{D}$ is a sequence of subcategories $\mathcal{D}_1, \ldots, \mathcal{D}_n$ such that:
\begin{itemize}
\item 
 Let $\mathcal{D}_j^{\perp}$ be the right-orthogonal to  $\mathcal{D}_j$, i.e.~the full subcategory with objects $C$ such that for all objects $D$ in $\mathcal{D}_j$, one has $Hom(D,C)=0$. Then for all $0 \leq i < j \leq n$, we have that $\mathcal{D}_i \subset \mathcal{D}_j^{\perp}$.
\item The sequence of subcategories $\mathcal{D}_i$ generates $\mathcal{D}$. This means that the smallest full subcategory of $\D$ containing all of the $\D_i$ is equivalent to $\D$ itself.
\end{itemize}
\end{definition}

\begin{definition}\cite[Definition 1.57]{Huybrechts}
An object $E$ in a $\C$--linear triangulated category $\D$ is exceptional if 
\bq
\text{Hom} (E, E[l]) = 
\begin{cases}
\C \text{\, if \,} l=0 \\
0 \text{\, otherwise}
\end{cases}
\eq
An exceptional sequence is a sequence $E_1, \ldots, E_n$ of exceptional objects such that $Hom(E_i, E_j[l]) = 0$ for all $i >j$ and all $l$. It is called full if $\D$ is generated by the $E_i$, i.e.~the smallest full subcategory of $\D$ containing all of the $E_i$ is $\D$ itself. 
\end{definition}

We will use the following particular case of a theorem of Bondal and Orlov.
\begin{theorem} \cite{Orlov}, \cite[Theorem 4.2]{Bondal-Orlov} \label{thm:BO}
Let $X$ be a smooth algebraic surface, and $x$ a point of $X$. Let $\widetilde{X}$ denote the blow-up of $X$ at $x$, and let $E$ be the associated exceptional divisor. We have the following semi-orthogonal decomposition of the derived category of coherent sheaves on $\widetilde{X}$:
\bq
D^b \Coh (\widetilde{X}) = \langle i_{\ast} \mathcal{O}_E (-1) , \pi^{\ast} D^b \Coh(X)           \rangle
\eq
where $i: E \hookrightarrow \widetilde{X}$ is the inclusion map, $\pi: \widetilde{X} \to X$ the blow-down map, and $ i_{\ast}$ and $\pi^{\ast}$ the resulting right, resp.~left, derived push-forward and pull-back maps. Moreover, $\pi^\ast$ gives a fully faithful embedding of $D^b \Coh(X)$ into $D^b \Coh(\widetilde{X})$. 
\end{theorem}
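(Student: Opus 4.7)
The result is of Bondal--Orlov type, so the plan is to verify the three standard conditions for a semi-orthogonal decomposition with the two components $\mathcal{D}_1 = \langle i_\ast \mathcal{O}_E(-1)\rangle$ and $\mathcal{D}_2 = \pi^\ast D^b\Coh(X)$: full faithfulness of $\pi^\ast$; the orthogonality $\mathrm{RHom}(\pi^\ast F, i_\ast \mathcal{O}_E(-1)) = 0$ for all $F \in D^b \Coh(X)$; and generation of $D^b\Coh(\widetilde{X})$ by the two pieces. The pivotal ingredient throughout is the identity $R\pi_\ast \mathcal{O}_{\widetilde{X}} = \mathcal{O}_X$, which one deduces from the theorem on formal functions together with $H^{>0}(E, \mathcal{O}_E) = 0$, using that $E \cong \mathbb{P}^1$ and $X$ is smooth at $x$.

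Steps one and two are essentially formal manipulations with adjunctions. For full faithfulness, the projection formula combined with $R\pi_\ast \mathcal{O}_{\widetilde{X}} = \mathcal{O}_X$ yields $R\pi_\ast \pi^\ast G \cong G$, so
\begin{equation*}
\mathrm{RHom}_{\widetilde{X}}(\pi^\ast F, \pi^\ast G) \cong \mathrm{RHom}_X(F, R\pi_\ast \pi^\ast G) \cong \mathrm{RHom}_X(F, G).
\end{equation*}
For the orthogonality, the adjunction $Li^\ast \dashv Ri_\ast$ reduces the computation to $\mathrm{RHom}_E(Li^\ast \pi^\ast F, \mathcal{O}_E(-1))$. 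Since $\pi \circ i$ is the constant map to $x \in X$, the complex $Li^\ast \pi^\ast F$ is quasi-isomorphic to the derived fibre $F|_x^L$ pulled back trivially to $E$, hence a (shifted) direct sum of copies of $\mathcal{O}_E$. The vanishing then follows from $R\Gamma(\mathbb{P}^1, \mathcal{O}(-1)) = 0$.

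The heart of the argument, and the main obstacle, is generation. The plan is to first observe that for any $\mathcal{F} \in D^b \Coh(\widetilde{X})$, the adjunction counit $\pi^\ast R\pi_\ast \mathcal{F} \to \mathcal{F}$ is an isomorphism away from $E$, so its cone lies in the full subcategory $D^b_E(\widetilde{X})$ of complexes with cohomology set-theoretically supported on $E$. It therefore suffices to show that $D^b_E(\widetilde{X})$ is contained in the triangulated subcategory $\langle \mathcal{D}_1, \mathcal{D}_2\rangle$. A d\'evissage using the nilpotent filtration by powers of $\mathcal{I}_E$ reduces this further to showing that every line bundle $i_\ast \mathcal{O}_E(k)$, $k \in \mathbb{Z}$, belongs to $\langle \mathcal{D}_1, \mathcal{D}_2\rangle$. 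One then proceeds inductively using the short exact sequences
\begin{equation*}
0 \to \mathcal{O}_{\widetilde{X}}(-jE) \to \mathcal{O}_{\widetilde{X}}(-(j-1)E) \to i_\ast \mathcal{O}_E(j-1) \to 0,
\end{equation*}
starting from $i_\ast \mathcal{O}_E(-1) \in \mathcal{D}_1$ and $\mathcal{O}_{\widetilde{X}} = \pi^\ast \mathcal{O}_X \in \mathcal{D}_2$. The induction simultaneously produces $\mathcal{O}_{\widetilde{X}}(kE)$ for all $k$ and all twists $i_\ast \mathcal{O}_E(k)$ as iterated cones; the main technical delicacy is bootstrapping past the fact that $\mathcal{O}_{\widetilde{X}}(kE)$ for $k \neq 0$ is not itself a pullback from $X$, which is handled by combining the sequences above with Beilinson's observation that $\{\mathcal{O}_E(-1), \mathcal{O}_E\}$ split-generates $D^b \Coh(E)$ via the Euler sequence on $\mathbb{P}^1$.
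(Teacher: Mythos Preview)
The paper does not prove this theorem at all: it is quoted from the literature (Orlov, Bondal--Orlov) and used as a black box. So there is nothing to compare against, and your outline is essentially the standard textbook proof.

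That said, your generation step has a genuine (if easily repaired) gap. You propose to start the induction from only the two objects $i_\ast\O_E(-1)\in\D_1$ and $\O_{\widetilde X}=\pi^\ast\O_X\in\D_2$, and then obtain all $\O_{\widetilde X}(kE)$ and all $i_\ast\O_E(k)$ by combining the exact sequences
\[
0 \to \O_{\widetilde X}(-jE) \to \O_{\widetilde X}(-(j-1)E) \to i_\ast\O_E(j-1) \to 0
\]
with the Euler sequence on $E$. But this does not close. Applying $R\pi_\ast$ kills $i_\ast\O_E(-1)$ and sends $\O_{\widetilde X}$ to $\O_X$, so the thick subcategory $\langle \O_{\widetilde X},\, i_\ast\O_E(-1)\rangle$ lands under $R\pi_\ast$ inside $\langle \O_X\rangle\subset D^b\Coh(X)$. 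Since $R\pi_\ast i_\ast\O_E=\O_x$ and $\O_x\notin\langle\O_X\rangle$ for a general surface $X$, the object $i_\ast\O_E$ (equivalently $\O_{\widetilde X}(-E)$) cannot be reached from your two starting objects, no matter how you combine the sequences with Beilinson on $E$.

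The fix is simply to remember that $\D_2=\pi^\ast D^b\Coh(X)$ contains more than $\O_{\widetilde X}$. For instance $L\pi^\ast\O_x\in\D_2$, and a direct Koszul computation gives a triangle
\[
i_\ast\O_E(-1)[1] \to L\pi^\ast\O_x \to i_\ast\O_E,
\]
so $i_\ast\O_E\in\langle\D_1,\D_2\rangle$ immediately; together with $i_\ast\O_E(-1)$ this already generates all of $i_\ast D^b\Coh(E)$, and hence $D^b_E(\widetilde X)$, by your d\'evissage. With that one extra input, the rest of your argument goes through.
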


To distinguish between uses of the full statement above versus those of its final sentence (which is comparatively elementary), we record the latter as a stand-alone lemma.
\begin{lemma}\label{lem:ext_pullbacks}
 Let $X$ be a smooth projective variety, $\widetilde{X}$ the blow-up of $X$ at a point, and $\pi: \widetilde{X} \to X$ the blow-down map. Fix $E, F \in D^b \Coh (X)$. Then
\bq
\Ext_{\widetilde{X}}^i (\pi^\ast E, \pi^\ast F ) \cong \Ext_X^i (E, F).
\eq
\end{lemma}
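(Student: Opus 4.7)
The strategy is the standard adjunction-plus-projection-formula argument, reducing the claim to the well-known vanishing $R\pi_\ast \mathcal{O}_{\widetilde{X}} \cong \mathcal{O}_X$ for a blow-up of a smooth point.

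First I would rewrite the Ext groups as morphism spaces in the derived category and apply the adjunction $(\pi^\ast, R\pi_\ast)$:
\[
\Ext^i_{\widetilde{X}}(\pi^\ast E, \pi^\ast F) \;=\; \Hom_{D^b(\widetilde{X})}(\pi^\ast E, \pi^\ast F[i]) \;\cong\; \Hom_{D^b(X)}(E, R\pi_\ast \pi^\ast F[i]).
\]
Since $E$ is a perfect complex (as $X$ is smooth) and $\pi$ is proper, the projection formula applies and gives
\[
R\pi_\ast \pi^\ast F \;\cong\; F \otimes^{L}_{\mathcal{O}_X} R\pi_\ast \mathcal{O}_{\widetilde{X}}.
\]

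Next I would invoke the classical fact that $R\pi_\ast \mathcal{O}_{\widetilde{X}} \cong \mathcal{O}_X$. Since $\pi$ is birational and $X$ is smooth (in particular normal), Zariski's main theorem yields $\pi_\ast \mathcal{O}_{\widetilde{X}} = \mathcal{O}_X$. The higher direct images vanish because $R^j\pi_\ast \mathcal{O}_{\widetilde{X}}$ is supported at the blown-up point $x$, and by the theorem on formal functions (or direct computation) its stalk is controlled by $H^j(E,\mathcal{O}_E)$ where $E\cong\mathbb{P}^{n-1}$ with $n=\dim X$; this vanishes for $j>0$.

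Combining the two displays with $R\pi_\ast \mathcal{O}_{\widetilde{X}} \cong \mathcal{O}_X$ gives $R\pi_\ast \pi^\ast F \cong F$, so
\[
\Hom_{D^b(X)}(E, R\pi_\ast \pi^\ast F[i]) \;\cong\; \Hom_{D^b(X)}(E, F[i]) \;=\; \Ext^i_X(E, F),
\]
which is exactly the claim. The only nontrivial input is the vanishing $R\pi_\ast \mathcal{O}_{\widetilde{X}} \cong \mathcal{O}_X$, but this is a completely standard statement about blow-ups of smooth points and presents no genuine obstacle; all other steps are formal consequences of the adjunction and the projection formula.
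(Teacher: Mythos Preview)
Your argument is correct and is the standard proof that $\pi^\ast$ is fully faithful. One small remark: the perfectness comment you attach to $E$ is not what is needed for the projection formula step---it is $F$ (or the fact that $\pi$ has finite Tor-dimension, both varieties being smooth) that makes $R\pi_\ast \pi^\ast F \cong F \otimes^L R\pi_\ast \O_{\widetilde X}$ valid. This is harmless here since everything in $D^b\Coh(X)$ is perfect, but worth saying accurately.

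As for comparison with the paper: the paper does not give an independent proof of this lemma at all. It simply records the statement as the final sentence of the Bondal--Orlov theorem (Theorem~\ref{thm:BO})---namely, that $\pi^\ast$ is a fully faithful embedding---isolated as a stand-alone lemma for ease of reference. Your write-up is exactly the standard argument underlying that clause, so you have supplied what the paper takes as a black box from the cited source.
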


Note that $i_\ast \O_E(-1) $ is an exceptional object of $D^b \Coh(\widetilde{X})$. One can either show this from first principles using the standard short exact sequence 
\bq 0 \to \O_{\widetilde{X}}(-E) \to \O_{\widetilde{X}} \to i_\ast \O_E \to 0 \eq
and the push-pull lemma, or by applying \cite[Proposition 11.8]{Huybrechts}.

\subsubsection{Exceptional sequence of objects}

Let $Y_{p,q,r}$ be the projective variety obtained as the result of   $p+q+r$  (iterated) blow-ups of $\P^2$, described in the introduction. 
Let us recall the construction. 
Start with three collinear points,  each on a different component of the toric divisor on $\P^2$ -- for instance, $[1:-1:0], [0:1:-1]$ and $ [-1:0:1] $. 
Let $H$ be the complex line that they all belong to -- e.g. $u+v+w=0$, where $u$, $v$ and $w$ are the homogeneous coordinates on $\P^2$. Perform the following blow-ups:
\begin{itemize}
\item Blow up the point $[1:-1:0]$. The resulting exceptional divisor, say $E_{\cP, p}$, intersects the strict transform of the toric divisor in one point. Blow up that point. The new exceptional divisor, say $E_{\cP,p-1}$, in turn intersects the strict transform of the toric divisor in one point. Iterate to perform a total of $p$ blow ups. The strict transforms of exceptional divisors form a chain of $-2$ rational curves: $\widetilde{E}_{\cP, p}, \ldots, \widetilde{E}_{\cP, 2}$, with a single $-1$ rational curve at the end of it (intersecting $\widetilde{E}_{\cP, p-1}$): $E_{\cP, 1}$. (The reason for the decreasing labelling will become clear: it is for it to later agree with the order of elements of an exceptional sequence.)
\item Perform a completely analogous procedure with $[0:1:-1]$, with $q$ blow-ups. Call the resulting chain of strict transforms of exceptional divisors $\widetilde{E}_{\cQ, q}, \ldots, \widetilde{E}_{\cQ, 2}$; again, there is a single rational $-1$ curve at the end of the chain: $E_{\cQ, 1}$.
\item Similarly again for $[-1:0:1]$, with $r$ blow-ups. The chain of strict transforms of exceptional divisors will be denoted  $\widetilde{E}_{\cR, r}, \ldots, \widetilde{E}_{\cR, 2}$; there is a single $-1$ curve at the end of the chain, intersecting $\widetilde{E}_{\cR, r-1}$: $E_{\cR, 1}$.
\end{itemize}

\begin{figure}[htb]
\begin{center}
\includegraphics[scale=0.55]{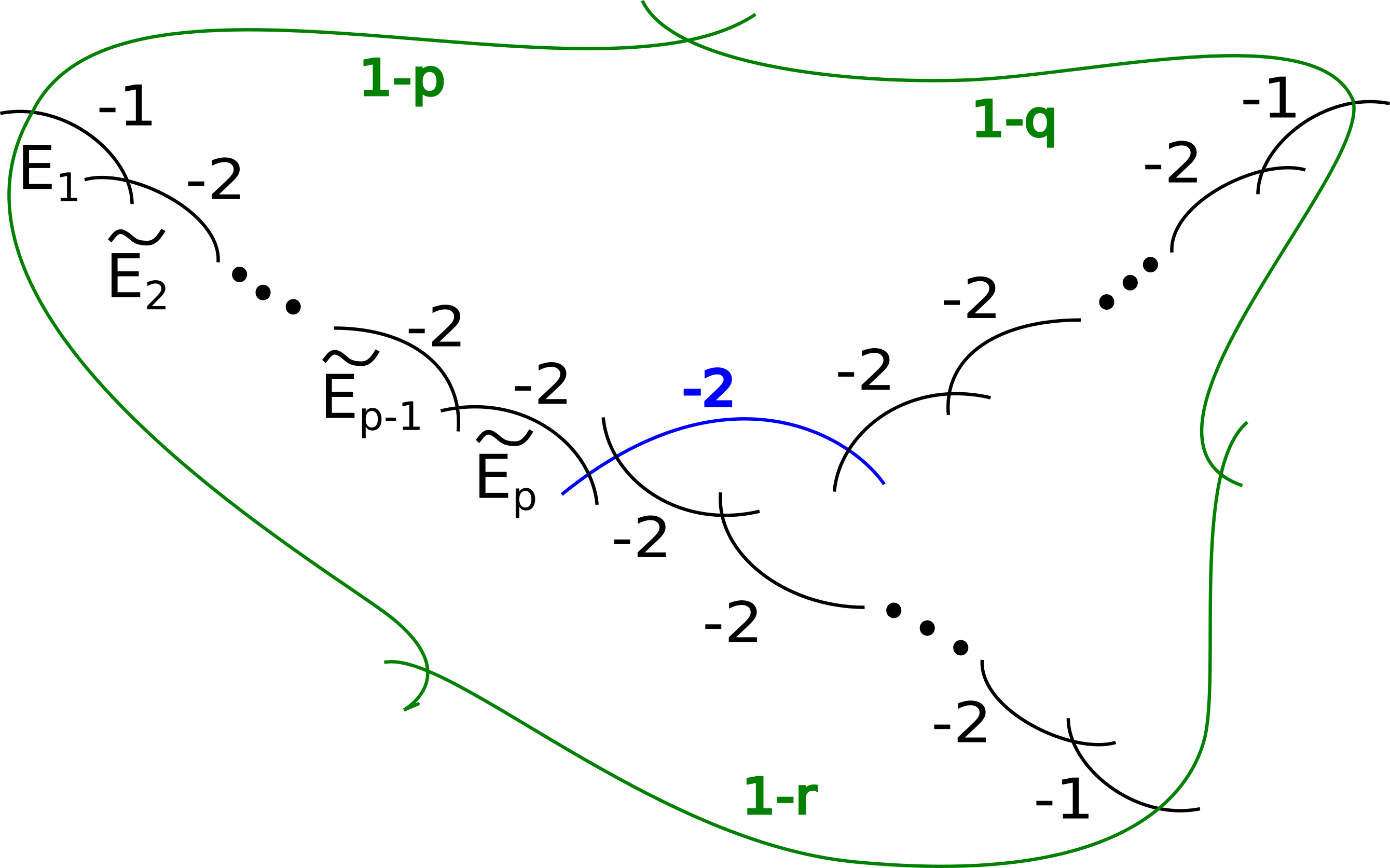}
\caption{Curves in $Y_{p,q,r}$, with their self-intersections: $\widetilde{H}$ in blue, $D$ in green, and the chains of strict transforms of exceptional divisors in black. For instance, $\widetilde{E}_i$ denotes $\widetilde{E}_{\cP, i}$.
}
\label{fig:-2curve}
\end{center}
\end{figure}

Denote by  $\widetilde{H}$  the strict transform of $H$, which is also a $-2$ curve, and by $D$ the strict transform of the toric anticanonical divisor $\{ uvw=0 \}$ on $\P^2$. 
The configuration of curves in $Y_{p,q,r}$ is given in Figure \ref{fig:-2curve}. 

\begin{definition}\label{def:exceptional_sheaves}
We introduce the following notation:
\bq
\D_{\cP, j} =
\begin{cases}
\pi^\ast i_\ast   \O_{E_{\mathcal{P},j}}(-1) & \text{if  } 2\leq j \leq p \\
 i_\ast   \O_{E_{\mathcal{P},j}}(-1)  & \text{if  } j=1
\end{cases}
\eq
Here $\pi$ is used to denote several different blow-downs (or sequences thereof): from $Y_{p,q,r}$ to the surface, say $\widetilde{X}$, that is the intermediate surface in the sequence of blow ups described above for which the new exceptional divisor is $E_{\cP, j}$.  $\O_E$ denotes the structure sheaf of $E$ in $D^b \Coh(E)$, and $i$  the inclusion $i: E_{\cP, j} \to \widetilde{X}$. 
We define $\D_{\cQ, j}$, for $1 \leq j \leq q$ and $\D_{\cR, j}$, for $1 \leq j \leq r$, analogously.
\end{definition} 

Theorem \ref{thm:BO} readily yields the following semi-orthogonal decomposition for $Y_{p,q,r}$:
\bq
\langle
\D_{\cP,1}, \D_{\cP,2} \ldots, \D_{\cP, p}, \D_{\cQ, 1}, \ldots, \D_{\cQ, q}, \D_{\cR, 1}, \ldots,  \D_{\cR, r}, \pi^\ast D^b \Coh(\P^2)
\rangle
\eq
An exceptional sequence for $D^b \Coh (\P^2)$ is given by $\O_{\P^2}, \O_{\P^2}(1), \O_{\P^2}(2)$ (Beilinson, see \cite[Corollary 8.29]{Huybrechts}). This yields the following.

\begin{lemma}\label{lem:exceptional_sequence}
A full exceptional sequence for $D^b \Coh(Y_{p,q,r})$ is given by:
\begin{multline}
\qquad \D_{\cP,1}, \D_{\cP,2} \ldots, \D_{\cP, p}, \D_{\cQ, 1}, \ldots, \D_{\cQ, q}, \D_{\cR, 1}, \ldots, \D_{\cR, r}, \\
\pi^{\ast} \O_{\P^2} ,
\pi^{\ast} \O_{\P^2}(1) ,
\pi^{\ast} \O_{\P^2}(2). \qquad
\end{multline}
\end{lemma}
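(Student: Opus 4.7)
The plan is to prove the claim by iterating Theorem \ref{thm:BO} along the sequence of $p+q+r$ blow-ups that produce $Y_{p,q,r}$ from $\P^2$, starting from Beilinson's full exceptional collection $\O_{\P^2}, \O_{\P^2}(1), \O_{\P^2}(2)$ on $\P^2$.

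First I would fix the chain of intermediate surfaces $\P^2 = X_0 \leftarrow X_1 \leftarrow \cdots \leftarrow X_{p+q+r} = Y_{p,q,r}$, where at stage $k$ one performs a single blow-up producing, in order, the exceptional divisors $E_{\cP,p}, E_{\cP,p-1}, \ldots, E_{\cP,1}, E_{\cQ,q}, \ldots, E_{\cQ,1}, E_{\cR,r}, \ldots, E_{\cR,1}$. At each step Theorem \ref{thm:BO} prepends $i_\ast \O_{E_k}(-1)$ to the existing SOD while pulling back all previous pieces under the new blow-down map. Lemma \ref{lem:ext_pullbacks}, iterated along the composition of all blow-downs, guarantees that these pulled-back objects retain their exceptionality and their semi-orthogonality relations, and that the pull-back of the Beilinson sequence remains exceptional on $Y_{p,q,r}$. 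With the conventions of Definition \ref{def:exceptional_sheaves}, the upshot is a full semi-orthogonal decomposition
\[
D^b \Coh(Y_{p,q,r}) = \langle \D_{\cR,1}, \ldots, \D_{\cR,r},\; \D_{\cQ,1}, \ldots, \D_{\cQ,q},\; \D_{\cP,1}, \ldots, \D_{\cP,p},\; \pi^\ast \O_{\P^2},\; \pi^\ast \O_{\P^2}(1),\; \pi^\ast \O_{\P^2}(2) \rangle.
\]
Within each chain the internal order is correct, since each application of Theorem \ref{thm:BO} prepends the new sheaf further to the left: thus $\D_{\cP, 1}$ (from the last $\cP$-blow-up) ends up leftmost in the $\cP$-block and $\D_{\cP, p}$ (from the first) rightmost, and similarly for $\cQ$ and $\cR$.

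The order between chains is, however, the reverse of the one stated in the lemma. To reconcile these, I would appeal to mutual orthogonality across chains: $\D_{\cP, j}$ is set-theoretically supported on the preimage under $Y_{p,q,r} \to \P^2$ of the point $[1:-1:0]$, while $\D_{\cQ, j}$ and $\D_{\cR, j}$ are supported on the preimages of $[0:1:-1]$ and $[-1:0:1]$ respectively. These three preimages are pairwise disjoint closed subschemes of $Y_{p,q,r}$, so $\Ext^\ast$-groups between objects of distinct chains vanish. Two consecutive mutually orthogonal objects in an exceptional sequence can be swapped, so the three blocks may be freely permuted; permuting them to the order $\cP, \cQ, \cR$ produces the sequence claimed in the lemma, and fullness is preserved since it was already ensured at each step by Theorem \ref{thm:BO}.

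The most delicate point, in my view, is matching the order between chains: a naive iteration of Theorem \ref{thm:BO} produces the opposite block ordering, and the disjoint-support observation is what rescues the argument. Once this is granted, the remaining content is bookkeeping, reducing to direct appeals to Theorem \ref{thm:BO}, Lemma \ref{lem:ext_pullbacks}, and Beilinson's theorem on $\P^2$.
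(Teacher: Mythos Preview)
Your proposal is correct and follows essentially the same approach as the paper: iterate Theorem~\ref{thm:BO} along the chain of blow-ups and then insert Beilinson's collection on $\P^2$. Your careful treatment of the block ordering via the disjoint-support argument is a detail the paper glosses over (it simply asserts the semi-orthogonal decomposition in the stated order), though the relevant vanishing $\Ext^\ast(\D_{\cP,j}, \D_{\cQ,k}) = 0$ is recorded just after the lemma.
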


\begin{definition}
Let $A_\text{C} \subset D^b \Coh (Y_{p,q,r})$ be the full subcategory on the $p+q+r+3$  objects of the full exceptional sequence. It can be viewed as an algebra over $R$, the same semi-simple ring as in Section \ref{sec:directed_Fukaya_background}.
\end{definition}

\begin{definition}
Let $A_\F $ be the cohomology category of $\A_\F$. As $\A_\F$ is minimal and formal, it is just given by changing signs for products in $\A_\F$:
\bq
b \cdot a = (-1)^{|a|} \mu^2_{\A_\F} (b,a).
\eq
\end{definition}

\begin{proposition}\label{prop:isomorphic_sequences} 
$A_\F$ and $A_\tC$ are isomorphic as $R$--algebras. 
 More precisely, consider the following map $\phi_\A$ from the objects of $\A_\F$ to the objects of $A_\tC$, respecting their ordering as exceptional sequences:
\begin{eqnarray}
\phi_\A ( \cP_i ) =  \D_{\cP, i}&\textrm{ \, for \, } i=1,2,\ldots, p \\
\phi_\A ( \cQ_i  ) = \D_{\cQ, i}&\textrm{ \, for \, } i=1,2,\ldots, q \\
\phi_\A ( \cR_i ) = \D_{\cR, i} &\textrm{ \, for \, } i=1,2,\ldots, r \\
\phi_\A (\E_i  )  = \pi^\ast \O_{\P^2} (-1+i) &\textrm{ \, for \, }i=1,2, \textrm{and \,} 3 
\end{eqnarray} 
Then for any objects $X$, $Y$ in $\A_\F$, there are isomorphisms
\bq
\phi_\A^{X,Y} : \, hom^\ast ( X , Y)  \to  \Ext^\ast (\phi_\A (X), \phi_A(Y)) 
\eq
which are compatible with the product structure on both sides:
\bq
 \phi_\A^{X,Z} \left( b \cdot a  \right) =  \phi_\A^{Y,Z} (b) \cdot \phi_\A^{X,Y} (a)
\eq
where the product on the left-hand side is in $A_\F$, and the right-hand side product is the Yoneda product:
\bq
\Ext^\ast (V_2,V_3) \otimes \Ext^\ast (V_1,V_2 ) \to \Ext^\ast (V_1,V_3).
\eq
\end{proposition}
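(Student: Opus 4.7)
The plan is to compute the graded $\Ext$-algebra $A_\tC$ explicitly and match it with the quiver presentation of $A_\F$ given in Lemma~\ref{lem:full_quiver}. Since Lemma~\ref{lem:exceptional_sequence} gives an exceptional sequence, the vanishings $\Ext^\ast(V_i,V_j)=0$ for $i>j$ are automatic, which already matches directedness on the Fukaya side. It thus suffices to compute the non-trivial half of the $\Ext$-groups and Yoneda products, and to verify that they realize the arrows and relations of Lemma~\ref{lem:full_quiver} under $\phi_\A$.

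The $\Ext$ computations split naturally into four blocks.
\emph{Beilinson block.} Lemma~\ref{lem:ext_pullbacks} reduces $\Ext^\ast_{Y_{p,q,r}}(\pi^\ast \O_{\P^2}(i-1),\pi^\ast \O_{\P^2}(j-1))$ to the classical Beilinson algebra on $\P^2$, which is exactly the quiver with relations $a_i c_{i+1}=a_{i+1} c_i$ of the sub-quiver \eqref{eq:P^2quiver}.
\emph{Chain blocks.} For each chain (say $\cP$), apply Lemma~\ref{lem:ext_pullbacks} on the intermediate surface $\widetilde{X}_j$ where $\D_{\cP,j}$ is born as the new exceptional-divisor sheaf $i_\ast \O_{E_{\cP,j}}(-1)$, with the standard resolution $[\O\to\O(E_{\cP,j})]$, while $\D_{\cP,i}$ for $i<j$ is viewed as a derived pull-back. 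The projection formula then computes the $\Ext^\ast(\D_{\cP,i},\D_{\cP,i+1})$ for consecutive indices, and one identifies its two generators with $e^\cP$ and $x^\cP$; Yoneda compositions handle the $|i-j|\geq 2$ case.
\emph{Chain--Beilinson block.} Compute $\Ext^\ast(\D_{\cP,p},\pi^\ast\O_{\P^2}(k))$ on $\widetilde{X}_1=\mathrm{Bl}_{[1:-1:0]}\P^2$ by closed-immersion adjunction, using that the relative dualizing sheaf of the $(-1)$-curve is $\O_{E_{\cP,p}}(-1)$; the resulting one-dimensional space is identified with $y^{\cP}_{p,k}$, and the interior generators $y^{\cP}_{j,k}$ arise by Yoneda composition with the $e^\cP$'s.
\emph{Cross-chain vanishing.} For $\Ext$ between $\D_{\cP,i}$ and $\D_{\cQ,j}$ (and permutations), Lemma~\ref{lem:ext_pullbacks} reduces to an intermediate surface on which the supports of the two sheaves are disjoint exceptional divisors, so the $\Ext$ vanishes.

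The final task is to verify that the Yoneda products satisfy the relations listed in Lemma~\ref{lem:full_quiver}: the Beilinson relations pull back from $\P^2$; the anti-commutativity $x^\cP e^\cP=-e^\cP x^\cP$ is traced to a sign in the connecting map of the $\Ext$ long exact sequence coming from $[\O\to\O(E)]$; and the vanishings such as $x^\cP_{p-1,p}\cdot y^\cP_{p,1}=0$ and $y^\cP_{p,1}\cdot a_k=0$ (for the relevant indices $k$) follow from explicit composition through the chosen resolutions together with support-based obstructions. The main obstacle will be keeping the generators and signs consistent across the chain--Beilinson interface: each composition must be traced through the chosen resolutions and adjunctions with compatible sign conventions, and I expect the isomorphism $\phi_\A$ to be pinned down by a handful of representative Yoneda calculations that fix the normalization of generators once and for all.
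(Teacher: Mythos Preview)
Your overall architecture matches the paper's: compute the $\Ext$-groups block by block, fix generators, then verify Yoneda products against the quiver of Lemma~\ref{lem:full_quiver}. Two points need correcting.

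\textbf{Indexing in the chain block.} In the construction of $Y_{p,q,r}$ the divisor $E_{\cP,p}$ is created \emph{first} and $E_{\cP,1}$ \emph{last}. Thus for $i<j$ the divisor $E_{\cP,i}$ appears on a surface obtained by \emph{further} blow-ups beyond the one where $E_{\cP,j}$ is born, so $\D_{\cP,i}$ is not a pull-back from your $\widetilde X_j$; it is the other way round. The correct reduction via Lemma~\ref{lem:ext_pullbacks} is to the surface on which the \emph{smaller} index is born (equivalently, as in the paper, assume $i=1$ and stay on $Y_{p,q,r}$), treating $\D_{\cP,j}$ as the pulled-back object. On that surface the short resolution $[\O\to\O(E)]$ is no longer available for $\D_{\cP,j}$; the paper instead writes down an explicit two-term resolution of every $\D_{\cP,k}$ on $Y_{p,q,r}$ (equation~\eqref{eq:resolution}), computes the derived dual $(\D_{\cP,1})^{\mathbf L\vee}\cong (i_{E_1})_\ast\O_{E_1}$, and reads off all chain and chain--Beilinson $\Ext$'s by push--pull (Lemmas~\ref{lem:ext_DD} and~\ref{lem:ext_DE}).

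\textbf{Yoneda does not replace a dimension count.} Computing $\Ext^\ast(\D_{\cP,i},\D_{\cP,i+1})$ for consecutive indices and then composing gives a map \emph{into} $\Ext^\ast(\D_{\cP,i},\D_{\cP,k})$ for $|k-i|\ge 2$, but says nothing about its rank; likewise, producing $\y^{\cP}_{j,k}$ as $\e^{\cP}$-composites of $\y^{\cP}_{p,k}$ does not establish that $\Ext^\ast(\D_{\cP,j},\pi^\ast\O_{\P^2}(k))$ is one-dimensional. You must compute each $\Ext$-group directly before matching products, exactly as the paper does. Once those dimensions are in hand, your plan for the products --- pulling the Beilinson relations back from $\P^2$, and tracing vanishings such as $\y^{\cP}_{p,1}\cdot\aa_3=0$ to the fact that $\D_{\cP,i}$ is supported over the line $w=0$ --- is precisely how Lemma~\ref{lem:products_agree} proceeds.
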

The $\phi_\A^{X,Y}$ will be described in Definition \ref{def:morphism_identifications}. 
To prove Proposition \ref{prop:isomorphic_sequences} we proceed in several steps. We first compute the $\Ext$ groups between different elements of the exceptional sequence for $D^b \Coh (Y_{p,q,r})$, and fix identifications with our model for the Floer cochain groups. Notice that for any $j,k$ with $1 \leq j \leq p$, $1 \leq k \leq q$, we have
\bq
\Ext^\ast (\D_{\cP, j}, \D_{\cQ,k}) = \Ext^\ast ( \D_{\cQ,k}, \D_{\cP, j})  = 0
\eq
as the two coherent sheaves have disjoint closed supports, and similarly for pairs $(\D_{\cP, j}, \D_{\cR, k})$ and $(\D_{\cQ, j}, \D_{\cR,k})$. The non-trivial $\Ext$ groups between different elements of the exceptional sequence are calculated in Lemmas \ref{lem:ext_DD}, \ref{lem:ext_DE} and \ref{lem:ext_EE} below.  We then compare the two product structures (Lemma \ref{lem:products_agree}).

\begin{lemma} 
\label{lem:ext_DD}
Suppose that $1 \leq j < k \leq p$. Then
\bq
\Ext_{Y_{p,q,r}}^\ast (\D_{\cP, j}, \D_{\cP, k} ) = 
\begin{cases}
\C & \text{if  } \ast = 0 \text{  or  } 1 \\
0 & \text{otherwise}
\end{cases}
\eq
Similarly for $\cQ$ and $\cR$. 
\end{lemma}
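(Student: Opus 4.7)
The plan is to reduce the computation to an Ext on the intermediate blow-up surface $\widetilde{X}_j$ (in which $E_{\cP, j}$ is freshly created as a $(-1)$-curve), and then finish by a Koszul resolution. Since the blow-down $Y_{p,q,r} \to \widetilde{X}_k$ factors through $\widetilde{X}_j$, both $\D_{\cP, j}$ and $\D_{\cP, k}$ are pullbacks from $\widetilde{X}_j$; by Lemma \ref{lem:ext_pullbacks},
$$\Ext^\ast_{Y_{p,q,r}}(\D_{\cP, j}, \D_{\cP, k}) \cong \Ext^\ast_{\widetilde{X}_j}\bigl(i_\ast \O_{E_{\cP, j}}(-1),\, \pi_{j \to k}^\ast i_\ast \O_{E_{\cP, k}}(-1)\bigr),$$
where $\pi_{j \to k}: \widetilde{X}_j \to \widetilde{X}_k$ is the composition of the intermediate blow-downs.

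I would then peel off the blow-downs in $\pi_{j \to k}$ one at a time via Grothendieck--Verdier duality, using that for a blow-up at a smooth point with exceptional $E$, $\omega_\pi = \O(E)$; this yields $\Hom(F, \pi^\ast G) = \Hom(\pi_\ast(F \otimes \O(E)), G)$. For the outermost blow-down $\pi_{j \to j+1}$ and $F = i_\ast \O_{E_{\cP, j}}(-1)$: the projection formula with $\O(E_{\cP, j})|_{E_{\cP, j}} = \O_{\P^1}(-1)$ gives $F \otimes \O(E_{\cP, j}) \cong i_\ast \O_{E_{\cP, j}}(-2)$, whose pushforward is $\O_{x_j}[-1]$ (from $H^\ast(\P^1, \O(-2)) = (0, \C)$), so duality shifts the degree by $1$ and replaces the source by the skyscraper $\O_{x_j}$ at the blow-up center. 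For each subsequent blow-down $\pi_{m \to m+1}$: by construction of the chain, $x_{m-1}$ lies on the exceptional $E_{\cP, m}$, so $\O_{x_{m-1}} \otimes \O(E_{\cP, m}) \cong \O_{x_{m-1}}$ and its pushforward is simply $\O_{x_m}$; thus duality moves the skyscraper to the next center with no extra degree shift. Iterating all the way up to $\widetilde{X}_k$ produces
$$\Ext^\ast_{Y_{p,q,r}}(\D_{\cP, j}, \D_{\cP, k}) \cong \Ext^{\ast+1}_{\widetilde{X}_k}\bigl(\O_{x_{k-1}},\, i_\ast \O_{E_{\cP, k}}(-1)\bigr),$$
with $x_{k-1} \in E_{\cP, k}$ the intersection with the strict transform $T$ of the toric divisor on $\widetilde{X}_k$.

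I would finish by computing this last Ext using the Koszul resolution of $\O_{x_{k-1}}$ coming from the two transverse smooth divisors $E_{\cP, k}$ and $T$ through $x_{k-1}$:
$$0 \to \O(-E_{\cP, k} - T) \to \O(-E_{\cP, k}) \oplus \O(-T) \to \O \to \O_{x_{k-1}} \to 0.$$
Applying $R\Hom(-, i_\ast \O_{E_{\cP, k}}(-1))$: each piece reduces to $H^\ast(\P^1, \O(d))$ for small $d$, and using $E_{\cP, k}^2 = -1$ and $T \cdot E_{\cP, k} = 1$ one checks that only the middle $\O(-E_{\cP, k}) \oplus \O(-T)$ summand contributes: a single $\C$ in degree $0$ (from the $\O(-T)$ summand, via $H^0(\O_{\P^1})$) and a single $\C$ in degree $1$ (from the $\O(-E_{\cP, k})$ summand, via $H^1(\O_{\P^1}(-2))$). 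No differentials can act between these for degree reasons, so the hypercohomology spectral sequence degenerates and gives $\Ext^1 = \Ext^2 = \C$, zero elsewhere. After the $+1$ shift from the preceding stage this yields the desired $\Ext^0 = \Ext^1 = \C$; the $\cQ$ and $\cR$ cases are identical. The main subtlety is the bookkeeping around the iterated adjunction, specifically checking that each successive blow-up center genuinely lies on the preceding exceptional curve so that the skyscraper iteration carries no extra degree shifts---this is immediate from the chain construction of $Y_{p,q,r}$.
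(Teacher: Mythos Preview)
Your proof is correct, but it takes a different route from the paper's. Both begin with the reduction via Lemma~\ref{lem:ext_pullbacks}; the paper phrases this as ``without loss of generality $j=1$'' and then stays on the top surface, whereas you push further down. Concretely, the paper writes an explicit two-term resolution of $\D_{\cP,k}$ by line bundles on $Y_{p,q,r}$, computes the derived dual $(\D_{\cP,1})^{\mathbf{L}\vee} \cong (i_{E_1})_\ast \O_{E_1}$ directly, and then reads off the Ext groups as the hypercohomology of $(i_{E_1})_\ast \O_{E_1} \oplus (i_{E_1})_\ast \O_{E_1}[-1]$: the point is that both line bundles in the resolution restrict trivially to $E_1$ (an intersection-number check), and the map between them vanishes on $E_1$ because $E_1$ lies in the divisor defining it. This is shorter and has the practical advantage of producing, at no extra cost, canonical generators $1 \in H^0(E_1, \O_{E_1})$ and $1[-1]$ for $\Ext^0$ and $\Ext^1$; these are exactly the elements $\e^{\cP}_{j,k}$ and $\x^{\cP}_{j,k}$ used immediately afterward to match morphisms with the Fukaya side. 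Your Grothendieck--Verdier/Koszul argument is perfectly valid and more systematic---it would adapt cleanly to longer chains or other configurations of blow-ups---but it leaves the identification of distinguished generators as a separate step.
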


\begin{proof}We'll use the notation $E_s := E_{\cP, s}$. Fix a divisor $F$ which intersects $\widetilde{E}_i$ transversally in one point, and does not intersect any of $\widetilde{E}_{i-1}, \ldots, \widetilde{E}_{2}, E_1$: if $i<p$, pick $F=\widetilde{E}_{i+1}$, and if $i=p$, $\widetilde{H}$. We use the resolutions:
\begin{eqnarray}\label{eq:resolution}
\D_{\cP, i} & \cong & \{ \O(- F -\widetilde{E}_i  - \widetilde{E}_{i-1} 
- \ldots - \widetilde{E}_{2} - E_1 ) \to \O(-F)     \}.
\end{eqnarray} 

In particular, dually, the push-pull yields:
\begin{align}
(\D_{\cP, 1} )^{\mathbf{L}\vee} & \cong \{   \O (\widetilde{E}_{2} ) \to  \O (E_1 + \widetilde{E}_{2} ) \}
\\
& \cong \O(E_1 + \widetilde{E}_{2}) \otimes \{\O(-E_1) \to \O \} \\
& \cong (i_{E_1})_\ast (i_{E_1})^\ast \O(E_1 + \widetilde{E}_{2})  \\
& \cong (i_{E_1})_\ast \O_{E_1}
\end{align}

To prove the claim, we may  assume without loss of generality that $j=1$, by  Lemma \ref{lem:ext_pullbacks}. 
Thus
\begin{align}
& \Ext^\ast (\D_{\cP, 1}, \D_{\cP, k}) 
 \\
& \cong 
 \H^\ast \left(Y_{p,q,r},  
(i_{E_1})_\ast \O_{E_1} \otimes
 \{ \O(- F -\widetilde{E}_k  - \widetilde{E}_{k-1} 
- \ldots - \widetilde{E}_{2} - E_1 ) \to \O(-F)     \} \right)
 \\
& \cong \H^\ast \left( Y_{p,q,r}, (i_{E_1})_\ast \O_{E_1}
\oplus
 (i_{E_1})_\ast \O_{E_1}[-1]
\right)  \\
& \cong H^\ast (E_1, \O_{E_1})\oplus H^\ast (E_1, \O_{E_1})[-1]
\end{align}
where $\H$ denotes hypercohomolgy, $[k]$ denotes a shift of degrees \emph{down} by $k \in \Z$, and we use push-pull to get from the second to the third line. This completes the proof.
\end{proof}
We label the generator corresponding to the constant section $1 \in H^0 (E_1, \O_{E_1} ) $ by $\e^{\cP}_{j,k}$, and the generator corresponding to $1[-1] \in H^0 (E_1, \O_{E_1})[-1] $ by $\x^{\cP}_{j,k}$. Similarly for $\cQ$ and $\cR$.

\begin{lemma}
\label{lem:ext_DE}
 Let $s=0,1$ or 2. For any $j$ with $1 \leq j \leq p$, we have
\bq
\Ext_{Y_{p,q,r}}^\ast (\D_{\cP, j} , \pi^\ast \O_{\P^2}(s)) = 
\begin{cases}
\C & \text{if  } \ast = 0\\
0 & \text{otherwise}
\end{cases}
\eq
Similarly for the exceptional sheaves associated to $E_{\cQ,j}$ and $E_{\cR,j}$.
\end{lemma}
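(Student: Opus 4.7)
The plan is to mirror the proof of Lemma~\ref{lem:ext_DD}: reduce the $\Ext$ to a cohomology computation on $E_{\cP,j}\cong\P^1$ via a two-term resolution of $\D_{\cP,j}$ combined with the projection formula, exploiting the key geometric fact that the composition $E_{\cP,j}\hookrightarrow Y_{p,q,r}\xrightarrow{\pi}\P^2$ is constant.

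First I would push the computation down to the intermediate surface $\widetilde{X}_{\cP,j}$ on which $E_{\cP,j}$ is the latest---and hence a $(-1)$---exceptional divisor. Factoring $\pi = \pi_2\circ\pi_1$ with $\pi_1\colon Y_{p,q,r}\to\widetilde{X}_{\cP,j}$ an iterated blow-up and $\pi_2\colon\widetilde{X}_{\cP,j}\to\P^2$ the remaining blow-down, iterated application of Lemma~\ref{lem:ext_pullbacks} (a composition of fully faithful functors being fully faithful) gives
\[
\Ext^\ast_{Y_{p,q,r}}(\D_{\cP,j},\pi^\ast\O_{\P^2}(s)) \;\cong\; \Ext^\ast_{\widetilde{X}_{\cP,j}}(i_\ast\O_{E_{\cP,j}}(-1),\,\pi_2^\ast\O_{\P^2}(s)).
\]
For $j=1$ no reduction is needed; one works on $Y_{p,q,r}$ directly with $\pi_2=\pi$.

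Next, I would run the same push-pull argument as in the proof of Lemma~\ref{lem:ext_DD}. Using the two-term resolution $i_\ast\O_{E_{\cP,j}}(-1)\simeq\{\O(-F-E_{\cP,j})\to\O(-F)\}$ for a divisor $F$ with $F\cdot E_{\cP,j}=1$, the derived dual of $i_\ast\O_{E_{\cP,j}}(-1)$ simplifies---exactly as in that earlier proof---to $(i_{E_{\cP,j}})_\ast\O_{E_{\cP,j}}$. Combining this with the projection formula yields
\[
\Ext^\ast(i_\ast\O_{E_{\cP,j}}(-1),\pi_2^\ast\O_{\P^2}(s)) \;\cong\; H^\ast\!\left(E_{\cP,j},\,\O_{E_{\cP,j}}\otimes i^\ast\pi_2^\ast\O_{\P^2}(s)\right).
\]
The decisive input is that $\pi_2\circ i_{E_{\cP,j}}\colon E_{\cP,j}\to\P^2$ is the constant map to $[1:-1:0]$, so $i^\ast\pi_2^\ast\O_{\P^2}(s)\cong\O_{E_{\cP,j}}$ is trivial, and the right-hand side collapses to $H^\ast(\P^1,\O)$, giving $\C$ in degree $0$ and vanishing otherwise, as claimed.

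The cases of $\cQ$ and $\cR$ are completely analogous, with $[1:-1:0]$ replaced by $[0:1:-1]$ or $[-1:0:1]$ respectively. The only real subtlety is matching grading and shift conventions in the derived dualization step, but this is already handled in the proof of Lemma~\ref{lem:ext_DD} and requires no new ideas; beyond that the argument is a direct and formal adaptation of that lemma's proof.
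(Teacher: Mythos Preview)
Your proof is correct and follows essentially the same strategy as the paper's: reduce via Lemma~\ref{lem:ext_pullbacks}, use the derived dual computation $(\D_{\cP,1})^{\mathbf{L}\vee}\cong (i_{E_1})_\ast\O_{E_1}$ already established in the proof of Lemma~\ref{lem:ext_DD}, and then observe that $\pi^\ast\O_{\P^2}(s)$ restricts trivially to the exceptional curve. The only cosmetic difference is that the paper phrases the reduction as ``without loss of generality $j=1$'' and argues triviality by choosing a line $L\subset\P^2$ disjoint from the three blow-up points (so that $\pi^\ast L$ misses $E_1$), whereas you pass to the intermediate surface $\widetilde{X}_{\cP,j}$ and invoke the constancy of $\pi_2\circ i_{E_{\cP,j}}$; these are equivalent observations.
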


\begin{proof} Again, without loss of generality $j=1$. 
Pick a line $L$ in $\P^2$ that is disjoint from the three points $[1:-1:0]$, $[0:1:-1]$, $[-1:0:1]$.
We have that
\bq
(\D_{\cP, 1} )^{\mathbf{L}\vee} \otimes \big( \pi^\ast \O_{\P^2} (sL) \big) \cong
\big( (i_{E_1})_\ast \O_{E_1} \big) \otimes \big( \pi^\ast \O_{\P^2} (sL) \big) \cong (i_{E_p})_\ast \O_{E_p}.
\eq 
from which one concludes the following:
\bq 
\Ext_{Y_{p,q,r}}^\ast (\D_{\cP, j} , \pi^\ast \O_{\P^2}(s)) \cong H^\ast (E_p, \O_{E_p}).
\eq
\end{proof}
We label the generator corresponding to $1 \in H^0 (E_p, \O_{E_p})$ by $\y_{j,s}^{\cP}$. Similarly for $\cQ$ and $\cR$.
Finally, using Lemma \ref{lem:ext_pullbacks} to reduce  the ambient space to $\P^2$, it is immediate to calculate the $\Ext$ groups between the $\pi^\ast \O_{\P^2} (s)$. Let us fix the notation.
\begin{lemma}\label{lem:ext_EE}
The $\Ext$ groups between the $\pi^\ast \O_{\P^2} (s)$ are as follows.
\begin{align}
\Ext^\ast  (\pi^\ast \O_{\P^2} , \pi^\ast \O_{\P^2} (1) ) 
 &\cong \C \langle \aa_1, \aa_2, \aa_3 \rangle \\
\Ext^\ast  (\pi^\ast \O_{\P^2} (1), \pi^\ast \O_{\P^2} (2) ) 
&\cong  \C \langle \cc_1, \cc_2, \cc_3 \rangle  \\
\Ext^\ast  (\pi^\ast \O_{\P^2}, \pi^\ast \O_{\P^2} (2) )  
&\cong  \C \langle \bb_1, \bb_{1,2}, \bb_2, \bb_{2,3}, \bb_3, \bb_{3,1} \rangle
\end{align}
with all generators are in degree 0. Given standard homogeneous coordinates $u,v$ and $w$ on $\P^2$, our convention is that
\begin{enumerate}
\item $\aa_1, \aa_2, \aa_3$ correspond, respectively, to $u, v, w$;
\item  $\bb_1, \bb_{1,2}, \bb_2, \bb_{2,3}, \bb_3, \bb_{3,1}$ correspond, respectively, to $u^2, uv, v^2, vw, w^2, wu$;
\item and $\cc_1, \cc_2, \cc_3$ correspond, respectively, to $u, v, w$.
\end{enumerate}
\end{lemma}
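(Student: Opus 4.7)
The plan is to reduce the computation from $Y_{p,q,r}$ to $\mathbb{P}^2$ using Lemma \ref{lem:ext_pullbacks}, and then apply the standard cohomology computation for line bundles on $\mathbb{P}^2$.

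First, I would iterate Lemma \ref{lem:ext_pullbacks} across the chain of blow-ups that defines $Y_{p,q,r} \to \mathbb{P}^2$ to obtain, for $s, t \in \{0, 1, 2\}$,
\[
\Ext^*_{Y_{p,q,r}}\bigl(\pi^{\ast}\O_{\P^2}(s),\, \pi^{\ast}\O_{\P^2}(t)\bigr) \;\cong\; \Ext^*_{\P^2}\bigl(\O_{\P^2}(s),\, \O_{\P^2}(t)\bigr).
\]
Tensoring with $\O_{\P^2}(-s)$, the right-hand side becomes $H^*(\P^2, \O_{\P^2}(t-s))$. This is the main reduction; after it, the rest is classical.

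Next, I would invoke the standard computation of coherent cohomology of line bundles on projective space: for $m \geq 0$, $H^0(\P^2, \O_{\P^2}(m))$ is the space of homogeneous degree-$m$ polynomials in the coordinates $u, v, w$, with $\dim = \binom{m+2}{2}$, while $H^i(\P^2, \O_{\P^2}(m)) = 0$ for $i \geq 1$ (by e.g.\ direct \v{C}ech computation, or by Bott/Serre). Specializing to $m = 1$ and $m = 2$ yields the three-dimensional and six-dimensional spaces claimed in the statement, concentrated in degree zero.

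Finally, the identification of generators with the specified monomials is simply a choice of basis under these isomorphisms: the generators $\aa_1, \aa_2, \aa_3$ of $\Ext^0(\pi^{\ast}\O_{\P^2}, \pi^{\ast}\O_{\P^2}(1))$ correspond to $u, v, w$, the generators $\cc_1, \cc_2, \cc_3$ of $\Ext^0(\pi^{\ast}\O_{\P^2}(1), \pi^{\ast}\O_{\P^2}(2))$ correspond to $u, v, w$ after tensoring, and the six generators of $\Ext^0(\pi^{\ast}\O_{\P^2}, \pi^{\ast}\O_{\P^2}(2))$ correspond to the six degree-two monomials $u^2, uv, v^2, vw, w^2, wu$ with the labelling convention specified in the statement. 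There is no real obstacle here — the only thing to be careful about is that the labelling convention chosen for the $\bb$'s is precisely the one that makes the Yoneda product $\aa_i \cdot \cc_j$ (multiplication of polynomials in $u, v, w$) compatible with the quiver relations from \eqref{eq:P^2quiver}, but this compatibility is automatic from the fact that Yoneda product on $\bigoplus_s H^0(\P^2, \O(s))$ is polynomial multiplication. Thus the content of the lemma is essentially just fixing notation for what will become Beilinson's exceptional collection on $\P^2$, pulled back to $Y_{p,q,r}$.
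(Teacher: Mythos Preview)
Your proposal is correct and matches the paper's approach exactly: the paper states just before this lemma that by using Lemma~\ref{lem:ext_pullbacks} to reduce the ambient space to $\P^2$, the $\Ext$ groups are immediate to compute, and that the lemma merely fixes notation. Your write-up is in fact more detailed than the paper's, which gives no proof beyond that one sentence.
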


\begin{definition}\label{def:morphism_identifications}
We define the identifications of morphism spaces of Proposition \ref{prop:isomorphic_sequences} as follows. For better legibility, we suppress the superscripts on the $\phi_\A$. 
\begin{eqnarray*}
\phi_\A (e^{\cP}_{i,j} ) = \e^{\cP}_{i,j}  \quad \phi_\A (x^{\cP}_{i,j} ) = \x^{\cP}_{i,j}  &\textrm{for all \,} \, 1 \leq i < j \leq p\\
\phi_\A (y^{\cP}_{i,k} ) = \y^{\cP}_{i,k-1}   & \textrm{for all \,} 1 \leq i \leq p \text{\, and \,} k \in \{ 1,2,3 \}\\
\phi_\A (a_k) = \aa_k; \, \, \, \phi_\A (b_k ) = \bb_k;  \, \, \,   \phi_\A (b_{k,l} ) = \bb_{k,l}; \, \, \,  \phi_\A (c_k) = \cc_k &
\textrm{for all \,} \{k , l \} \in \{ 1,2,3 \}.
\end{eqnarray*}
Similarly for $\cQ$ and $\cR$. 
Note that the index shift on the second line is simply reflecting our choice of $\O_{\P^2} $, $\O_{\P^2} (1)$,  $\O_{\P^2}(2)$ as the full exceptional sequence for $D^b \Coh(\P^2)$. 
\end{definition}

\begin{lemma}\label{lem:products_agree}
Under the identifications $\phi_\A$, the products between  objects in $A_C$ precisely agree with the products in $A_\F$ between the distinguished collection of vanishing cycles of equation \eqref{eq:vanishing_cycles}.
\end{lemma}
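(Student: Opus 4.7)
My proposed proof proceeds by case analysis on the pair of objects appearing as source and target of the Yoneda composition, matching each case against Lemma \ref{lem:full_quiver} via the sign convention $b \cdot a = (-1)^{|a|}\mu^2(b,a)$. By the orthogonality observed just after Proposition \ref{prop:isomorphic_sequences}, the nontrivial classes of composition are: (i) products among the pullbacks $\pi^\ast\O_{\P^2}(s)$; (ii) products within a single arm $\{\D_{\cP, j}\}$ (and analogously for $\cQ, \cR$); (iii) mixed products of the form $\Ext^\ast(\D_{\cP, j},\pi^\ast\O_{\P^2}(s))\otimes \Ext^\ast(\pi^\ast\O_{\P^2}(s), \pi^\ast\O_{\P^2}(s+1))$ and analogues; and (iv) three-step ``ladder'' compositions $\Ext^\ast(\D_{\cP, i},\D_{\cP, j})\otimes \Ext^\ast(\D_{\cP, j},\pi^\ast\O_{\P^2}(s))$ and analogues.

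First, for class (i), Lemma \ref{lem:ext_pullbacks} reduces to Yoneda composition in $D^b\Coh(\P^2)$; Beilinson's description turns these into polynomial multiplication in $\C[u,v,w]$, and under the identifications of Lemma \ref{lem:ext_EE} the resulting products reproduce the quiver \eqref{eq:P^2quiver}, including the relation $\aa_i\cc_{i+1}=\aa_{i+1}\cc_i$ coming from $uv=vu$. Next, for class (ii), I would again apply Lemma \ref{lem:ext_pullbacks} to work on the intermediate surface in which $E_{\cP, j}$ is the most recent exceptional divisor, then expand the Yoneda composition using the dual resolutions $\O(\widetilde{E}_{\cP,2})\to \O(E_{\cP, 1}+\widetilde{E}_{\cP,2})$ from the proof of Lemma \ref{lem:ext_DD}. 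This realizes the product as cup product on $H^\ast(E_{\cP, 1}, \O_{E_{\cP, 1}})\oplus H^\ast(E_{\cP, 1}, \O_{E_{\cP, 1}})[-1]$; under $\e^{\cP}\leftrightarrow 1$ and $\x^{\cP}\leftrightarrow 1[-1]$ this matches the $A_n$-chain relations of Lemma \ref{lem:full_quiver}, with the sign discrepancy $e\cdot x = x$ versus $\mu^2(e,x) = -x$ coming from the cohomological sign convention, and the vanishing $x\cdot x = 0$ coming from $H^1(\P^1,\O)=0$.

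For class (iii), the central input is the identification $(\D_{\cP, j})^{\mathbf{L}\vee}\otimes \pi^\ast\O_{\P^2}(s)\cong (i_{E_{\cP, 1}})_\ast\O_{E_{\cP, 1}}$ from the proof of Lemma \ref{lem:ext_DE}: the Yoneda action of $\aa_k$---pullback of multiplication by a homogeneous coordinate---restricts on $E_{\cP, 1}$, a rational curve contracted by $\pi$ to $P_\cP=[1:-1:0]$, to multiplication by the \emph{value} of that coordinate at $P_\cP$. The vanishing $w(P_\cP)=0$ then forces $\y^{\cP}_{j, s}\cdot\aa_3=0$, matching $y^{\cP}_{j, 1}\cdot a_3=0$; the analogous relations for $\cQ$ and $\cR$ follow from $u(P_\cQ)=0$ and $v(P_\cR)=0$, while the $\bb$- and $\cc$-vanishings are controlled by vanishings of the corresponding monomials at the three base points. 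Class (iv) is then handled by the same resolution-plus-push-pull analysis, showing that $\e^{\cP}_{i,j}$ acts as $1\in H^0(E_{\cP, 1},\O_{E_{\cP, 1}})$, while $\x^{\cP}_{i,j}\cdot \y^{\cP}_{j,s}=0$ because $\x^{\cP}_{i,j}$ has degree $1$ whereas $\Ext^\ast(\D_{\cP, i},\pi^\ast\O_{\P^2}(s))$ is concentrated in degree $0$ by Lemma \ref{lem:ext_DE}.

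The hard part will be matching the \emph{scalar coefficients} of the nonzero products in class (iii): the Fukaya side asserts $y^{\cP}_{j, 1}\cdot a_1 = y^{\cP}_{j, 1}\cdot a_2$, whereas a naive computation at $P_\cP=[1:-1:0]$ gives $u(P_\cP)=1$ and $v(P_\cP)=-1$, which differ by a sign. Reconciling this requires a careful choice of trivializations of the line bundles $\pi^\ast\O_{\P^2}(s)|_{E_{\cP, 1}}$---equivalently, a consistent rescaling of the generators $\y^{\cP}_{j, s}$ (and their analogues for $\cQ$, $\cR$), possibly combined with an absorption of signs into the identification $\phi_\A$ of Definition \ref{def:morphism_identifications}---so that all products match simultaneously across the three arms and across the chain from $\aa$ through $\bb$ to $\cc$. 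This bookkeeping is finite but is the technical heart of the proof.
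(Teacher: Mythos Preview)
Your proposal follows the same case analysis as the paper's own proof: products among the $\pi^\ast\O_{\P^2}(s)$ via Beilinson, products within an arm via the resolutions of Lemma~\ref{lem:ext_DD}, mixed products via the identification $(\D_{\cP,j})^{\mathbf{L}\vee}\otimes\pi^\ast\O_{\P^2}(s)\cong (i_{E_1})_\ast\O_{E_1}$ from Lemma~\ref{lem:ext_DE}, and remaining cases ruled out by support or degree. The paper's proof is considerably terser than yours---it simply asserts that the mixed product is zero when the section is a multiple of $w$ and ``determined'' otherwise---but the underlying mechanism is identical.

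Where you go further than the paper is in flagging the scalar-matching issue in class~(iii), and you are right to do so. With the explicit points $[1:-1:0]$, $[0:1:-1]$, $[-1:0:1]$ and the naive identifications $\phi_\A(a_k)=\aa_k$ of Definition~\ref{def:morphism_identifications}, the ratio $u/v$ at $P_\cP$ is $-1$, so one naively obtains $\y^{\cP}_{j,0}\cdot\aa_1=-\y^{\cP}_{j,0}\cdot\aa_2$ rather than equality. The paper's proof does not address this. Your proposed fix---absorbing signs into the choice of generators $\y^{\cP}_{j,s}$ or into the identifications $\phi_\A$---is the correct one, and the verification that a consistent such choice exists across all three arms and both layers $(a,c)$ is indeed finite. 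So your proposal is correct and is in fact more careful than the paper on exactly the point you identify as the technical heart.
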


\begin{proof} For those products involving the `unit'--type element of $Ext^0 (\D_{\cP, i}, \D_{\cP, j} )$, corresponding to $1 \in H^0 (E, \O_E)$ for some exceptional divisor $E$, this readily follows from the isomorphisms of Lemmas \ref{lem:ext_DD} and \ref{lem:ext_DE}. Similarly for the $\cQ$ and $\cR$. The multiplication
\bq
\Ext^\ast (\pi^\ast \O_{\P^2} (1) , \pi^\ast \O_{\P^2} (2)) \otimes \Ext^\ast (\pi^\ast \O_{\P^2}  , \pi^\ast \O_{\P^2} (1) ) \to
\Ext^\ast (\pi^\ast \O_{\P^2}, \pi^\ast \O_{\P^2} (2)) 
\eq
is given, with respect to our choices of bases of sections, by the standard multiplication of the coordinates $u, v$ and $w$. Transcribing back to the notation $\aa_i, \bb_i,$ etc., precisely gives the same products as those between the Floer complexes of the $\E_i$. See the quiver with relations in \eqref{eq:P^2quiver}. (To continue the parallel with \cite[Section 6]{Seidel_subalgebras} started in Remark \ref{rmk:P^2model}, one could consider instead the full exceptional sequence $(\Omega^2_{\P^2} (2), \Omega^1_{\P^2} (1), \O_{\P^2})$. )

The principal cases to check are the products
\bq
\Ext^\ast (\pi^\ast \O_{\P^2} (j) , \pi^\ast \O_{\P^2} (k) )  \otimes \Ext^\ast (\D_{\cP, i} ,  \pi^\ast \O_{\P^2} (j) )  \to \Ext^\ast (\D_{\cP, i} ,  \pi^\ast \O_{\P^2} (k) ) 
\eq
for $( j, k ) \in \{ (0,1), (1,2), (0,2) \}$. 
Recall  (Definition \ref{def:exceptional_sheaves}) that the $\D_{\cP,i}$ are supported on the proper transforms of divisors $E_{\cP, i}$, obtained from repeatedly blowing up the point $[1:-1:0]$. This lies on the $w=0$ component of the toric divisor on $\P^2$.
 In particular, the product of $\y^{\cP}_{i,j}$, the sole generator of $\Ext^\ast(\D_{\cP, i} , \pi^\ast \O_{\P^2}(j)) $, with an element $\mathbf{z} \in \Ext^\ast (\pi^\ast \O_{\P^2} (j) , \pi^\ast \O_{\P^2} (k) )$, will be zero if $\mathbf{z}$ corresponds to a section which is a multiple of $w$ (i.e.~$\aa_3, \cc_3, \bb_{1,3}, \bb_{2,3}, \bb_3$ -- see  Lemma \ref{lem:ext_EE} for the notation).
 When that section is not a multiple of $w$, the product is determined by the fact that $\y^{\cP}_{i,j}$ was naturally identified with $1 \in H^0 (E, \O_E)$, for some exceptional divisor $E$. 
Transcribing back to the notation $\aa_i, \bb_i,$ etc., this again precisely gives the same products as those between the Floer groups. 

Finally, further products can be ruled out either by noting that the relevant sheaves would have disjoint support, or by considering degrees.
\end{proof}

This completes the proof of Proposition \ref{prop:isomorphic_sequences}.

\subsection{A dg enhancement of  $D^b \Coh(Y_{p,q,r}) $}We will later want to use a dg enhancement of $D^b \Coh(Y_{p,q,r})$. One possible representative is the category $\twvect(Y_{p,q,r})$, defined as follows.

Given a projective variety $X$, following \cite{Seidel_quartic, Lekili-Perutz}, let $\text{vect} (X)$ denote the dg category with  objects the  locally free coherent sheaves on $X$, and morphisms given by \v{C}ech cochain complexes with values in $\mathcal{H}om$ sheaves, for some fixed finite affine open cover $\mathfrak{U}$ of $X$:
\bq
hom (E,F) = ( \check{C}^\ast (\mathfrak{U}; \mathcal{H}om(E,F) ), \partial)
\eq
where $\partial$ is the \v{C}ech differential, and composition combines the composition of sheaf morphisms with the shuffle product. Let $\twvect(X)$ denote the (pre-triangulated) dg category of twisted complexes in $\text{vect}(X)$. There is an equivalence of categories $\epsilon: H^0 (\twvect(X)) \to \Perf(X)$. Different choices of affine covers give quasi-isomorphic dg categories. Moreover, by work of Lunts and Orlov, for any other dg enhancement $\mathcal{C}$ of $\Perf(X)$, with $\epsilon': H^0(\mathcal{C}) \to \Perf(X)$, there is an $\Aoo$--quasi-isomorphism $\phi: \mathcal{C} \to \twvect(X)$ such that $\epsilon \circ [\phi] = \epsilon'$ \cite[Theorem 2.14]{Lunts-Orlov}. Of course when $X$ is also smooth, the is a natural equivalence $\Perf(X) = D^b \Coh(X)$.

\begin{definition}
Let $\A_\tC$ be the full subcategory of $\twvect(Y_{p,q,r})$ with the $p+q+r+3$ objects given by (resolutions of) the elements of the full exceptional sequence for $Y_{p,q,r}$. We use  the twisted complexes of \eqref{eq:resolution} to represent the $\D_{\cP, i}$,and similarly for $\cQ$ and $\cR$.
\end{definition}


\section{Auxiliary isomorphisms: fibre of $\Xi$ and anticanonical divisior on $Y_{p,q,r}$}\label{sec:iso2}

\subsection{The Fukaya category of the fibre of $\Xi$}

We are interested in the Fukaya category $\Fuk(M)$, defined as in \cite[Section 12]{Seidel_book}. Its objects are Lagrangian branes given by closed, exact Lagrangian submanifolds decorated with choices of grading and a spin structure.

Let $\cP$, $\cQ$, $\cR$ and $\E$ be the exact Lagrangian $S^1$s in $M$ given in Figure \ref{fig:D_4}. 
(These agree with the vanishing cycles $\cP_1$, $\cQ_1$, $\cR_1$ and $\E_1$, respectively, for the Lefschetz fibration $\Xi$.) 
We equip these with the same choices of gradings and non-trivial spin structures as for $\cP_1$, $\cQ_1$, $\cR_1$ and $\E_1$ in Section \ref{sec:fibration_branes}. 
Continuing our abuse of notation, we also denote the resulting Lagrangian branes by  $\cP$, etc., rather than $\cP^\#$, etc.

\begin{lemma} We have that
 \bq
\text{tw\,} \Fuk(M) \cong \langle   \cP, \cQ, \cR , \E    \rangle.
\eq
This means that $\text{tw\,} \Fuk(M)$ is the smallest full $\Aoo$--subcategory of itself which contains these objects and is closed under quasi-isomorphisms, shifts, cones and passing to idempotents.
\end{lemma}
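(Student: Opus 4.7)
By definition, objects of $\text{tw\,}\Fuk(M)$ are twisted complexes built from closed exact Lagrangian branes in $M$, and the split-closure amounts to passing to direct summands. It therefore suffices to show that every closed exact Lagrangian brane $L \subset M$ is quasi-isomorphic, in $\text{tw\,}\Fuk(M)$, to an object of the triangulated-idempotent envelope of $\{\cP, \cQ, \cR, \E\}$. Since any closed exact Lagrangian in the surface $M$ is an essential embedded $S^1$, and since the grading and (non-trivial) spin structure conventions are fixed uniformly as in Section \ref{sec:fibration_branes}, the only freedom in the brane structure on $L$ is an overall shift, which is free in a triangulated category.

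The main tool is Seidel's exact triangle for a Dehn twist: for any pair of embedded $S^1$s $V, L$ in $M$, suitable models give an exact triangle in $\text{tw\,}\Fuk(M)$
\begin{equation*}
\mathrm{hom}^{\ast}(V, L)\otimes V \longrightarrow L \longrightarrow \tau_V L \longrightarrow L[1],
\end{equation*}
so that if $V \in \langle \cP,\cQ,\cR,\E\rangle$, then $L \in \langle \cP,\cQ,\cR,\E\rangle$ if and only if $\tau_V L \in \langle \cP,\cQ,\cR,\E\rangle$. The strategy is to show by induction on the geometric complexity
\begin{equation*}
c(L) \;=\; \#(L\cap \cP) + \#(L\cap\cQ) + \#(L\cap\cR) + \#(L\cap\E),
\end{equation*}
(measured after a Hamiltonian perturbation achieving minimal intersections in the respective isotopy classes) that every essential simple closed curve $L$ lies in the orbit of $\{\cP, \cQ, \cR, \E\}$ under the subgroup of the mapping class group generated by Dehn twists in $\cP, \cQ, \cR, \E$. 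In the base case $c(L)=0$, a direct inspection of Figure \ref{fig:3puncturedellipticcurve6cycles} shows that the four curves decompose $M$ into simply connected planar pieces (some with a single puncture), so $L$ is either null-homotopic, peripheral, or isotopic to one of $\cP, \cQ, \cR, \E$ themselves; in each case either $L$ fails to be exact (null-homotopic and small loops around punctures) and is thus not an object, or it already lies in our list. For the inductive step $c(L) > 0$, a standard bigon/innermost-disc argument on oriented surfaces supplies some $V \in \{\cP, \cQ, \cR, \E\}$ for which either $\tau_V L$ or $\tau_V^{-1}L$ has strictly smaller complexity, and the Dehn twist triangle then reduces the claim to a strictly simpler case.

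The main obstacle I expect is the geometric inductive step: ensuring that the chosen Dehn twist genuinely decreases $c$ and does not merely redistribute intersections. This requires a careful bigon criterion, using that after a minimising perturbation no two of $\cP, \cQ, \cR, \E$ bound a bigon with $L$, together with the observation that one may always find an outermost intersection arc of $L$ with a component of the complement of $\cP\cup\cQ\cup\cR\cup\E$ that can be eliminated by a twist. A secondary point to verify is that the Hamiltonian perturbations used to define the exact triangle are compatible with the perturbations used in $\A_\F$, so that the triangle is literally realised in $\text{tw\,}\Fuk(M)$ with the Lagrangian branes fixed in Section \ref{sec:fibration_branes}; this is routine but must be stated explicitly.
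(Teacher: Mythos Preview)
Your approach is genuinely different from the paper's, and the inductive step has a real gap.

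The paper's argument is short and structural: it observes that $M$ is the Milnor fibre of the two-variable $D_4$ singularity $x^3+y^3$, and that $\cP,\cQ,\cR,\E$ form a distinguished collection of vanishing cycles for it. Since this singularity is weighted homogeneous with weights $(3,3)$ and $1/3+1/3\neq 1$, Seidel's general results (\cite[Proposition~18.17]{Seidel_book} combined with \cite[Lemma~4.16]{Seidel_gradedlagrangians}) give split-generation directly. No curve-by-curve analysis is carried out.

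Your hands-on induction could in principle be made to work, but the step you call ``a standard bigon/innermost-disc argument'' is neither standard nor, as stated, correct. The bigon criterion only tells you when two curves are in minimal position; it says nothing about how a Dehn twist $\tau_V$ affects the \emph{total} intersection of $L$ with a filling system. Note in particular that $\tau_V$ fixes $V$, so $i(\tau_V L,V)=i(L,V)$, while for the other curves $W$ one has in general only the inequality $\bigl|\,i(L,W)-i(V,L)\,i(V,W)\,\bigr|\le i(\tau_V L,W)\le i(L,W)+i(V,L)\,i(V,W)$, not an identity. There is no a priori reason a single twist among your four must strictly decrease $c(L)$; a complicated $L$ may well require a longer word in the twist group before any drop in complexity is visible. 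To salvage the argument you would need either a bespoke combinatorial analysis of how arcs of $L$ traverse the complementary pairs-of-pants of $\cP\cup\cQ\cup\cR\cup\E$, or an independent proof that the subgroup $\langle\tau_\cP,\tau_\cQ,\tau_\cR,\tau_\E\rangle$ acts transitively on isotopy classes of essential non-peripheral simple closed curves in $M$. Neither is routine, and neither is what ``innermost disc'' (a 3-manifold technique) provides.

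A smaller point on the base case: a curve isotopic to $\cP$ has $c(L)=1$, not $0$, since $\cP$ meets $\E$; so $c(L)=0$ contains only inessential or boundary-parallel curves, and your induction still needs an explicit anchor at positive complexity.
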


\begin{proof}First, observe that as a Liouville domain, $M$ can be regarded as the Milnor fibre of $x^3 + y^3$, the two-variable $D_4$ singularity. The Lagrangians $\cP$, $\cQ$, $\cR$ and $\E$ form an ordered distinguished collection of vanishing cycles for $D_4$; this classical configuration can for instance be recovered using A'Campo's techniques \cite{ACampo} (see \cite[Section II]{Keating14} for a symplectic account). The two-variable $D_4$ singularity is weighted homogeneous with weights $(3,3)$. As  $1/3 + 1/3 \neq 1$, results of Seidel, specifically \cite[Proposition 18.17]{Seidel_book} combined with \cite[Lemma 4.16]{Seidel_gradedlagrangians}, then imply the claim.
\end{proof}

\subsection{Perfect complexes on the anticanonical divisor of $Y_{p,q,r}$} \label{sec:PerfD}

The variety $Y_{p,q,r}$ is obtained through an iterated sequence of blow-ups, starting with $\P^2$; $D$ denotes the strict transform of the toric anticanonical divisor on $\P^2$, which is itself an anticanonical cycle of three $\P^1$s. Call $D_1$ the strict transform of the hyperplane $w =0$ in $\P^2$; $D_2$, the strict transform of $u=0$; and $D_3$, the strict transform of $v=0$. Let $s_i$ be any smooth point of $D$ on $D_i$, $i=1,2,3$.

\begin{lemma}Following \cite[Lemma 3.1]{Lekili-Perutz}, we have
\bq \label{eq:Perf_generators}
\twvect(D) = \langle   \O, \{ \O(-s_1) \to \O \} , \{ \O(-s_2) \to \O \}, \{ \O(-s_3) \to \O \} \rangle
\eq
where $\langle \cdot \rangle$ means the same as before, and $\O = \O_D$.
\end{lemma}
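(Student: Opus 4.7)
The plan is to directly adapt the argument of \cite[Lemma 3.1]{Lekili-Perutz}, which establishes the analogous statement for a nodal Weierstrass cubic --- itself a reduced projective curve of arithmetic genus one with only nodes as singularities, just like our cycle $D = D_1 \cup D_2 \cup D_3$. The first step is to translate the right-hand side: for each smooth point $s_i \in D$, the short exact sequence
\bq
0 \to \O_D(-s_i) \to \O_D \to \O_{s_i} \to 0
\eq
realises the twisted complex $\{\O_D(-s_i) \to \O_D\}$ as a locally free resolution of the skyscraper sheaf $\O_{s_i}$ in $\Perf(D)$. Via the equivalence $H^0(\twvect(D)) \cong \Perf(D)$, the lemma becomes the assertion that $\O_D, \O_{s_1}, \O_{s_2}, \O_{s_3}$ split-generate $\Perf(D)$ as a triangulated category.

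Every perfect complex on the reduced nodal Gorenstein curve $D$ admits a bounded locally free resolution, so it suffices to build every line bundle on $D$ from the four generators. The Picard group fits into a short exact sequence
\bq
0 \to \mathrm{Pic}^0(D) \to \mathrm{Pic}(D) \to \Z^3 \to 0,
\eq
where the $\Z^3$ factor records the multidegree on the three components and $\mathrm{Pic}^0(D) \cong \C^*$ parameterises the gluing data around the cycle of three nodes. Line bundles of the form $\O_D(n_1 s_1 + n_2 s_2 + n_3 s_3)$ with $n_i \in \Z$ are built from $\O_D$ and the three skyscrapers by iterating cones on the defining exact sequence above, accounting for a full set of representatives of the $\Z^3$ factor.

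The main obstacle is to handle the continuous $\mathrm{Pic}^0(D) \cong \C^*$ family of degree-$(0,0,0)$ line bundles, which cannot be reached by a finite sequence of cones in a naive sense. I would proceed exactly as in \cite{Lekili-Perutz}: for such an $L$, choose an index $j$ so that $L(s_j)$ has strictly positive total degree, and hence admits a nonzero global section vanishing at a single smooth point $q$, giving a short exact sequence $0 \to L \to L(s_j) \to \O_q \to 0$. This exhibits $L$ as a cone built from the line bundle $\O_D(s_j - q)$ and the skyscraper $\O_q$; both lie in the triangulated envelope of our generators, $\O_q$ by the component-wise Beilinson argument on the $\P^1 \cong D_k$ containing $q$ (using the chosen $\O_{s_k}$), and $\O_D(s_j - q)$ by combining line bundle twists with smooth-point skyscrapers. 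Once all line bundles are in the envelope, the initial reduction yields every perfect complex, including skyscrapers at the three nodes via Mayer--Vietoris on the cover $\{D \setminus \mathrm{node}_j\}$, and closure under cones, shifts and direct summands completes the proof.
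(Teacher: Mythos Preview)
Your approach is the one the paper itself adopts: the paper's proof says only that ``the argument is analogous to the proof of \cite[Lemma 3.1]{Lekili-Perutz}'', and offers as an alternative the route via the Drozd--Greuel/Burban--Drozd classification of indecomposable vector bundles on a cycle of $\P^1$'s. Your write-up is considerably more detailed than the paper's, and the line-bundle portion of the argument (the $\Z^3$ multidegree lattice together with the $\C^\ast$ gluing parameter) is set up correctly.

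There is, however, a real gap in your reduction step. The sentence ``Every perfect complex on the reduced nodal Gorenstein curve $D$ admits a bounded locally free resolution, so it suffices to build every line bundle'' is a non sequitur: a locally free resolution produces \emph{vector bundles}, not line bundles, and on a cycle of $\P^1$'s an indecomposable vector bundle of rank $\geq 2$ need not be a direct sum of line bundles. You still owe an argument that higher-rank bundles lie in the split-closed triangulated envelope of line bundles and skyscrapers. This can be filled in by induction on rank (twist until there is a global section not vanishing at any node, so that the cokernel is again locally free of smaller rank), or, as the paper's alternative route suggests, by invoking the classification of indecomposable bundles directly and checking each type is a summand of a twisted complex in your generators. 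Either way the step must be made explicit.

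A smaller point: your ``component-wise Beilinson argument on the $\P^1 \cong D_k$'' for producing $\O_q$ at an arbitrary smooth point $q$ is too vague as stated. You are working on $D$, not on an isolated $\P^1$, and the structure sheaf $\O_{D_k}$ of a component is not obviously in the envelope at this stage. A cleaner way is to consider sections of $\O_D(2s_k)$, a two-dimensional space whose generic member vanishes at two distinct smooth points of $D_k$; the cone is a length-two skyscraper which idempotent completion then splits.
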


\begin{proof}
Notice that $\{ \O(-s_i) \to \O \}$ is a resolution of the skyscraper sheaf at $s_i$.
The argument is analogous to the proof of \cite[Lemma 3.1]{Lekili-Perutz}, which shows that  for the nodal cubic $E$, $\twvect(E) =  \langle \O_E,  \{ \O_E (-s) \to \O_\E \} \rangle$, for a smooth point $s \in E$.  
Alternatively,
there exists a classification of non-decomposable vector bundles on cycles of $\P^1$'s (see \cite[Theorem 2.12]{Drozd-Greuel}, and the account in \cite[Theorem 2.1]{Burban-Drozd}). 
One could proceed directly from this and show that each of those is a summand of a twisted complex in the elements on the right-hand side of \eqref{eq:Perf_generators}.
\end{proof}

Let us add a remark concerning the second strategy. To deal with line bundles, first recall that a line bundle on a cycle of three $\P^1$s is determined by its multidegree $(k,l,m) \in \Z^3$ and a parameter in $\C^\ast$.
The multidegree of a bundle is given by the degrees of its restrictions to each of the components of the cycle. The extra parameter comes from the fact that we are considering a cycle (rather than a chain) of $\P^1$s.
 One can build such line bundles as cones on the aforementioned objects by hand, proceeding iteratively. For instance, the line bundle of multi-degree $(1,1,1)$ and parameter $1 \in \C^\ast$ is given by 
\bq
\xymatrix{
\{  \{ \O(-s_1) \to \O \} \oplus \{ \O(-s_2) \to \O \} \oplus \{ \O(-s_3) \to \O \}  \ar[rr]^-{1+1+1} &&  \O  \}
}
\eq
where $1$ simply denotes the section with constant value $1$ of $\mathcal{H}om(\O,\O)$.

\subsection{Quasi-isomorphism between $D^b \Fuk(M)$ and $\Perf(D)$}

We will show that $D^\pi \Fuk(M)$ is quasi-isomorphic to $\Perf(D)$. This again follows from work of Lekili and Perutz \cite[Theorem A]{Lekili-Perutz}, together with a covering argument.

Let $T_0$ be the Milnor fibre of the two-variable $A_2$ singularity, i.e.~the once punctured torus with its ``standard'' Liouville form, and let $\mathcal{G}$ and $\mathcal{H}$ be two exact embedded Lagrangian $S^1$s with $\mathcal{G} \cdot \mathcal{H} =1$. Again, we will also denote by $\mathcal{G}$ and $\mathcal{H}$ the two associated Lagrangian branes given by equipping the Lagrangians with the non-trivial spin structures, and any gradings such that the intersection point $y \in \mathcal{G}\pitchfork \mathcal{H}$ has degree zero.

\begin{theorem}\cite[Theorem A]{Lekili-Perutz} \label{thm:Lekili-Perutz}
There is an $\Aoo$--functor $\bar{\phi}: \Fuk(T_0) \to \twvect(E)$ which induces an equivalence of derived categories
\bq
[\bar{\phi}]: D^{\pi} \Fuk(T_0) \cong \Perf (E).
\eq
Here $\Fuk(T_0)$ is the Fukaya category of $T_0$ (defined again as in \cite[Section 12]{Seidel_book}) and $E$ is a nodal elliptic curve. The functor $\bar{\phi}$ sends $\mathcal{G}$ to $\O_E$, and $\mathcal{H}$ to $i_\ast \O_{s}$, where as before $s$ is a smooth point.
(Formally, we are using the complex $\{ \O(-s) \to \O \}$ for $i_\ast \O_s$.)
Moreover, 
$\bar{\phi}$ is an  $\Aoo$--quasi-isomorphism from the full subcategory on $\{ \mathcal{G},  \mathcal{H} \}$, the the full subcategory $\{ \O_E, i_\ast \O_s \}$, which split-generate $\text{tw\,}\Fuk(T_0)$, respectively $\twvect (E)$. 
\end{theorem}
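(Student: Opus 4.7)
The plan is to establish the equivalence in three main steps: verify split-generation on both sides, identify the underlying graded endomorphism algebras of the chosen generators, and then upgrade this cohomology-level isomorphism to an $\Aoo$-quasi-isomorphism by a deformation-theoretic rigidity argument.

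For split-generation, observe that $\mathcal{G}$ and $\mathcal{H}$ form a distinguished collection of vanishing cycles on $T_0$, which is (topologically and symplectically) the Milnor fibre of the two-variable $A_2$ singularity $x^3+y^2$. The same argument used above for $D_4$ applies: Seidel's \cite[Proposition 18.17]{Seidel_book} combined with \cite[Lemma 4.16]{Seidel_gradedlagrangians} yields $\text{tw\,}\Fuk(T_0) = \langle \mathcal{G}, \mathcal{H}\rangle$. On the algebraic side, the Burban--Drozd classification of indecomposable objects on a nodal curve implies that every indecomposable perfect complex is generated under cones and summands by $\O_E$ and the skyscraper at any smooth point; equivalently, by the two objects on the right of \eqref{eq:Perf_generators}.

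Next, I would compute the underlying graded endomorphism algebras on both sides and exhibit an algebra isomorphism. Both yield the same quiver with two vertices: each vertex carries self-morphisms $\C \oplus \C[-1]$ (from $H^\ast(S^1)$ symplectically, and from explicit $\Ext$ computations algebraically), and between the two vertices one has one-dimensional morphism spaces in the degrees dictated by Serre duality. Matching $\mu^2$-products, computed symplectically via holomorphic triangles drawn on the universal cover of $T_0$ and algebraically via Yoneda composition on the nodal cubic, then yields a cohomology-level algebra isomorphism.

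The decisive step is to match the full $\Aoo$-structures. The Fukaya category of the once-punctured torus is not formal (see \cite{Lekili-Perutz-PNAS}), so matching $\mu^2$ alone is insufficient. The approach would be to compute the second Hochschild cohomology of the underlying graded algebra (for instance through a Koszul-type resolution) and show that, up to gauge equivalence, the minimal $\Aoo$-refinements are parametrised by a single scalar. On the symplectic side, this scalar is a Massey product captured by a specific immersed holomorphic quadrilateral on $T_0$, enumerable explicitly on the universal cover; on the algebraic side, it is a direct $\Ext$-level calculation on $E$. Both are nonzero and agree after rescaling generators, producing an $\Aoo$-quasi-isomorphism on the chosen subcategories which one then extends through twisted complexes and split closure to yield the functor $\bar\phi$ of the statement.

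The hard part is precisely this last step. The Hochschild-cohomology rigidity computation and the accompanying symplectic-side Massey product count both require careful bookkeeping of signs, spin structures and gradings, and the non-formality of $\Fuk(T_0)$ means these higher products are genuinely essential rather than formal consequences of $\mu^2$. Once the relevant scalars on each side are pinned down and seen to coincide, the remaining extension of $\bar\phi$ to the full categories and the derivation of the derived equivalence $D^\pi \Fuk(T_0) \cong \Perf(E)$ are formal.
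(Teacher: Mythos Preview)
The paper does not prove this theorem at all: it is quoted from \cite[Theorem~A]{Lekili-Perutz}, and the paper's ``proof'' consists of pointers to the relevant parts of that reference (points (i) and (iii) of their Theorem~A, their Lemma~3.1, Section~6.1, and the outline on their page~8), together with the observation that although the results there are stated over $\mathrm{Spec}(\Z)$, the arguments go through over $\C$. In the context of this paper, nothing more is needed or intended.

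Your sketch is not a comparison target for the paper's proof but rather an outline of the Lekili--Perutz argument itself, and as such it is broadly faithful to their strategy: split-generation on both sides, identification of the cohomology-level algebra, and then a Hochschild-cohomology/deformation-theoretic classification of minimal $\Aoo$-structures to pin down the higher products. A couple of caveats if you were to pursue this as an independent proof. First, the classification step is more delicate than ``a single scalar'': Lekili--Perutz identify the gauge-equivalence classes of $\Aoo$-structures on the relevant two-object algebra with a moduli of Weierstrass cubics, and the nodal curve $E$ arises as a specific degenerate point; over $\C$ this amounts to locating the correct point on (a compactification of) the $j$-line, not merely checking one Massey product is nonzero. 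Second, your use of Seidel's weighted-homogeneous criterion for split-generation on the $A_2$ Milnor fibre is fine (the weights are $(1/3,1/2)$, and $1/3+1/2\neq 1$), but note that Lekili--Perutz themselves establish generation differently, and in any case the paper here simply cites their Lemma~3.1 for the algebraic side.

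In short: your proposal is a reasonable sketch of the \emph{cited} proof, but the paper's own approach is to invoke the result as a black box.
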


\begin{proof}
The bulk of the claim is stated explicitly in  points (i) and (iii) of \cite[Theorem A]{Lekili-Perutz}. While the results in the introduction of \cite{Lekili-Perutz} are stated over $Spec(\Z)$, notice that their proofs are set up for coefficients in a class of rings which includes the complex numbers $\C$.
For the second sentence, see Lemma 3.1 and Section 6.1 of the same article, together e.g.~with the ``Outline of method'' summary of page 8. 
\end{proof}

There is a 3:1 unbranched cover $\sigma: M \to T_0$. We pick $\mathcal{G}$ and $\mathcal{H}$ such that $\sigma$ maps $\cP$, $\cQ$ and $\cR$ to $\mathcal{H}$, and $\E$ to (a triple copy of) $\mathcal{G}$. See Figure \ref{fig:D_4}. 
Moreover, because there was a $\Z/3$ symmetry in our original choices for $\cP_1$, $\cQ_1$ and $\cR_1$, we can assume that the gradings and marked points (recording spin structures) are compatible with the covering.

\begin{figure}[htb]
\begin{center}
\includegraphics[scale=0.26]{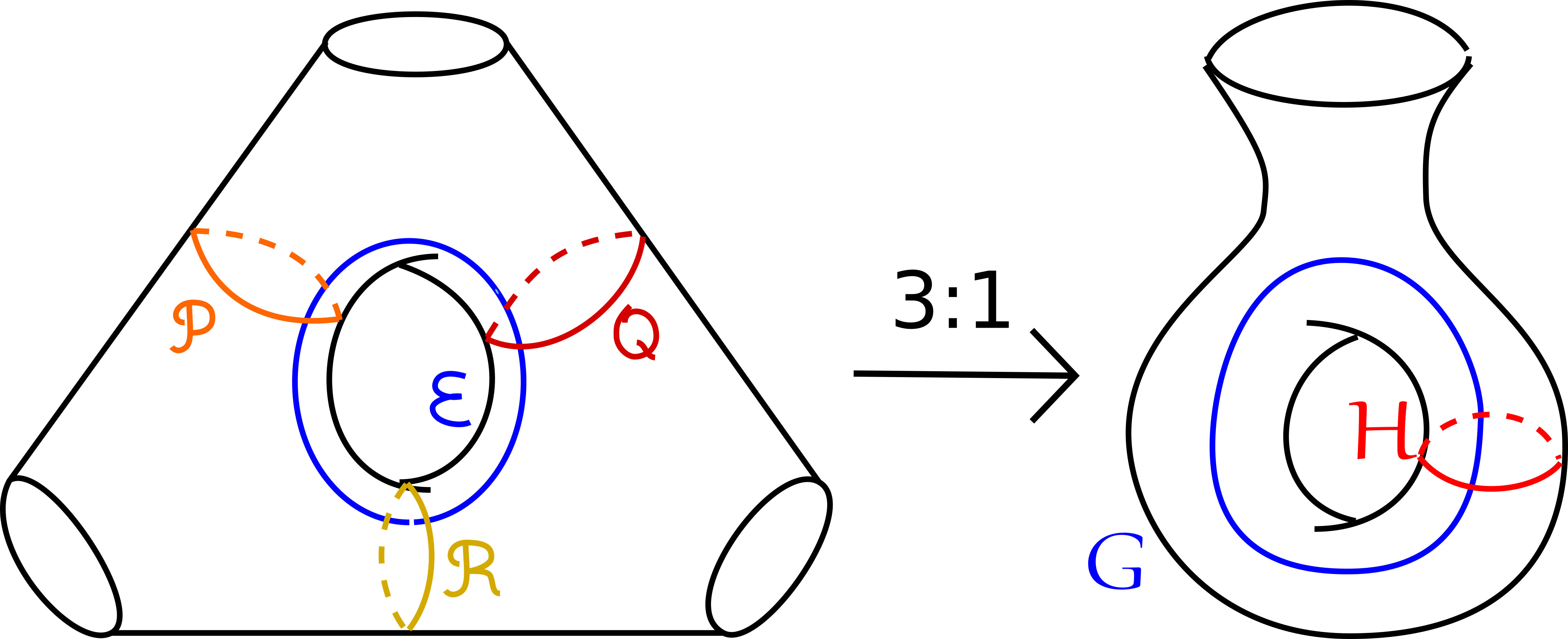}
\caption{Unbranched 3:1 cover from the $D_4$ Milnor fibre, $M$, to the $A_2$ Milnor fibre.}
\label{fig:D_4}
\end{center}
\end{figure}

On the other hand, there is 3:1 unbranched cover $\rho: D \to E$, and we can pick the smooth points $s_i \in D_i$ such that all three map to $s$. Moreover, a choice of finite affine open cover for $E$ lifts to give one for $D$.

\begin{proposition}\label{pro:iso2}
There is an $\Aoo$--functor $\phi_\B: \Fuk(M) \to \twvect(D)$ such that 
\begin{eqnarray}
\phi_{\B} (\cP) = \{ \O(-s_1) \to \O \} &
 \phi_{\B} (\cQ) = \{ \O(-s_2) \to \O \} &
 \phi_{\B} (\cR)  = \{ \O(-s_3) \to \O \} \\
& \phi_{\B} (\E) = \O_D &
\end{eqnarray}
and it induces an equivalence of derived categories:
\bq
\phi_{\B}: D^\pi \Fuk(M) \to \Perf(D).
\eq
\end{proposition}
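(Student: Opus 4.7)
The plan is to bootstrap from the Lekili--Perutz equivalence (Theorem \ref{thm:Lekili-Perutz}) using the compatible $3{:}1$ unbranched covers $\sigma: M \to T_0$ and $\rho: D \to E$, which were set up precisely so that $\sigma^{-1}(\mathcal{H}) = \cP \sqcup \cQ \sqcup \cR$, $\sigma^{-1}(\mathcal{G}) = \E$ (as a connected triple cover), and correspondingly $\rho^{-1}(s) = \{s_1, s_2, s_3\}$. My first step would be to reduce the claim to constructing an $\Aoo$--quasi-isomorphism between the full subcategory of $\Fuk(M)$ on $\{\cP, \cQ, \cR, \E\}$ and the full subcategory of $\twvect(D)$ on $\{\O_D, \{\O(-s_1) \to \O\}, \{\O(-s_2) \to \O\}, \{\O(-s_3) \to \O\}\}$. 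Since both sets split-generate the respective ambient categories, such a quasi-isomorphism sending objects as specified extends automatically to an $\Aoo$--functor $\phi_\B$ and yields the claimed derived equivalence.

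I would then describe the two small categories in parallel by pulling data back from $T_0$ and $E$. On the symplectic side, $\sigma$ being a local biholomorphism lets me lift a generic Floer datum on $T_0$ to one on $M$; since our chosen Lagrangians are all components of $\sigma^{-1}(\mathcal{H})$ and $\sigma^{-1}(\mathcal{G})$, holomorphic polygons in $M$ with boundary on $\{\cP, \cQ, \cR, \E\}$ correspond bijectively to holomorphic polygons in $T_0$ with boundary on $\{\mathcal{H}, \mathcal{G}\}$, together with a choice of lift of one boundary point; the remaining lifts are then forced by the monodromy of $\sigma$. In particular the $\Aoo$--operations on $\{\cP, \cQ, \cR, \E\}$ are determined by those on $\{\mathcal{H}, \mathcal{G}\}$ plus the combinatorial data of the $\Z/3$--cover. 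Analogously, choosing the affine cover of $D$ to be the $\rho$--pullback of the one fixed in Theorem \ref{thm:Lekili-Perutz} and using the projection formula for the étale map $\rho$, the \v{C}ech complexes computing morphisms and compositions in $\twvect(D)$ among the four distinguished objects likewise split according to the sheets of $\rho$ and reduce to \v{C}ech data for $\{\O_E, \{\O(-s) \to \O\}\}$ on $E$.

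The main obstacle is then to check that the two sheet-labellings are identified compatibly: namely, that the quasi-isomorphism $\bar\phi$ of Theorem \ref{thm:Lekili-Perutz} intertwines the decompositions induced by the deck groups of $\sigma$ and $\rho$. Thanks to the $\Z/3$--symmetry of our choices of $\cP, \cQ, \cR$ and $s_1, s_2, s_3$, this reduces to bookkeeping, which I would verify by comparing a small number of explicit generators (e.g.~the generator of $hom(\cP, \E)$ on the symplectic side with that of $\Ext^0(\{\O(-s_1) \to \O\}, \O_D)$ on the algebraic side, and likewise for the two self-Floer groups of $\E$ versus $\Ext^\ast(\O_D, \O_D)$, using that $D$ is Gorenstein with $\omega_D \cong \O_D$). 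Once the sheet-compatibility is established, the Lekili--Perutz quasi-isomorphism assembles sheet by sheet into the required quasi-isomorphism of full subcategories, from which Proposition \ref{pro:iso2} follows upon passing to the split-closure.
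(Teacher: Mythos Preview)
Your proposal is correct and follows essentially the same route as the paper: reduce to the full subcategories on the four split-generators, lift Floer perturbation data and the affine open cover through the respective $\Z/3$ covers $\sigma$ and $\rho$, and then lift the Lekili--Perutz functor $\bar\phi$ to the covers. The paper phrases the last step simply as choosing the lift of $\bar\phi$ determined by the object assignments $\cP \mapsto \{\O(-s_1)\to\O\}$ etc., whereas you phrase it as matching sheet-labellings, but these amount to the same verification.
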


\begin{proof}
The $\Z/3$ action on $M$ induces a $\Z/3$ action on the objects of $\Fuk(M)$, which needn't respect morphisms or the $\Aoo$-- structure. 
Let $\bar{B}_\F$ be the cohomology category associated to the full subcategory of $\Fuk(T_0)$ on  $\{ \mathcal{G}, \mathcal{H} \}$, and let $B_\F$ be the cohomology category associated to the full subcategory of $\Fuk(M)$ on $\{ \cP, \cQ, \cR, \E \}$.
Equip $\bar{B}_\F$ with any $\Aoo$--structure which makes it a strictly unital quasi-isomorphic representative of the full subcategory of $\Fuk(T_0)$ on $\{ \mathcal{G}, \mathcal{H} \}$, say $\bar{\mathcal{B}}_\F$. (This is automatically minimal.)
This lifts to an $\Aoo$--structure on $B_\F$; the result, say $\mathcal{B}_\F$, is itself a quasi-isomorphic representative of the full subcategory on $\{ \cP, \cQ, \cR, \E \}$ of the Fukaya category of $M$: consistent universal choices of perturbation data for $\{ \mathcal{G}, \mathcal{H} \}$ lift to consistent universal choices for $\{ \cP, \cQ, \cR, \E \}$, as do $\Aoo$--functors giving quasi-isomorphisms between two different choices. By construction, the resulting choice of $\Aoo$--representative for $\Fuk(M) \subset \text{tw}^\pi (\B_\F)$ is $\Z/3$ equivariant. 

The $\Z/3$ action on $D$ induces one on $\twvect(D)$, with quotient $\twvect(E)$. Now pick a functor $\bar{\phi}: \text{tw}^\pi \bar{\mathcal{B}}_\F \to \twvect(D)$, with $ \bar{\phi}_{\B}(\mathcal{H}) = i_\ast (\O_s)$, and $ \bar{\phi}_{\B} (\mathcal{G}) = \O_E$, as in Theorem \ref{thm:Lekili-Perutz}. This restricts to an $\Aoo$--functor 
$ \bar{\phi}_{\B}: \bar{\mathcal{B}}_\F \to \twvect(D)$, which in turn lifts to an $\Aoo$--functor $\phi_{\B}: \mathcal{B}_\F \to \twvect(E)$, with the choice of lift determined by requiring $\phi_{\B} (\cP) =  \{ \O(-s_1) \to \O \}$, $\phi_{\B} (\cQ) =  \{ \O(-s_2) \to \O \}$ and $\phi_{\B} (\cR) =  \{ \O(-s_3) \to \O \}$. Taking split-closure and passing to cohomology gives an $\Aoo$--functor $D^\pi \B_\F \to \Perf(D)$, which, by construction, is an equivalence.
\end{proof}

\begin{remark}[Version 2 of the article]
An elegant proof of a more general theorem, for cycles of $\P^1$s of arbitrary length, is now available in a recent preprint 
Lekili and Polishchuk \cite[Theorem A]{Lekili-Polishchuk} . (This only appeared after the first arXiv version of the present article.)
\end{remark}

We record the following.
\begin{definition}
$\B_\F$ denotes a strictly unital, minimal model for the full subcategory of $\Fuk(M)$ on the objects $\{ \cP, \cQ, \cR, \E \}$, as above.  Moreover, we define $\B_\tC$ to be the full subcategory of $\twvect(D)$ on $ \{ \O(-s_1) \to \O \},  \{ \O(-s_2) \to \O \},  \{ \O(-s_3) \to \O \}$ and $\O_D$. 
\end{definition}

By Proposition \ref{pro:iso2}, $\phi_\B$ restricts to a quasi-isomorphism $\B_\F \to \B_\tC$. 
Let $\psi_{\B}$ denote a quasi-isomorphism $\B_\tC \to \B_\F$ which is a homotopy inverse to $\phi_\B$. By construction, on the level of object and morphisms, this is given by passing to cohomology; it transfers the dg structure on $\B_\tC$ to a minimal $\Aoo$ structure on its cohomology.

\begin{remark}
We expect there to be an equivalence of triangulated categories
\bq
D^b \mathcal{W} (M) \to D^b \Coh (D)
\eq
where $\mathcal{W}(M)$ is the wrapped Fukaya category of the three-punctured elliptic curve $M$ (note that this is already idempotent-closed), following \cite[Theorem A(iv)]{Lekili-Perutz}. 
\end{remark}

\section{Restriction functors and localization}\label{sec:restrictions}

\subsection{The restriction $c_{\Fuk}: \A_\F \to \B_\F$}\label{sec:Fuk_restriction}

A Lagrangian brane for $\A_\F$ is also a Lagrangian brane for $\B_\F$. This induces an $\A_\infty$--functor  $c_{\Fuk}: \A_\F \to \B_\F $. 
Explicitly, on objects, we have:
\begin{itemize}
\item $c_{\Fuk} (\cP_i) = \cP$, $c_{\Fuk} (\cQ_j) = \cQ$, and $c_{\Fuk} (\cR_k) = \cR$, for all possible $i,j,k$;
\item $c_{\Fuk} (\E_1) = \E$, $c_{\Fuk} (\E_2) = \E'$, and $c_{\Fuk} (\E_3) = \E''$. 
\end{itemize}
where $\E'$ is the object in $\Fuk(M)$ which is the result of performing a positive Dehn twist in $\cP$, in $\cQ$ and in $\cR$ to $\E$ (with their Lagangian brane data), and  $\E''$ is the Lagrangian brane which is a result of performing a further positive Dehn twist in $\cP$, in $\cQ$ and in $\cR$ to $\E'$.

The action on morphisms is given by viewing the intersection points between Lagrangian branes in  $\textrm{Ob}(\A_\F)$ as intersection points between Lagrangian branes in $\textrm{Ob}(\B_\F)$. For any brane $V$, the unit element in $hom_\A(V,V)$, ``artificially'' introduced in Section \ref{sec:directed_Fukaya}, is mapped to the unit in $hom_\B (V,V)$, say $e^V$. Moreover, for any $1 \leq i < j \leq p$, the degree zero element $e^{\cP}_{i,j} \in hom(\cP_i, \cP_j)$ is mapped to $e^{\cP}$, and the degree one element $x^{\cP}_{i,j}$ is mapped to the standard generator of $hom^1_\B (\cP, \cP) \cong \C$, say $x^{\cP}$. Similarly for $\cQ$ and $\cR$. Altogether, this data naturally defines a  $\Aoo$--functor
\bq
c_{\Fuk}: \A_\F \to \B_\F
\eq
with maps $c^1_{\Fuk}: hom_{\A} (X_0, X_1) \to hom_{\B}(c_{\Fuk} X_0, c_{\Fuk} X_1)$, and where all of the higher-order maps on tensors of morphism spaces ($c^i_{\Fuk}$ for $i > 1$) vanish.

`Artificially' add $p-1$, $q-1$ and $r-1$ copies of, respectively, $\cP$, $\cQ$ and $\cR$ to $\B_{\F}$ to get a category with $p+q+r+3$ objects, say $\B^+_{\F}$, which is quasi-isomorphic to $\B_{\F}$. We can think of $\B^+_{\F}$ as a $\Aoo$--algebra over the semi-simple ring $R$, and of $\A_{\F}$ as a subalgebra of $\B_{\F}$. Now $c_{\Fuk}: \A_{\F} \to \B^+_{\F}$ is simply an inclusion of categories. $\B^+_{\F}$ inherits the structure of $\Aoo$--bimodule over $\A_{\F}$ by restricting the diagonal bimodule on $\B^+_{\F}$.

Of course, $c_{\Fuk}$ is not a full inclusion. However, whenever a morphism groups in $\A_\F$, say $hom_{\A_\F}(X_i, X_{i+1})$, is non-trivial, then $c_{\Fuk}:hom_{\A_\F}(X_i, X_{i+1}) \to hom_{\B^+_\F}(c_{\Fuk} (X_i), c_{\Fuk} (X_{i+1}))$ is an isomorphism of vector spaces, and the $\Aoo$--operations between such morphism spaces agree.

\subsection{The pull-back $\twvect\left(Y_{pq,r} \right) \to \twvect(D)$ }

\subsubsection{First order}

Pick a finite affine cover of $Y_{p,q,r}$; this induces one on $D$ by intersection. Given these choices, 
 the inclusion $\iota: D \to Y_{p,q,r}$ induces a dg functor
\bq
c_{\vect}: \twvect(Y_{p,q,r}) \to \twvect(D)
\eq
which one might think of as a dg enhancement of the usual pull-back map $$\iota^\ast: D^b \Coh(Y_{p,q,r}) \to \Perf(D).$$ 
From  \eqref{eq:resolution}, one readily reads off that
\begin{eqnarray}
c_\ve (\D_{\cP, i})  = \{ \O_D(-s_1) \to \O_D \}  \\
c_\ve (\D_{\cQ, j})  = \{ \O_D(-s_2) \to \O_D \}  \\
c_\ve (\D_{\cR, k})  = \{ \O_D(-s_3) \to \O_D \}
\end{eqnarray}
for all possible $i,j$ and $k$. 
We have that
\begin{multline} \pi^\ast (\O_{\P^2}(1)) = \pi^\ast (\O_{\P^2}(H)) =  \\
\O_{Y_{p,q,r}} \left( E^{\cP}_1 + \widetilde{E}^{\cP}_2+ \ldots + \widetilde{E}^{\cP}_p +
E^{\cQ}_1 + \widetilde{E}^{\cQ}_2+ \ldots + \widetilde{E}^{\cQ}_q +
E^{\cR}_1 + \widetilde{E}^{\cR}_2+ \ldots + \widetilde{E}^{\cR}_r +
\widetilde{H} \right).
\end{multline}
This implies that the coherent sheaf $c_\ve \pi^\ast ( \O_{\P^2}(1) )$ is a line bundle of multidegree $(1,1,1)$; by symmetry considerations, it correspond to the parameter $1 \in \C$.
  As discussed at the end of Section \ref{sec:PerfD}, there is an exact triangle
\bq
\xymatrix{ 
 i_\ast \O_{s_1} \oplus i_\ast \O_{s_2} \oplus i_\ast \O_{s_3} \ar[rr]^-{(1\oplus 1 \oplus 1)[1]} & & \O_D  \ar[dl]  \\  & \iota^\ast \left( \pi^\ast ( \O_{\P^2}(1) ) \right) \ar[ul] &
}
\eq
where we are using the standard identification 
$\Ext^\ast (i_\ast \O_{s_j} , \O_D ) = \Ext^0 (i_\ast \O_{s_j}, \O_D ) = \C$, and write $i_\ast \O_{s_1}$ to represent the corresponding resolution, as above. 
In particular, we recognize that $\iota^\ast \pi^\ast ( \O_{\P^2}(1) )$ is the positive twist of $\O_D$ in 
$i_\ast \O_{s_1}$, $i_\ast \O_{s_2}$ and $i_\ast \O_{s_3}$. 
Similarly, $\iota^\ast \pi^\ast ( \O_{\P^2}(2) )$ is the result of three further positive twists on $\iota^\ast \pi^\ast ( \O_{\P^2}(1) )$, one in each of the $i_\ast \O_{s_j}$. 
This yields the following.

\begin{lemma}\label{thm:coho_agrees}
The maps $c_{\Fuk}$ and $c_\ve$ are compatible on the level of cohomology: the diagram
\bq
\xymatrix{
A_\F  \ar[r]^{H(c_{\Fuk})} \ar[d]_{\phi_\A} & B_\F \ar[d]^{H(\phi_\B)}  \\
A_\tC \ar[r]^{H(c_{\ve})}  & B_\tC  
}
\eq
commutes.
\end{lemma}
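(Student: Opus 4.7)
The plan is to verify commutativity separately on objects and on morphism generators, since all four maps descend to $\mathbf{C}$-linear graded functors between ordinary categories.

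On objects, the check is immediate from data already assembled. For each $\cP_i$, both paths produce $\{\O_D(-s_1)\to\O_D\}$: the top-right route uses $c_{\Fuk}(\cP_i)=\cP$ together with the defining property of $\phi_\B$, while the bottom-left uses the explicit computation of $c_\ve(\D_{\cP,i})$ performed immediately before the lemma. Analogous arguments handle $\cQ_j$, $\cR_k$, and $\E_1$ (which both yield $\O_D$). For $\E_2$ and $\E_3$, I would use that on the Floer side $\E^{(k)}$ arises from $k$ iterated positive Dehn twists of $\E$ along $\cP,\cQ,\cR$, realising $\E^{(k)}$ at the level of $B_\F$ as an iterated mapping cone built from $\cP,\cQ,\cR,\E$; pushed through $H(\phi_\B)$, this cone becomes exactly the exact triangle displayed in the paragraph preceding the statement, whose third vertex is $\iota^*\pi^*\O_{\P^2}(k)$.

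On morphisms, it suffices to check the generators listed in Lemma \ref{lem:full_quiver}, which split into three families. First, the $e^\cP_{i,j}$ and $x^\cP_{i,j}$ inside the $\cP$-chain (and their $\cQ$-, $\cR$-analogues): on the top-right, $c_{\Fuk}$ sends these to $e^\cP, x^\cP$ and then $H(\phi_\B)$ to the canonical degree-zero and degree-one generators of $hom_{\B_\tC}^*(\{\O_D(-s_1)\to\O_D\},\{\O_D(-s_1)\to\O_D\})$; on the bottom-left, $\phi_\A$ sends them to $\e^\cP_{i,j},\x^\cP_{i,j}$, and then $c_\ve$ collapses the full chain of $-2$-curves to a single point of $D$, producing the same generators. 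Second, the $y^\cP_{p,1}$-type generators: both paths yield the canonical morphism $\{\O_D(-s_1)\to\O_D\}\to\O_D$ induced by the inclusion of structure sheaves. Third, the $a_i,b_i,b_{i,j},c_i$ between the $\E$'s: $\phi_\A$ identifies these with monomials of degree one and two in $u,v,w$, which $c_\ve$ restricts to $D$; the top-right path identifies them with the structural morphisms of the iterated mapping cones defining $\iota^*\pi^*\O_{\P^2}(k)$.

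The main obstacle is the third family, where the correspondence between Dehn-twist structure morphisms in $\Fuk(M)$ and hyperplane-section morphisms in $\twvect(D)$ must be pinned down explicitly. My approach is to exploit the $\mathbf{Z}/3$-symmetry permuting $\{\cP,\cQ,\cR\}$ (and their algebraic counterparts $\{s_1,s_2,s_3\}$): both $\phi_\B$ and the mapping-cone presentation of $\iota^*\pi^*\O_{\P^2}(1)$ were constructed equivariantly, so it suffices to match one representative, say $a_1$, with one algebraic monomial $u$, and let the rest follow by symmetry. The single remaining check reduces to a computation in local coordinates around one node of $D$, using that $\phi_\B$ is a $\mathbf{Z}/3$-equivariant lift of the Lekili--Perutz functor $\bar\phi$ and therefore intertwines the Dehn-twist exact triangle in $B_\F$ with the mapping-cone triangle in $B_\tC$; the morphism of triangles then dictates the identification of the $a_i,b_i,c_i$ generators with their algebraic images, concluding the verification.
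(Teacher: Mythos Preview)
Your proposal is essentially correct and follows the paper's own approach, though you supply considerably more detail than the paper does. In the paper, this lemma is not given a separate proof at all: it is stated immediately after the computation that $c_\ve(\D_{\cP,i}) = \{\O_D(-s_1)\to\O_D\}$ (and analogues) and the observation that $\iota^\ast\pi^\ast\O_{\P^2}(k)$ is the $k$-fold positive twist of $\O_D$ in the three skyscraper sheaves, matching the description of $c_{\Fuk}(\E_k)$ as the $k$-fold Dehn twist of $\E$. The sentence ``This yields the following'' is the entire argument; morphism-level compatibility is left implicit.

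Your object-level check is exactly the paper's. Your morphism-level verification is a genuine addition. The first two families are unproblematic, as you say. For the third family you correctly identify the only real content: one must know that $H(\phi_\B)$ carries the three intersection points $a_i \in hom(\E,\E')$ to the restrictions $u|_D, v|_D, w|_D$ (up to the chosen normalization), and similarly for the $c_i$ and $b_i$. Your use of $\Z/3$-equivariance is apt and in the spirit of how $\phi_\B$ was constructed, but note that equivariance alone only pins down the identification up to a global scalar and a cyclic shift. The cyclic shift is nailed by the relations you already checked in the second family (e.g.\ $y^{\cP}_{p,1}\cdot a_3 = 0$ forces $a_3$ to the section vanishing on the component through $s_1$), and the scalar ambiguity is harmless since the target morphism spaces are one-dimensional in the relevant directions. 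So your reduction works, but the ``single remaining check in local coordinates'' is better phrased as: compatibility with the already-verified relations of Lemma~\ref{lem:full_quiver} forces the identification, with no further computation needed.
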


\subsubsection{A better dg enhancement of $\Perf(D)$} We will later want to understand, loosely, the structure of $\twvect(D)$ as a $\Aoo$--bimodule over $\twvect(Y_{p,q,r})$. This will be made easier by switching to the following dg enhancement for $\Perf(D)$, following the strategy used in \cite[p.~105]{Seidel_subalgebras} for the affine case.

Let $s$ be a section of $\O_{Y_{p,q,r}}(D)$, with $s^{-1}(0)= D$. Fix an affine open cover $\mathfrak{U}$ of $Y_{p,q,r}$. The key observation  in \cite[p.~105]{Seidel_subalgebras} is that given any $U \in \mathfrak{U}$, we may replace $\C[U\cap D]= \Gamma(U, \O_D)$ with 
\bq \left(\Gamma(U, \O_{Y_{p,q,r}})[\varepsilon], \partial \right) = \left( \C[U \cap Y_{p,q,r}] [\varepsilon], \partial \right), \quad |\varepsilon| = -1 \text{\, and \,} \partial \varepsilon = s.
\eq
Fix $E, F \in  \text{Ob}\left( \text{vect}(Y_{p,q,r}) \right)$. 
Now define
\bq
hom_s (E,F) = \left( \check{C}^{\ast}(\mathfrak{U}; \mathcal{H}om(E,F)[s] ), \partial    \right)
\eq
where  $\Gamma(U, \mathcal{H}om(E,F)[s] ) = \Gamma(U, \mathcal{H}om(E,F))[s]$, and the differential $\partial$ is obtained by combining the \v{C}ech differential with $\partial \varepsilon = s$. Call the resulting dg category $\text{vect}_s (Y_{p,q,r})$, and the associated category of twisted complexes $\twvect_s (Y_{p,q,r})$. Let $\B^+_\tC$ be the full subcategory on $p+q+r+3$ objects given by the resolutions of the objects in the full exceptional sequence for $Y_{p,q,r}$. By construction, there is a quasi-isomorphism of dg categories $\B^+_\tC \to \B_\tC$. Moreover, the `restriction' map $\A_\tC \to \B^+_\tC$, which we will still denote by $c_\ve$, is now simply an inclusion of categories. This gives $\B^+_\tC$ the structure of an $\Aoo$--bimodule over $\A_\tC$.

As before with $(\A_\F, \B_\F^+)$, the inclusion $c_\ve: \A_\tC \to \B^+_\tC$ is of course not a full inclusion. However, again, for those morphism groups in $\A_\tC$ which are non trivial, the inclusion $$c_\ve: hom_{\A_\tC}(X_i, X_{i+1}) \subset hom_{\B^+_{\tC}}(c_\ve(X_i), c_\ve(X_{i+1}))$$ is essentially surjective, and the dg operations $\mu^1$ and $\mu^2$ agree.


\subsection{Formality of $\A_\text{C}$ and an equivalence of derived categories}
There is a quasi-equivalence of $\Aoo$--categories $\B^+_\tC \to \B_\F^+$, which as before we denote by $\psi_\B$. This is given by passing to the cohomology category and transferring the $\Aoo$--structure. By Lemma \ref{thm:coho_agrees}, on the level of objects and morphisms, the image of $\A_\tC$ is $\A_\F$. Moreover, $\psi_B$ actually restricts to a functor from $\A_\tC$ to $\A_\F$: the image of the restriction of the maps
\bq
\psi^d_{\B}: hom_{\B^+_\tC} (X_{d-1}, X_d) \otimes \ldots  hom_{\B^+_\tC} (X_{0}, X_1)
\to
 hom_{\B^+_\F} (\psi_\B(X_{0}), \psi_\B(X_d) )
\eq
to morphisms in $hom_{\A_\tC}(X_{i-1}, X_i)$ necessarily lies in $hom_{\A_\F} (\psi_\B(X_{0}), \psi_\B(X_d) )$, by the observation at the end of Section \ref{sec:Fuk_restriction}. Call this functor $\psi_\A$. It induces an isomorphism of cohomology categories, and so it is a quasi-isomorphism of $\Aoo$--categories.

\begin{corollary}\label{cor:iso1} \label{thm:iso1}
$\phi_\A$ induces an equivalence of categories
\bq
D^b \Fuk^{\to} (\Xi) \cong D^b \Coh(Y_{p,q,r} ).
\eq
\end{corollary}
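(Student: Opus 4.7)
The plan is to deduce the equivalence by packaging together the pieces already assembled: the algebra isomorphism $\phi_\A$ from Proposition~\ref{prop:isomorphic_sequences}, the formality of $\A_\F$ from Lemma~\ref{lem:Ainfty_structure}, and the restriction functors $c_{\Fuk}$ and $c_\ve$ into the ``bigger'' $\B$-categories.

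First I would identify both sides of the desired equivalence as derived categories of small $\Aoo$-subcategories. On the symplectic side, $\A_\F$ \emph{is} the full subcategory of the directed Fukaya category of $\Xi$ on the distinguished collection of vanishing cycles \eqref{eq:vanishing_cycles}, so by construction $D^b\Fuk^{\to}(\Xi)=H^0(\text{tw}\,\A_\F)$. On the algebraic side, Lemma~\ref{lem:exceptional_sequence} exhibits the objects of $\A_\tC$ as a full exceptional collection in $D^b\Coh(Y_{p,q,r})$, and $\A_\tC$ sits inside the dg enhancement $\twvect(Y_{p,q,r})$, so the general theory of full exceptional collections (combined with \cite[Theorem 2.14]{Lunts-Orlov} for the choice of enhancement) gives $H^0(\text{tw}\,\A_\tC)\cong D^b\Coh(Y_{p,q,r})$.

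Next, I would argue that the $\Aoo$-functor $\psi_\A: \A_\tC \to \A_\F$ constructed in the paragraph preceding the corollary is a quasi-isomorphism. By its construction, $\psi_\A$ is obtained by restricting the homological-transfer quasi-equivalence $\psi_\B: \B^+_\tC \to \B^+_\F$; its action on objects and on first-order morphism maps is read off from Lemma~\ref{thm:coho_agrees}, and on cohomology it coincides with $\phi_\A^{-1}$ from Proposition~\ref{prop:isomorphic_sequences}, which was shown to be an isomorphism of graded $R$-algebras. So $H(\psi_\A)$ is an isomorphism of categories, i.e.\ $\psi_\A$ is a quasi-isomorphism of $\Aoo$-categories. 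Applying the standard fact that a quasi-isomorphism of $\Aoo$-categories extends to a quasi-equivalence of their categories of twisted complexes, and hence induces an equivalence on $H^0$, yields
\begin{equation*}
D^b\Coh(Y_{p,q,r}) \cong H^0(\text{tw}\,\A_\tC) \xrightarrow{\;H^0(\text{tw}\,\psi_\A)\;} H^0(\text{tw}\,\A_\F) = D^b\Fuk^{\to}(\Xi).
\end{equation*}

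The main conceptual obstacle is not the final derived-category assembly, which is formal, but the verification that $\psi_\B$ actually restricts to a map $\A_\tC \to \A_\F$ on the nose. This relies on the structural observation made at the end of Sections~\ref{sec:Fuk_restriction} (and the analogue for $c_\ve$ after switching to the $s$-twisted enhancement $\twvect_s(Y_{p,q,r})$): whenever $\hom_{\A}(X_i,X_{i+1})$ is non-trivial, the inclusion into $\hom_{\B^+}(X_i,X_{i+1})$ is an isomorphism and the low-order $\Aoo$-operations agree. Once that is in place, together with the formality of $\A_\F$, the higher maps $\psi_\B^d$ applied to tuples of $\A_\tC$-morphisms automatically land in the $\A_\F$ subspace, so the restriction defines $\psi_\A$ without any further choices.
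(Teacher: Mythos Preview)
Your proposal is correct and matches the paper's (terser) argument: both categories are identified with $H^0(\text{tw}\,\A_\F) \cong H^0(\text{tw}\,\A_\tC)$ via the quasi-isomorphism $\psi_\A$ constructed just before the corollary (the paper phrases this through $\perf$ rather than $\text{tw}$, noting they coincide here). One small remark on your last paragraph: the formality of $\A_\F$ is not actually needed for the restriction of $\psi_\B$ to land in $\A_\F$---directedness together with the observation that the non-trivial $\A$-hom spaces equal the corresponding $\B^+$-hom spaces already forces this, which is exactly how the paper argues it.
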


\begin{proof}
Both are quasi-isomorphic to $H^0 \left(\perf(\A_\F) \right) = H^0 \left(\perf(\A_\tC)\right)$, where $\perf(\A_\F)$ is the dg category of perfect $\Aoo$--modules over $\A_\F$. (Note that here $\perf(\A_\F)$ is quasi-isomorphic to $\text{tw}( \A_\F)$, the enlargement of $\A_\F$ to the dg category of its twisted complexes, as discussed in \cite[Example 7.11]{Seidel_lectures}.)
\end{proof}

While we shall not make further use of the following corollary in the present work, we feel it is worth recording as a potential ingredient for extensions. 
\begin{corollary}
Up to quasi-isomorphism, the $\Aoo$--category $\A_\tC$ is formal.
\end{corollary}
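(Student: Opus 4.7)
The proof should be immediate at this stage; essentially all the work has already been done. The plan is to quote the quasi-equivalence $\psi_\A : \A_\tC \to \A_\F$ constructed just above Corollary \ref{cor:iso1} and combine it with the formality of $\A_\F$ established in Lemma \ref{lem:Ainfty_structure}.

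More precisely, first I would recall that formality is an invariant of the quasi-isomorphism class of an $\Aoo$--algebra (equivalently, $\Aoo$--category): if $\mathcal{C}$ is quasi-isomorphic to a graded algebra viewed as an $\Aoo$--algebra with vanishing higher operations, then any quasi-isomorphic representative shares this property up to quasi-isomorphism. This is a standard consequence of the homological perturbation lemma and the invariance of the minimal model under $\Aoo$--quasi-isomorphism.

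Next I would invoke Lemma \ref{lem:Ainfty_structure}, which asserts that $\A_\F$ is minimal and formal: as an $\Aoo$--algebra over $R$ it coincides with its cohomology algebra $A_\F$, with $\mu^d = 0$ for $d \neq 2$. Then I would point to the construction of $\psi_\A : \A_\tC \to \A_\F$ given in the paragraph preceding Corollary \ref{cor:iso1}, where it is shown to be a quasi-isomorphism of $\Aoo$--categories obtained by restriction of $\psi_\B$. Composing these two facts yields that $\A_\tC$ is quasi-isomorphic to a formal $\Aoo$--category, hence formal up to quasi-isomorphism.

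There is essentially no obstacle beyond what has already been overcome: the genuine content lay in the explicit holomorphic polygon count that established minimality and formality of $\A_\F$ (i.e.\ verifying absence of $\mu^d$ for $d \geq 3$ on the Fukaya side) and in identifying $A_\F$ with $A_\tC$ as graded $R$--algebras (Proposition \ref{prop:isomorphic_sequences}), which together ensured that the transported $\Aoo$--structure on cohomology matches the algebraic one. Given these, the corollary is a one-line deduction and one could simply write: ``by Lemma \ref{lem:Ainfty_structure}, $\A_\F$ is formal, and $\A_\tC \simeq \A_\F$ via $\psi_\A$, so $\A_\tC$ is formal.''
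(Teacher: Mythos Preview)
Your proposal is correct and matches the paper's approach exactly: the corollary is stated without proof in the paper, as an immediate consequence of the quasi-isomorphism $\psi_\A: \A_\tC \to \A_\F$ constructed just before Corollary~\ref{cor:iso1} together with the formality of $\A_\F$ from Lemma~\ref{lem:Ainfty_structure}. Your one-line summary at the end is precisely what the paper intends.
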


The pairs $(\A_\F, \B^+_\F)$, $(\A_\tC, \B^+_\tC)$ give equivalent $\Aoo$--bimodule structures; we shall exploit this to prove the third equivalence announced in the introduction. 


\section{Localization and the wrapped Fukaya category} \label{sec:localization}

\subsection{Localization: generalities}

We are going to use a variation on the localization construction which is the main object of \cite{Seidel_subalgebras}. 
Consider a pair $(\A, \B)$, where $\B$ as an $\Aoo$--algebra over $R$, and $\A$ as a subalgebra of $\B$. While the constructions in the main body of \cite{Seidel_subalgebras} are for $\Aoo$--algebras over some ground field $\mathbb{K}$, rather than the ring $R$,  the main results and their proofs also hold over $R$, as explained at the start of Section 6 therein.

We briefly summarize the construction, and refer the reader to \cite[Sections 3 and 4]{Seidel_subalgebras} for details. Start with the following naive short exact sequence of $\Aoo$--bimodules over $\A$:
\bq
0 \to \A \to \B \to \B / \A \to 0. 
\eq
Let $\delta: \B / \A  \to \A$ be the map of $\Aoo$--bimodules over $\A$ which is boundary homomorphism of the exact sequence; we will mostly consider its shift $\delta[-1]: (\B / \A) [-1] \to \A$, which has degree zero.

In the case at hand, with $(\A, \B) = (\A_\F, \B_\F)$, there is a quasi-isomorphism $\B / \A \to \A^\vee [-1]$ of $\Aoo$--bimodules over $\A$, where $\A^\vee$ denotes the dual diagonal bimodule, and the naive short exact sequence is
\bq
0 \to \A \to \B \to \A^\vee[-1] \to 0.
\eq
This is proven in \cite{Seidel_FLI}: see Equation 2.19 therein, together with Propositions 2.1 and 3.1. (The main object of \cite{Seidel_FLI} is to study the boundary homomorphism $\delta: \A^\vee[-1] \to \A$.)

Let $\mathcal{V} = \perf(\A)$ be the dg category of perfect modules over $\A$, and let $V = H^0 (\mathcal{V})$. (Here we differ slightly from the set-up in \cite[Section 4]{Seidel_subalgebras}, which works with the larger category $\text{mod}(\A)$.) Consider the convolution functor
\bq
\Phi_{\A^\vee[-2]}: \text{mod} (\A) \to \text{mod}(\A).
\eq
In our case, as $\A$ is a  proper $\Aoo$--algebra, this restricts to a dg  functor
\bq
\Phi_{\A^\vee[-2]}: \text{perf} (\A) \to \text{prop}(\A)
\eq
where $\text{prop}(\A)$, the dg category of proper $\Aoo$--modules over $\A$, is quasi-isomorphic to $\perf(\A)$. Ignoring the $-2$ shift, this is a cochain-level implementation of the Serre functor for perfect modules. (For background exposition, the reader may wish to start with \cite[Lecture 7]{Seidel_lectures}, notably Examples 7.8 and 7.11.)  Following \cite{Seidel_subalgebras}, we set 
\bq
F = H^0 \left(\Phi_{\A^\vee [-2]} \right): V \to V.
\eq
Convolution with the diagonal bimodule $\A$ gives a dg functor $ \Phi_{A}: \text{perf} (\A) \to \text{perf}(\A)$
which is quasi-isomorphic to the identity: $H^0 (\Phi_\A) \cong \text{Id}$. Now $\delta[-1]: \A^\vee[-2] \to \A$ induces a natural transformation 
\bq
\Phi_{\delta[-1]}: \Phi_{\A^\vee[-2]} \to \Phi_{\A}.
\eq
On the cohomology level, this gives a natural transformation 
\bq
T = \left[  \Phi_{\delta[-1]} \right]: F \to \text{Id}.
\eq
The heart of \cite{Seidel_subalgebras} is the construction of a dg category $W_{(\A, \B)}$, which is shown to be the localization of $V$ along $T$. 

\subsection{Localization for $\Fuk^{\to}(\Xi)$ and $\Fuk(M)$}

To identify $W_{(\A_\F, \B^+_\F)}$ geometrically, we appeal to a theorem in work-in-progress of Abouzaid and Seidel.

\begin{theorem}\cite{Abouzaid-Seidel} \label{thm:Abouzaid-Seidel}
Let $\pi: N \to \C$ be a Lefschetz fibration with total space  a Liouville domain $N$ (after smoothing corners). Pick an ordered distinguished collection of vanishing cycles for $\pi$; say there are $k$ of them. Let $\A$ be the associated directed Fukaya category, and $\B$ the full subcategory of a smooth fibre on the same objects as $\A$. (The notation is chosen to agree both with the present text and earlier work of Seidel relating to the same structures, such as \cite{Seidel_FLI}, which considers the structure of $\B$ as an $\Aoo$--bimodule over $\A$.) There is an equivalence
\bq W_{(\A, \B)}\cong D^b\, \mathcal{W} (N) \eq
where $\mathcal{W}(N)$ is the wrapped Fukaya category of $N$.  
\end{theorem}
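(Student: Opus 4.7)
The plan is to build a cochain-level comparison functor from $W_{(\A,\B)}$ to $D^b\mathcal{W}(N)$ with Lagrangian thimbles providing the bridge, and to check it is an equivalence by combining a split-generation statement on the symplectic side with an identification of the relevant colimit with Seidel's localization on the algebraic side. I will work throughout with the thimbles $T_1,\ldots,T_k$ built from the chosen distinguished vanishing paths; by construction $T_i$ is an admissible object of $\mathcal{W}(N)$ whose restriction to the fibre $M$ recovers the vanishing cycle $V_i$.

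First, I would establish that the $T_i$ split-generate $\mathcal{W}(N)$. For compact exact Lagrangians in $N$ this follows from Seidel's iterated-thimble decomposition (Section 18 of his book), applied to a generic perturbation whose image in the base is a long arc transverse to a distinguished collection of vanishing paths. For non-compact admissible Lagrangians one reduces to the compact case via a wrapping and cone argument at infinity, or invokes a generation criterion of Abouzaid type using the open-closed map into symplectic cohomology of $N$. Either way one gets $D^b\mathcal{W}(N)=\langle T_1,\ldots,T_k\rangle$.

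Second, I would match the algebraic data of Seidel's localization with the Floer geometry of the $T_i$. Before wrapping, the endomorphism algebra of $\{T_i\}$ in $\mathcal{W}(N)$ is precisely $\A$: thimbles with $i>j$ can be disjoined in the base, giving directedness, while for $i\le j$ the Floer complex is computed inside a tubular neighborhood of the fibre as $CF^*(V_i,V_j)$. Turning on one unit of wrapping moves $T_j$ by the monodromy at infinity $\mu$, and the continuation map $\mu(T_j)\to T_j$ defines the natural transformation on the symplectic side. Using Seidel's identification of $\B/\A$ with $\A^\vee[-1]$ from \cite{Seidel_FLI}, together with his interpretation of this bimodule as the global monodromy of the fibration, one recognizes the algebraic functor $F=\Phi_{\A^\vee[-2]}$ as a cochain-level model for the monodromy action on thimbles, and the natural transformation $T:F\to\mathrm{Id}$ as the continuation associated to one wrap. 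Since $HW^*(T_i,T_j)$ is by definition the colimit of Floer complexes under a cofinal sequence of wrappings, and $W_{(\A,\B)}$ is by construction the homotopy colimit (telescope) of $\perf(\A)$ obtained by inverting $T$, the two colimits match functorially, yielding isomorphisms of morphism spaces. Combined with the split-generation above this gives the desired equivalence.

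The principal obstacle is coordinating all of this at the chain level rather than just in cohomology. On the Floer side, the wrapped category is usually modelled via a quadratic Hamiltonian or a telescope along a cofinal sequence, and one has to certify that the resulting $A_\infty$-structure on the colimit is quasi-isomorphic to Seidel's algebraic telescope built from the natural transformation $\Phi_{\delta[-1]}$. This requires a careful comparison between two different ways of presenting the bimodule $\B$: the Floer-theoretic one (a full subcategory of $\Fuk(M)$) and the algebraic one (a deformation of $\A\oplus\A^\vee[-1]$ by the class $\delta$). The crucial verification is that the connecting homomorphism $\delta:\B/\A\to\A$ of the short exact sequence $0\to\A\to\B\to\B/\A\to 0$ really is the continuation map implementing one wrap; this is precisely where the Lefschetz-fibration geometry of the global monodromy and the boundary-homomorphism analysis of \cite{Seidel_FLI} feed in, and it is the technical heart of the Abouzaid--Seidel program on wrapped categories of Lefschetz fibrations.
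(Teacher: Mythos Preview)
The paper does not prove this theorem. It is stated as a result due to Abouzaid and Seidel (reference \cite{Abouzaid-Seidel}, described in the text as ``work-in-progress'') and is used as a black box. What follows the statement in the paper is not a proof but a subsection headed \emph{Discussion}, which points the reader to the conjecture on p.~110 of \cite{Seidel_subalgebras}, to the partial account in the Appendix of Abouzaid--Smith \cite{Abouzaid-Smith}, and to Kontsevich's heuristic identifying the Serre functor with the global monodromy. No argument is offered; the author explicitly writes that while \cite{Abouzaid-Seidel} has yet to appear, the statement should not surprise experts.

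Your outline is therefore not comparable to a proof in the paper, because there is none. That said, your sketch is broadly consonant with the heuristics the paper records: split-generation of $\mathcal{W}(N)$ by thimbles, identification of the endomorphism algebra of unwrapped thimbles with $\A$, identification of one wrap with the monodromy/Serre functor and hence with convolution by $\A^\vee[-2]$, and matching the telescope along continuation maps with Seidel's localization $W_{(\A,\B)}$. You also correctly flag the genuine technical content---the chain-level comparison of the Floer telescope with the algebraic telescope, and the identification of the boundary map $\delta$ with the continuation for one wrap---as the heart of the matter, which is precisely what the cited Abouzaid--Seidel work is meant to supply. One caution: the paper itself notes that thimbles are not literally conical with respect to the Liouville form on $N$ (only with respect to a ``purely horizontal'' form), so making them honest objects of $\mathcal{W}(N)$ already requires some care; you pass over this somewhat quickly.
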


\subsubsection*{Discussion}

The reader may wish to compare this statement to the conjecture in \cite[p.~110]{Seidel_subalgebras}. This predicts that the full subcategory of $\mathcal{W}(N)$ on the distinguished collection of thimbles corresponding to the vanishing cycles, say $\Delta_1, \ldots, \Delta_k$, is quasi-isomorphic to the full subcategory of $\mathscr{W}_{(\A, \B)}$ on the pull-backs of the modules $\A e_1, \ldots, \A e_k$, where $\mathscr{W}_{(\A, \B)}$ is a dg enhancement of $W_{(\A, \B)}$ (see \cite{Seidel_subalgebras} for a definition). The $e_i$ are idempotents in a semi-simple ring $R$ defined as before. Notice that the collections $\Delta_1, \ldots, \Delta_k$ and $\A e_1, \ldots, \A e_k$ split-generate $\mathcal{W}(N)$ and $\perf(\A)$, respectively. As such,  while \cite{Abouzaid-Seidel} has yet to appear publicly, we do not expect Theorem \ref{thm:Abouzaid-Seidel} to surprise experts. (Note that the thimbles don't readily give objects of $\mathcal{W}(N)$: they are not conical with respect to the Liouville form on $M$, but rather with respect to some 'purely horizontal' form. Some technical care needs to be taken to address this.)
We also refer the reader to the following existing literature: first, the quotient construction of Fukaya categories described in Lecture 10 of \cite{Seidel_lectures}. Second, a partial account of the results of \cite{Abouzaid-Seidel} can be found in the Appendix of the recent preprint of Abouzaid and Smith \cite{Abouzaid-Smith} -- see in particular Section A4 therein. 
To get a feel for a special case of this theorem, the reader might also wish to consider the discussion of wrapped Fukaya categories in  \cite{AAEKO}.

While we shall not use this in our proofs, the following, due to Kontsevich \cite[p.~30-31]{Kontsevich_ENS}, might help intuition. Recall that convolution with $\B / \A \cong \A^{\vee}[-1]$ induces a functor $F: H^0(\perf(\A)) \to H^0(\perf(\A))$, which, ignoring shifts, is the Serre functor. Geometrically, we have $H^0(\perf(\A)) \cong D^{b} \Fuk^{\to}(\pi)$, and $F$ corresponds to the `total monodromy' functor $\mu$, which replaces a thimble $\Delta$ by its image after a $2\pi$ twist in the base of the Lefschetz fibration. See Figure \ref{fig:Serre}.

\begin{figure}[htb]
\begin{center}
\includegraphics[scale=0.4]{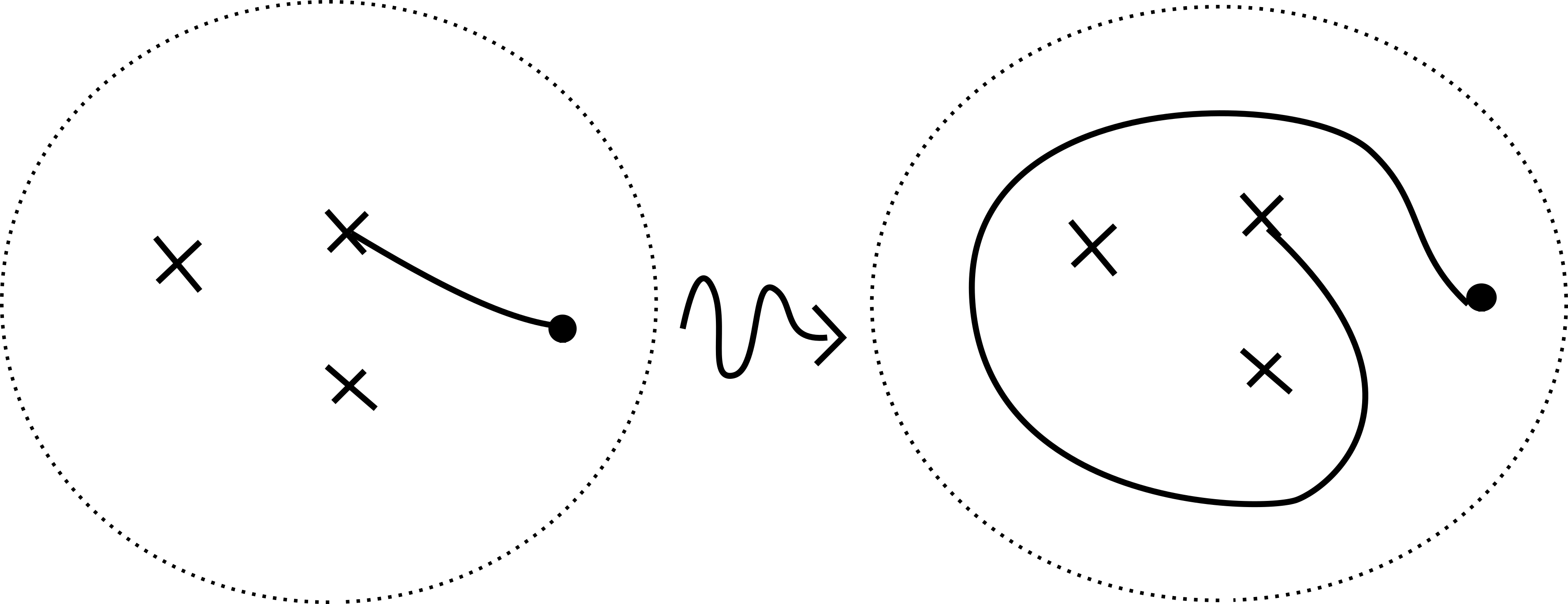}
\caption{Serre functor: vanishing paths corresponding to $\Delta$ (left) and $\mu(\Delta)$ (right), in the basis of some Lefschetz fibration.}
\label{fig:Serre}
\end{center}
\end{figure}

\subsection{Localization using $\twvect(Y_{p,q,r})$ and $\twvect(D)$} 

\begin{lemma} \label{lem:sheaves_on_complement}
The dg category
  $W_{(\A_\tC, \B^+_\tC)}$ is quasi-isomorphic to $D^b \Coh (Y_{p,q,r} \backslash D)$. 
\end{lemma}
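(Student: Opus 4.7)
The plan is to identify $W_{(\A_\tC, \B^+_\tC)}$ with the localization of $D^b \Coh(Y_{p,q,r})$ at the natural transformation $T: -\otimes \O(-D) \to \text{Id}$ induced by multiplication by a defining section $s$ of $\O(D)$, and then to recognize that inverting $s$ yields $D^b\Coh(Y_{p,q,r}\setminus D)$.

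First I would unwind the general localization setup for the pair $(\A_\tC, \B^+_\tC)$. Since $\A_\tC$ is a full subcategory of $\twvect(Y_{p,q,r})$ on a full exceptional collection, $H^0(\perf(\A_\tC)) \cong D^b \Coh(Y_{p,q,r})$. Under this identification, the convolution functor $F = H^0(\Phi_{\A_\tC^\vee[-2]})$ coincides, up to the shift $[-2]$, with the Serre functor on $D^b\Coh(Y_{p,q,r})$. Because $Y_{p,q,r}$ is a smooth projective surface with $D \in |-K_{Y_{p,q,r}}|$, we have $\omega_{Y_{p,q,r}} \cong \O(-D)$, so $F = -\otimes \O(-D)$.

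Next I would show that the abstract $T: F \to \text{Id}$ agrees with the geometric natural transformation $\cdot s: -\otimes\O(-D) \to \text{Id}$. This is built into the design of $\twvect_s(Y_{p,q,r})$: the variable $\varepsilon$ with $\partial\varepsilon = s$ is exactly the cochain-level null-homotopy witnessing $s|_D = 0$. Consequently the naive short exact sequence $0 \to \A_\tC \to \B^+_\tC \to \B^+_\tC / \A_\tC \to 0$, combined with the quasi-isomorphism $\B^+_\tC / \A_\tC \simeq \A_\tC^\vee[-1]$ inherited from the Fukaya side via the bimodule equivalence already established, recovers the map $\O(-D) \to \O$ of multiplication by $s$ as its boundary map $\delta$.

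Finally I would invoke the general principle that on a smooth projective variety $Y$ the formal inversion of multiplication by a section of a line bundle $L$ produces the derived category of coherent sheaves on the open locus where the section is nonzero. Concretely, on $Y_{p,q,r}\setminus D$ the section $s$ is nowhere vanishing, so $s^{-1}$ canonically trivializes $\O(-D)$ there and turns $T$ into an isomorphism; hence the restriction functor $D^b\Coh(Y_{p,q,r}) \to D^b\Coh(Y_{p,q,r}\setminus D)$ factors through the localization. For the comparison in the opposite direction, one observes that the restrictions of the exceptional collection still generate $D^b\Coh(Y_{p,q,r}\setminus D)$, and that morphisms in $D^b\Coh(Y_{p,q,r}\setminus D)$ are colimits over powers of $s$ of morphisms on $Y_{p,q,r}$, which is precisely the algebraic description of $W_{(\A_\tC, \B^+_\tC)}$ produced by the localization construction of \cite[Section 4]{Seidel_subalgebras}.

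The main obstacle is the verification in step two: that the abstract boundary map $\delta$ arising from the $\varepsilon$-enhancement is indeed implemented, on cohomology, by multiplication by $s$. This is a cochain-level calculation internal to $\twvect_s(Y_{p,q,r})$, essentially following Seidel's treatment of the analogous comparison in \cite[Section 6]{Seidel_subalgebras} for the mirror of $\P^2$. Once this compatibility is pinned down, the rest of the identification is largely formal, with the extra blow-ups contributing only bookkeeping.
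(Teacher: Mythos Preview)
Your proposal is correct and follows essentially the same route as the paper's proof: identify $F$ with tensoring by $\mathcal{K}_{Y_{p,q,r}} \cong \O(-D)$ via the Serre functor, identify $T$ with multiplication by the defining section $s$ using the $\varepsilon$-enhancement, and then cite the general fact that localizing $D^b\Coh(Y)$ along multiplication by a section yields $D^b\Coh$ of its nonvanishing locus.

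One small simplification worth noting: the paper sidesteps the cochain-level verification you flag as your main obstacle. Rather than showing directly that $\delta$ is implemented by $\cdot s$, it observes that since $F$ is the Serre functor up to shift, $T$ is \emph{a priori} multiplication by \emph{some} section $t$ of $\mathcal{K}^{-1}_{Y_{p,q,r}}$, and one only needs $t^{-1}(0) = D$. The $\varepsilon$-construction with $\partial\varepsilon = s$ then forces $t = cs$ for some $c \in \C^\ast$, which is all that is required. This avoids tracking the boundary map on the nose and replaces your ``internal cochain-level calculation'' with a one-line remark.
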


\begin{proof} 
Let $V = H^0 \left( \perf(\A_\tC) \right)$ as before. We have a quasi-isomorphism $D^b \Coh(Y_{p,q,r}) \cong V $. 
Consider the line bundle $L = \O_{Y_{p,q,r}}(D)$, with canonical section $s$, so that $D = s^{-1} (0)$. Let $F': D^b \Coh(Y_{p,q,r}) \to D^b \Coh(Y_{p,q,r})$ be the functor given by tensoring with $L^{-1}$, and let $T': F' \to \text{Id}$ be the natural transformation given by multiplying with $s$. 
As explained in  \cite[p.~87]{Seidel_subalgebras}, $D^b \Coh(Y_{p,q,r} \backslash D)$ is equivalent to the category obtained by localizing $V$ along $T'$. 

On the other hand, the functor $F = H^0 \left( \Phi_{\A^\vee_\tC[-2]  }  \right): V \to V$ is the $-2$ shift of the Serre functor; thus, up to a $-2$ shift, $F$ corresponds to tensoring with the canonical bundle $\mathcal{K}_{Y_{p,q,r}}$. 
This implies that the natural transformation $T: F \to \text{Id}$ is given by multiplying with some section of $\mathcal{K}^{-1}_{Y_{p,q,r}}$, say $t$. 
By the first paragraph, $W$ is equivalent to $D^b \Coh \left(Y_{p,q,r} \backslash t^{-1}(0) \right)$. 
On the other hand, recall that we have set things up so that the inclusion $\A_\tC \to \B^+_\tC$ is induced, on the level of morphisms, by inclusions of the form
\bq
\Gamma(U; \mathcal{H}om(E,F)) \subset \Gamma(U; \mathcal{H}om(E,F))[\varepsilon]
\eq
with $\partial \varepsilon = s$. Thus, by construction, $t = cs$ for some constant $c \in \C^\ast$. 
\end{proof}

\subsection{Conclusion} Putting Theorem \ref{thm:Abouzaid-Seidel} and Lemma \ref{lem:sheaves_on_complement} together yields the following.

\begin{theorem}\label{thm:iso3}
There is an equivalence of categories
\bq
D^b \mathcal{W} \left(\mathcal{T}_{p,q,r} \right) \cong D^b \left(Y_{p,q,r} \backslash D \right).
\eq
\end{theorem}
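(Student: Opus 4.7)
The plan is to combine Theorem \ref{thm:Abouzaid-Seidel} with Lemma \ref{lem:sheaves_on_complement} by way of the quasi-isomorphism of bimodule pairs constructed in Section \ref{sec:restrictions}. Concretely, I would first invoke Theorem \ref{thm:Abouzaid-Seidel} applied to the Lefschetz fibration $\Xi: \mathcal{T}_{p,q,r} \to \C$, with distinguished collection of vanishing cycles \eqref{eq:vanishing_cycles}, to identify $W_{(\A_\F, \B^+_\F)} \cong D^b\,\mathcal{W}(\mathcal{T}_{p,q,r})$. Separately, Lemma \ref{lem:sheaves_on_complement} gives $W_{(\A_\tC, \B^+_\tC)} \cong D^b \Coh(Y_{p,q,r}\backslash D)$. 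So the theorem reduces to producing a quasi-equivalence $W_{(\A_\F, \B^+_\F)} \simeq W_{(\A_\tC, \B^+_\tC)}$.

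Next, I would recall from Section \ref{sec:restrictions} that we have constructed a pair of $\Aoo$-quasi-isomorphisms $\psi_\A: \A_\tC \to \A_\F$ and $\psi_\B: \B^+_\tC \to \B^+_\F$ which fit into a strictly commuting diagram with the inclusions $c_{\ve}$ and $c_{\Fuk}$; in particular $\psi_\B$ restricts to $\psi_\A$ on the subalgebra. This exactly says that the pairs $(\A_\F, \B^+_\F)$ and $(\A_\tC, \B^+_\tC)$ are quasi-isomorphic as subalgebra-algebra pairs, equivalently as pairs where $\B^+$ carries the structure of an $\Aoo$-bimodule over $\A$ obtained by restricting its diagonal bimodule.

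The key remaining step is invariance of Seidel's localization construction under such quasi-isomorphisms of pairs. Unwinding Section 4 of \cite{Seidel_subalgebras}, the category $W_{(\A, \B)}$ is built functorially from the $\Aoo$-bimodule $\B/\A$ over $\A$ (equivalently, from the boundary homomorphism $\delta[-1]: (\B/\A)[-1] \to \A$ viewed as a natural transformation of convolution functors on $\perf(\A)$). A quasi-isomorphism of pairs $(\psi_\A, \psi_\B)$ induces a quasi-isomorphism of the short exact sequences $0 \to \A \to \B^+ \to \B^+/\A \to 0$, hence of the quotient bimodules $\B^+/\A$ as modules over the quasi-isomorphic algebras; pulling back along $\psi_\A$ identifies the convolution functors and their natural transformations $T$ up to quasi-isomorphism, so the localizations $V[T^{-1}]$ agree.

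The main obstacle is ensuring that this functoriality holds in the form we need; the argument is essentially formal but requires checking that the output of the construction depends only on the weak equivalence class of the pair, and that $\psi_\A$ and $\psi_\B$ intertwine the boundary maps $\delta$ in the sense required. Once this invariance is established, assembling the three equivalences gives
\begin{equation*}
D^b \,\mathcal{W}(\mathcal{T}_{p,q,r}) \simeq W_{(\A_\F, \B^+_\F)} \simeq W_{(\A_\tC, \B^+_\tC)} \simeq D^b \Coh(Y_{p,q,r}\backslash D),
\end{equation*}
completing the proof.
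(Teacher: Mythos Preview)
Your proposal is correct and matches the paper's approach: the paper's proof is the one-line ``Putting Theorem \ref{thm:Abouzaid-Seidel} and Lemma \ref{lem:sheaves_on_complement} together yields the following,'' implicitly using the equivalence of the bimodule pairs $(\A_\F,\B^+_\F)$ and $(\A_\tC,\B^+_\tC)$ established at the end of Section \ref{sec:restrictions}. You have simply made explicit the intermediate step---invariance of Seidel's localization $W_{(\A,\B)}$ under quasi-isomorphism of the pair---which the paper treats as understood from \cite{Seidel_subalgebras}.
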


\section{Extensions and speculations}

\subsection{Images of some distinguished Lagrangians}
We record the images of certain distinguished compact Lagrangians under the mirror equivalence.

$\T_{p,q,r}$ contains several exact Lagrangian tori; a particularly noteworthy one is given by  restricting $\Xi$ to a contractible open subset of the base only containing the critical values for $\E_1, \E_2$ and $\E_3$, with total space $T^\ast T^2$. There are $(\C^\ast)^2$ choices of flat complex line bundles on the zero-section $T^2$, which induce a family of $(\C^\ast)^2$ objects in $\mathcal{W}(T^\ast T^2) \subset \mathcal{W} (\T_{p,q,r})$. Under homological mirror symmetry for $T^\ast T^2$, these objects correspond to skyscraper sheaves of the points of $(\C^\ast)^2 = \P^2 \backslash D_{\P^2}$, where $D_{\P^2}$ is the toric divisor on $\P^2$ \cite{Abouzaid_cotangent}. Thus, under the isomorphism of Theorem \ref{thm:iso3}, these objects correspond to skyscraper sheaves of the points of the strict transform of this $(\C^\ast)^2$ patch.

Let us also consider the vanishing cycles for $T_{p,q,r}$.
This discussion turns out to be more natural if we  re-order our distinguished collection of vanishing cycles: perform trivial mutations to get the ordered collection:
\bq
\cP_1' = \cP_p, \cP_1' = \cP_{p-1},\ldots, \cQ_1'= \cQ_q, \ldots,  \cQ_r' = \cQ_1, \cR'_1 =\cR_r,  \cR_r'=\cR_1, \E_1, \E_2, \E_3
\eq
There is a natural identification of the resulting directed Fukaya category, say $(\Fuk^{ \to})' (\Xi)$, with the previously studied $\Fuk^{\to}(\Xi)$ (with order-reversing shufflings among the critical points of types $\cP$, $\cQ$ and $\cR$), and one could have established the various equivalences of categories using it instead. 
 Each of the vanishing cycles of Figure \ref{fig:Dynkin_cycles} gives an object of $\mathcal{W}(\T_{p,q,r})$ (to be precise, we choose gradings so that all of the intersection points between them have grading zero). 
From their descriptions as matching cycles in $\Xi$, one can calculate that under Theorem \ref{thm:Abouzaid-Seidel}, each of the vanishing cycles for $T_{p,q,r}$ is the localization of a cone on two of the generators for $(\Fuk^{\to})'(\Xi)$; these correspond to two thimbles which glue together to form the matching cycles -- c.f.~\cite[Section 18]{Seidel_book}.
For instance, for $P_1$, take the thimbles $\cP'_p$ and $\cP'_{p-1}$. In $D^b \mathcal{W}(\T_{p,q,r})$, $P_1$ can be shown to be quasi-isomorphic to the image under localization of
\bq
\xymatrix{
\{  \cP'_{p-1} \ar[r]_{e^{\cP}_{p-1,p}} & \cP'_{p}
  \}.
}
\eq
A calculation shows that this gets mapped to $i_\ast \O_{\widetilde{E}_{\cP,p}}$ under Theorem \ref{thm:iso3}; more generally, $P_j$ corresponds to $i_\ast \O_{\widetilde{E}_{\cP, p-j+1}}$, $j=2, \ldots, p-1$, and similarly for the $Q_k$ and $R_l$. Moreover, $A$  corresponds to $i_\ast \O_{\widetilde{H}}$, and $B$ to $i_\ast (\O_{\widetilde{H}}(-1) )$.

\subsection{Restricting to compact Lagrangians}
Let $\Fuk(\mathcal{T}_{p,q,r})$ denote the Fukaya category of $\mathcal{T}_{p,q,r}$, defined as in \cite[Section 12]{Seidel_book}. Its objects are twisted complexes of Lagrangian branes, where we restrict ourselves to closed exact Lagrangian submanifolds. There is a natural full inclusion of categories $D^b \Fuk(\T_{p,q,r}) \subset D^b \mathcal{W}(\T_{p,q,r})$.  Let $D^b \Coh_{\text{cpt}}(Y_{p,q,r} \backslash D)$ denote the full subcategory of $D^b \Coh (Y_{p,q,r} \backslash D)$ with objects complexes of vector bundles which have (as complexes) compact support. We expect there to be a full inclusion $D^b \Fuk(\T_{p,q,r}) \subseteqq D^b \Coh_{\text{cpt}} (Y_{p,q,r} \backslash D)$, such that the following diagram commutes:
\begin{eqnarray}
 D^b \mathcal{W}(\T_{p,q,r}) & \cong & D^b (Y_{p,q,r} \backslash D) \\
\rotatebox{90}{$\subset$}\qquad  & & \qquad \rotatebox{90}{$\subset$} \\
D^b \Fuk(\T_{p,q,r}) & \subseteqq  & D^b \Coh_{\text{cpt}} (Y_{p,q,r} \backslash D)
\end{eqnarray}

\subsection{Restricting to cores}
Let $\Fuk_o(\mathcal{T}_{p,q,r})$ denote the full subcategory of $\Fuk(\mathcal{T}_{p,q,r})$ split-generated by vanishing cycles, and $D^b \Coh_o(Y_{p,q,r} \backslash D)$ denote the derived category of complexes of coherent sheaves with support (as complexes) on the triangular configuration of $-2$ curves in $Y_{p,q,r} \backslash D$. We expect the previous inclusion to specialize to
\bq
D^b \Fuk_o(\T_{p,q,r}) \subseteqq D^b \Coh_o (Y_{p,q,r} \backslash D) 
\eq
with the same compatibilities as before. As motivation for the notation, note that in the case of the three simple elliptic singularities, as discussed in the introduction, the variety $Y_{p,q,r} \backslash D$ is a rational elliptic surface with an $I_d$ fibre removed, where $d=3,2,1$, for, respectively, $(p,q,r)= (3,3,3), (4,4,2)$ and $(6,3,2)$. Moreover, there is the proper elliptic fibration
\bq
q: Y_{p,q,r} \backslash D \to \C
\eq
such that $q^{-1}(0)$ is precisely the triangular configuration of $-2$ curves.

\subsection{Singular affine structures}
Starting with $\Xi$, we can get a description of $\T_{p,q,r}$ as the result of attaching $p+q+r$ Weinstein handles to $D^\ast T^2$, the cotangent disc bundle over $T^2$: one handle for each of the critical points for the $\cP_i$, $\cQ_j$ and $\cR_k$. More precisely, we are attaching $p$, $q$ and $r$ handles to Legendrian lifts to $\partial D^\ast T^2$ of three curves on $T^2$. By e.g.~drawing $\cP_1$, $\cQ_1$ and $\cR_1$ onto Figure \ref{fig:MVCMcycles} and performing exact isotopies (first to arrange for each $L_i$ to agree almost everywhere with one of the other two, and second to displace $\cP_1$, $\cQ_1$ and $\cR_1$ onto the resulting skeleton), we see that each of these three curves in $T^2$ must intersect the other two once. By symmetry considerations, they can be chosen to be, for instance, curves of slopes $0, 1$ and $\infty$. 

This fits with expectations from the Gross--Siebert program, which suggests that $\mathcal{T}_{p,q,r}$ should admit a singular affine structure, with Lagrangian torus fibres and $p+q+r$ singular fibres: $p$ aligned vertically, where, with respect to the central fibre, a longitude has been pinched; $q$ aligned horizontally, where a meridian has been pinched; and $r$ aligned diagonally, where a curve of slope one has been pinched. 
(Dually, this should correspond to a singular affine structure on $Y_{p,q,r} \backslash D$ given by starting with the standard one on $(\C^\ast)^2$ and iteratively modifying it by Symington cuts \cite{Symington}.)
On a related note, using this presentation the spaces $\mathcal{T}_{p,q,r}$ fit into the framework of the work-in-progress \cite{STW}; we expect the cluster structure constructed therein to agree with the one of \cite{GHK_birational}.

\bibliography{bib}{}
\bibliographystyle{plain}
\end{document}